\newtheorem{thr}{Theorem} 	
\newtheorem{lem}{Lemma}  
\newtheorem{prop}{Proposition} 	
\newtheorem{cor}{Corollary}
\newtheorem{asup}{Assumption}  
\newtheorem{define}{Definition}  
\newtheorem{rmk}{Remark} 
\DeclareMathOperator*{\argmin}{argmin}
\icmltitlerunning{Efficient Algorithm for Solving Constrained Lasso}
\begin{document}

\twocolumn[
\icmltitle{An Efficient Augmented Lagrangian Based Method for Constrained Lasso}



\icmlsetsymbol{equal}{*}

\begin{icmlauthorlist}
\icmlauthor{Zengde Deng}{ed}
\icmlauthor{Anthony Man-Cho So}{ed}
\end{icmlauthorlist}

\icmlaffiliation{ed}{Department of Systems Engineering and Engineering Management, the Chinese University of Hong Kong}

\icmlcorrespondingauthor{Zengde Deng}{zddeng@se.cuhk.edu.hk}

\icmlkeywords{Machine Learning, ICML}

\vskip 0.3in
]



\printAffiliationsAndNotice{}  


\begin{abstract}
	Variable selection is one of the most important tasks in statistics and machine learning. 
	To incorporate more  prior information about the regression coefficients, the constrained Lasso model has been proposed in the literature. 
	In this paper, we present an inexact augmented Lagrangian method to solve the Lasso problem with linear equality constraints. By fully exploiting second-order sparsity of the problem, we are able to greatly reduce the computational cost and obtain highly efficient implementations.
	Furthermore, numerical results on both synthetic data and real data show that our algorithm is superior to existing first-order methods in terms of both running time and solution accuracy.
\end{abstract}

\section{Introduction}
With the advent of big data era, variable selection has received great attention in statistics and machine learning since contemporary applications often involve a large number of variables.
There exist a host of methods to address this problem, such as Lasso \cite{tibshirani1996regression}, SCAD \cite{fan2001variable}, elastic net \cite{zou2005regularization}, adaptive Lasso \cite{zou2006adaptive}, relaxed Lasso \cite{meinshausen2007relaxed} and so on.
Benefiting from the simple formulation and the powerful modeling concerning  the variable selection task, Lasso has been extensively applied in various instances \cite{burnham2003model,candes2008introduction,chen2001atomic}. 

In spite of the overwhelming success of Lasso, it still suffers from the limited information induced by $l_1$ norm. To circumvent these issues, researchers have proposed the constrained Lasso model \cite{gaines2018algorithms,james2013penalized} to incorporate more prior information. In the view of above discussions, our goal in this paper is to propose an efficient algorithm to tackle the following constrained Lasso problem
\begin{align} \label{1.1}
\begin{split}
\min_{x}\ &\dfrac{1}{2}\|Ax-b\|^2+\lambda\|x\|_{1} \\
\text{s.t.}\ &Bx=d,
\end{split}
\end{align}
where $b\in\mathbb{R}^m$ is the response vector, $A\in\mathbb{R}^{m\times n}$ is the design matrix of covariates, $x\in\mathbb{R}^{n}$ is the regression estimator, and $B\in\mathbb{R}^{s\times n},d\in\mathbb{R}^{s}$ are given constraints. 

An important example which falls into the constrained Lasso problem is Lasso with sum-to-zero constraints, i.e. $e^{T}x=0$. This type of constraint has been adopted in microbiome data regression \cite{shi2016regression} and variable selection \cite{lin2014variable} where the covariates come from compositional data. 

Another example which is widely used in statistics is the generalized Lasso problem 
\begin{align}\label{1.3}
\min_{x} \frac{1}{2}\|Ax-b\|^2+\|Dx\|_1,
\end{align}
where $D\in\mathbb{R}^{p\times n}$. When $\text{rank}(D)=p$ and $p\leq n$, Tibshirani \yrcite{tibshirani2011solution} has derived that (\ref{1.3}) can be transformed into a Lasso problem. In fact, (\ref{1.3}) is a special case of constrained Lasso with $d=0$  \cite{gaines2018algorithms,james2013penalized} when $p\geq n$ and $D$ has full column rank $n$ and we elaborate on this in section \ref{sec6}.

\subparagraph{Our contributions}
\indent 
In this paper, we propose a semismooth Newton augmented Lagrangian method to solve the constrained Lasso problem. To fully exploit the sparsity structure, we mainly focus on the dual formulation of our problem and an inexact augmented Lagrangian method is proposed.
The main challenge lies in how to solve the subproblem of augmented Lagrangian method efficiently. 
To overcome this difficulty, we apply the semismooth Newton method to resolve the inner subproblem. 
As we will show in numerical experiments, we only need about tens or dozens of outer iterations and about ten iterations to solve each subproblem to the desired accuracy and the total running time is small. The key insights behind this impressive performance are three-fold: (a) Regarding the outer loop, we have superlinear convergence to achieve highly accurate solution; (b) Besides, we also attain superlinear convergence in the inner subproblem solver and hence the total iteration number of our algorithm is still small; (c) For each iteration in solving the inner subproblem, we extensively exploit the second-order sparsity of the problem to greatly reduce the computational cost. In summary, not only can we prove the theoretical effectiveness of our algorithm, but also provide highly efficient implementations by exploring the hidden structure of our problem.


\section{Related work}
\indent The constrained Lasso problem can be seen as a special case of the composite optimization problem with linear equality constraints. Without linear constraints, the proximal gradient method can achieve $\mathcal{O}(1/k)$ convergence rate and accelerated proximal gradient \cite{beck2009fast,nesterov2013gradient} obtain $\mathcal{O}(1/k^2)$ rate which is optimal for first-order methods to solve composite convex optimization problems. Although the methods proposed in Beck et al.\yrcite{beck2009fast}, Nesterov \yrcite{nesterov2013gradient} can also handle constrained problems, they require projections onto the constraint set, which can be computational prohibitive. Fortunately, there are other choices to tackle structured linear constrained problems. One of the most popular methods is the augmented Lagrangian method (ALM) \cite{bertsekas2014constrained}. Due to the difficulty of solving the subproblem, linearization technique is utilized in Yang et al. \cite{yang2013linearized}, which results in linearized ALM (LALM). The recent work Xu \yrcite{xu2017accelerated} proposes an accelerated linearized ALM to solve linearly constrained composite convex problem. Besides, alternating direction method of multipliers (ADMM) \cite{boyd2011distributed} is applied to solve the linear constrained problem and Goldstein et al. \yrcite{goldstein2014fast} propose a faster version of ADMM (A-ADMM). Recently, first-order primal-dual methods \cite{chambolle2011first} are also utilized to handle this kind of problems. Unfortunately, the acceleration rates of above algorithms all need the objective function to be strongly convex which is not satisfied by our problem.

\indent More recently, semismooth Newton augmented Lagrangian method is attracting more and more attention due to its fast convergence and good experimental performance. This kind of method has been used to tackle problems such as SDP \cite{zhao2010newton}, quadratic SDP \cite{li2015qsdpnal}, Lasso \cite{li2018highly} and fused Lasso \cite{li2018efficiently}, convex clustering \cite{pmlr-v80-yuan18a} and so on. Inspired by this, we propose a new ALM based framework to cope with constrained Lasso problem in this paper.

\section{Problem formulation and an augmented Lagrangian method} \label{sec3}
In this section, we propose an augmented Lagrangian metthod to solve problem (\ref{1.1}). Before that, we introduce some properties of our problem. 
\subsection{Dual problem and optimality conditions}
In this subsection we first consider the general case of our problem (\ref{P}) and derive the dual problem (\ref{D}). Moreover, we give the optimality conditions associated with (\ref{D}) which will play an important role in the convergence analysis in section \ref{sec5}.

Our constrained Lasso problem (\ref{1.3}) can be seen as a special case of following problem:
\begin{equation}\label{P}
\begin{split}
\min_{x}\ &\{f(x)=h(Ax)+p(x)\} \\
\text{s.t.}\ &Bx=d,
\end{split} \tag{P}
\end{equation}
where $A\in\mathbb{R}^{m\times n}, x\in\mathbb{R}^n, B\in\mathbb{R}^{s\times n},d\in\mathbb{R}^{s}$, $h:\mathcal{U}\rightarrow(-\infty,+\infty]$ is continuous differentiable on $\text{dom}(h)$ and $p:\mathcal{X}\rightarrow(-\infty,+\infty]$ is a closed proper convex function. The dual problem of (\ref{P}) can be written as
\begin{equation}\label{D}
\begin{split}
\min_{u,v,w}\ &\{g(z):= h^{*}(u)-\langle v,d\rangle+p^{*}(w)\} \\
\text{s.t.}\ &A^{T}u-B^{T}v+w=0,
\end{split}\tag{D}
\end{equation}
where $h^{*}$ and $p^{*}$ are conjugate functions of $h$ and $g$ respectively and we denote $z:=(u,v,w)\in\mathcal{Z}:=\mathcal{U}\times\mathcal{V}\times\mathcal{W}$ with $u\in\mathbb{R}^{m},v\in\mathbb{R}^{s}$ and $w\in\mathbb{R}^{n}$. In our constrained Lasso model, we have  $h(x)=\frac{1}{2}\|x-b\|^2$ and $p(x)=\lambda\|x\|_1$. Therefore $h^{*}(u)=\frac{1}{2}\|u\|^2+b^{T}u$ and $p^{*}(w)=\mathbb{I}_{\{\|w\|_{\infty}\leq\lambda\}}$ where $\mathbb{I}$ is the indicator function. 

Denote the Lagrangian function associated with (\ref{D}) by $l$ for $\forall(z,x)\in\mathcal{Z}\times\mathcal{X}$, i.e.,
\begin{align}
\begin{split}
\ &l(z;x)\equiv l(u,v,w;x)\\
:=\ &h^{*}(u)-\langle v,d\rangle+p^{*}(w)-\langle x,A^{T}u-B^{T}v+w\rangle.
\end{split}
\end{align}
For the convex-concave function $l$, we further define 
\begin{align}
\begin{split}
\ &\psi(z):=\sup_{x\in\mathcal{X}}l(z;x),\quad\forall z\in\mathcal{Z}, \\ \ &\phi(x):=\inf_{z\in\mathcal{Z}}l(z;x),\quad\forall x\in\mathcal{X}.
\end{split}
\end{align}
Moreover, we define the following mappings corresponding to $l,\psi$ and $\phi$:
\begin{align*}
\ &\mathcal{T}_{l}(z,x):=\{(\hat{z},\hat{x})\in\mathcal{Z}\times\mathcal{X}|(\hat{z},-\hat{x})\in\partial l(z,x)\}, \\
\ &\mathcal{T}_{\psi}(z):=\partial\psi(z),\ \forall z\in\mathcal{Z},\quad\mathcal{T}_{\phi}(x):=\partial\phi(x),\ \forall x\in\mathcal{X}.
\end{align*}
The KKT systems associated with (D) is given by
\begin{align}\label{2.6}
\begin{split}
\begin{cases}
0\in\partial h^{*}(u)-Ax, \\
0\in\partial p^{*}(w)-x, \\
0 = Bx-d, \\
0= A^Tu-B^{T}v+w,
\end{cases} \quad(u,v,w;x)\in\mathcal{Z}\times\mathcal{X}.
\end{split}
\end{align}
Suppose that the KKT system (\ref{2.6}) has at least one solution. Let $\bar{z}:=(\bar{u},\bar{v},\bar{w})$ be an optimal solution to (\ref{D}). Denote $\bar{x}\in\mathcal{M}_{\psi}(\bar{z})$, where $\mathcal{M}_{\psi}(\bar{z})$ is the set of Lagrangian multipliers associated with $\bar{z}$, then $(\bar{z},\bar{x})$ is a solution of KKT system (\ref{2.6}). By the property of conjugate function, we see that $(\bar{x},\bar{v})$ solves the following KKT system: 
\begin{equation}
\resizebox{.9\hsize}{!}{$
	\begin{aligned}
	\begin{cases}
	0\in A^{T}\nabla h(Ax)-B^{T}v+\partial p(x), \\
	0=Bx-d,
	\end{cases} \ (x,v)\in\mathcal{X}\times\mathcal{V}.
	\end{aligned}$}
\end{equation} 
Conversely, if we set $\bar{u}=\nabla h(A\bar{x})$ and $\bar{w}=-A^T\bar{u}+B^T\bar{v}$, then $(\bar{u},\bar{v},\bar{w},\bar{x})$ solves the KKT system (\ref{2.6}). From Rockafellar \yrcite{rockafellar2015convex}, we know that $(\bar{z},\bar{x})$ solves KKT system (\ref{2.6}) if and only if $\bar{x}$ solves (\ref{P}) and $\bar{z}$ solves (\ref{D}).

\subsection{An augmented Lagrangian method} \label{sec3.2}
In this section, we introduce the inexact augmented Lagrangian method to solve (\ref{D}). The augmented Lagrangian function associated with (\ref{D}) is given by
\begin{align*}
\mathcal{L}_{\sigma}(u,v,w;x)=l(u,v,w;x)+\dfrac{\sigma}{2}\|A^{T}u-B^{T}v+w\|^2.
\end{align*}
We propose the following inexact augmented Lagrangian method for (\ref{D}).
\begin{algorithm} 
	\caption{Inexact ALM for (D)} 
	\label{alg1} 
	\begin{algorithmic}[1] 
		\STATE {\bfseries Input:} $u^{0},v^{0},w^{0},x^{0}$.
		\FOR{$k=0,1,\dots$}
		\STATE Get an approximate solution 
		\begin{equation}\label{3.1}
		\resizebox{.77\hsize}{!}{$
			\begin{aligned}	
			z^{k+1}\ &=(u^{k+1},v^{k+1},w^{k+1}) \\
			\ &\approx\argmin_{z:=(u,v,w)}\left\{\Theta_{k}(z):=\mathcal{L}_{\sigma_{k}}(u,v,w;x^{k})\right\}
			\end{aligned}$}
		\end{equation}
		\STATE Update $x$ by $x^{k+1}=x^{k}-\sigma_{k}(A^{T}u^{k+1}-B^{T}v^{k+1}+w^{k+1})$ and update $\sigma_{k+1}\uparrow\sigma_{\infty}\leq\infty$.
		\ENDFOR		
	\end{algorithmic} 
\end{algorithm}

To obtain an efficient implementation of Algorithm \ref{alg1}, we need to solve the subproblem (\ref{3.1}) inexactly. From Rockafellar's work \yrcite{rockafellar1976augmented}, we use one of the following stopping criteria: 
\begin{align*}
\ &\tag{A}\quad\Theta_{k}(z^{k+1})-\inf\Theta_{k}\leq\varepsilon_{k}^{2}/2\sigma_{k},\label{A} \\
\ &\tag{B}\quad\Theta_{k}(z^{k+1})-\inf\Theta_{k}\leq\zeta_k^2\|x^{k+1}-x^k\|^2/2\sigma_{k}, \label{B}
\end{align*}
where $\sum_{k=0}^{\infty}\varepsilon_{k}<\infty$ and  $\sum_{k=0}^{\infty}\zeta_k<\infty$. 

\section{Inexact semismooth Newton method to solve ALM subproblem}
In section \ref{sec3.2}, we design an inexact ALM to solve (\ref{D}), but the main challenge lies in how to solve the subproblem (\ref{3.1}) efficiently. In this section we propose a semismooth Newton method to solve ALM subproblem (\ref{3.1}) and provide highly efficient implementations by exploiting second-order sparsity in the subproblem.
 
Here, we define the proximal mapping $\text{Prox}_p(\cdot) $ associated with $p$ as
\begin{align*}
\text{Prox}_{p}(x):=\argmin_{u}\left\{p(u)+\dfrac{1}{2}\|u-x\|^2\right\}.
\end{align*} 
Moreover, by the Moreau decomposition, we have $x = \text{Prox}_{tp}(x)+t\text{Prox}_{t^{-1}p^*}(x/t)$ for $t>0$.

Note that the augmented Lagrangian can be expressed as 
\begin{align*}
\ &\mathcal{L}_{\sigma}(u,v,w;x)\\
=\ &h^{*}(u)-\langle v,d\rangle+p^{*}(w)-\langle x,A^{T}u-B^{T}v+w\rangle\\
\ &+\dfrac{\sigma}{2}\|A^{T}u-B^{T}v+w\|^2.
\end{align*}
For a fixed $x$ and given $\sigma$, let $y=(u,v)\in\mathbb{R}^{m+s}$, we consider 
\begin{align}
\min_{y,w}\Theta(y,w)=\mathcal{L}_{\sigma}(y,w;x),
\end{align}
for convenience we set $\bar{h}^{*}(y)=h^{*}(u)-\langle v,d\rangle$.
Then we denote $\theta(y)$ by
\begin{align*}
\theta(y)= \ &\inf_{w}\mathcal{L}_{\sigma}(y,w;x) \\
=\ &\bar{h}^{*}(y)+p^{*}\left(\text{Prox}_{p^{*}/\sigma}(x/\sigma-\bar{A}^{T}y)\right)\\
\ &+\dfrac{1}{2\sigma}\left\|\text{Prox}_{\sigma p}(x-\sigma(\bar{A}^{T}y)\right\|^2-\dfrac{1}{2\sigma}\|x\|^2,
\end{align*}
where $\bar{A}=\begin{bmatrix}
A \\ -B
\end{bmatrix}$, and we can also write $\theta(y)$ as $\theta(u,v)$. Hence, if we let $(\tilde{y},\tilde{w})=\argmin\Theta(y,w)$, then  $(\tilde{y},\tilde{w})$ can be computed in the following manner:
\begin{align} \label{4.2}
\begin{split}
\begin{cases}
\tilde{y}=\argmin \theta(y), \\
\tilde{w}=\text{Prox}_{p^{*}/\sigma}(x/\sigma-\bar{A}^{T}\bar{y}).
\end{cases}
\end{split}
\end{align}
Note that $\theta(\cdot)$ is a continuous differentiable function with
\begin{align*}
\nabla\theta(y)=\begin{bmatrix}
\nabla h^{*}(u)-A\text{Prox}_{\sigma p}(x-\sigma(\bar{A}^{T}y)) \\
-d+B\text{Prox}_{\sigma p}(x-\sigma(\bar{A}^{T}y))
\end{bmatrix}.
\end{align*}
Moreover,  (\ref{4.2}) is equivalent to the following:
\begin{align}\label{4.3}
\nabla\theta(y)=0,\qquad \forall y\in\text{dom}(y).
\end{align}
For any $y\in\text{dom}(y)$, we define
\begin{align*}
\hat{\partial}^2\theta(y):=\ &\begin{bmatrix}
\nabla^{2}h^{*}(u) &  \\
&  \textbf{0}
\end{bmatrix}+\sigma\bar{A}\partial\text{Prox}_{\sigma p}(x-\sigma(\bar{A}^{T}y))\bar{A}^{T}\\
= \ &H+\sigma\bar{A}\partial\text{Prox}_{\sigma p}(x-\sigma(\bar{A}^{T}y))\bar{A}^{T},
\end{align*}
where $\partial\text{Prox}_{\sigma p}(x-\sigma(\bar{A}^{T}y))$ is the Clarke subdifferential \cite{clarke1990optimization} of  $\text{Prox}_{\sigma p}(\cdot)$ at $x-\sigma(\bar{A}^{T}y)$. From Hiriart-Urruty et al., \yrcite{hiriart1984generalized}, we know that
\begin{equation}\label{4.4}
\resizebox{.87\hsize}{!}{$
	\hat{\partial}^2\theta(y)(d_u,d_v)=\partial^2\theta(y)(d_u,d_v),\ \forall (d_u,d_v)\in\text{dom}(y)$},
\end{equation}
where $\partial^2\theta(y)$ denotes the generalized Hessian of $\theta(\cdot)$ at $y$. 
Define 
\begin{align}\label{4.5}
V:= H+\sigma \bar{A}Q\bar{A}^{T}
\end{align}
with $Q\in\partial\text{Prox}_{\sigma p}(x-\sigma(\bar{A}^{T}y))$,
then we have $V\in\hat{\partial}^2\theta(y)$.  Since both $H$ and $\bar{A}Q\bar{A}^{T}$ are positive semidefinite, $V$ is positive semidefinite. Now, let us introduce the notion of semismoothness \cite{mifflin1977semismooth,sun2002semismooth}. 
\begin{define}(Semismoothness)
	Suppose that $F:\mathcal{X}\rightarrow\mathcal{Y}$ is a locally Lipschitz continuous function. $F$ is said to be semismooth at $x\in\mathcal{X}$ if $F$ is directionally differentiable at $x$ and for any $V\in\partial F(x+\Delta x)$ and $\Delta x\rightarrow 0$,
	\begin{align*}
	F(x+\Delta x)-F(x)-V(\Delta x)=o(\|\Delta x\|).
	\end{align*}
	$F$ is said to be strongly semismooth  at $x\in\mathcal{X}$ if 
	\begin{align*}
	F(x+\Delta x)-F(x)-V(\Delta x)=O(\|\Delta x\|^2).
	\end{align*}
	$F$ is said to be a semismooth (strongly semismooth) function on $\mathcal{X}$ if it is semismooth (strongly semismooth) everywhere in $\mathcal{X}$.
\end{define}
Note that all twice continuous differentiable functions and piecewise linear functions are strongly semismooth everywhere \cite{li2018highly}. In our constrained Lasso problem, $h^{*}(\cdot)$ is twice continuous differentiable and $\text{Prox}_{\lambda\|x\|_1}(\cdot)$ is piecewise linear. Hence, they are all strongly semismooth.

Now, we are ready to give an efficient inexact semismooth Newton (SSN) method to solve equation (\ref{4.2}) in Algorithm \ref{alg2}. 

\begin{algorithm} [tb] 
	\caption{Semismooth Newton (SSN) for subproblem} 
	\label{alg2} 
	\begin{algorithmic}[1] 
		\STATE {\bfseries Input:} Given $\mu\in(0,1/2)$, $\bar{\eta}\in(0,1)$,  $\tau\in(0,1],\tau_1,\tau_2\in(0,1)$ and $\delta\in(0,1)$. Choose $y^{0}=(u^{0},v^{0})$.
		\FOR{$j=0,1,\dots$}
		\STATE Choose $Q^{j}\in\partial\text{Prox}_{\sigma p}(x-\sigma(\bar{A}^{T}y^j))$. Let $V_{j}$ be given as in (\ref{4.5}) and $\epsilon_{j}=\tau_1\min\{\tau_2,\|\nabla\theta(y^{j})\|\}$. Solve the following linear system
		\begin{align}\label{4.6}
		V_j(d_u,d_v)+\epsilon_{j}(0,d_v)=-\nabla\theta(y^j)
		\end{align}
		exactly or by CG algorithm to find an approximate solution such that
		\begin{align*}
		\ &\|V_j(d_u^j,d_v^j)+\epsilon_{j}(0,d_v^j)+\nabla\theta(y^j)\|\\
		\leq\ &\min(\bar{\eta},\|\nabla\theta(y^{j}\|^{1+\tau}).
		\end{align*}
		\STATE (Line search) Set $\alpha_j=\delta^{l_{j}}$, where $l_{j}$ is the first nonnegative integer $l$ for which 
		\begin{align*}
		\ &\theta(u^{j}+\delta^{l}d_u^{j},v^{j}+\delta^{l}d_v^{j})\\
		\leq\ &\theta(u^{j},v^j)+\mu\delta^{l}\langle\nabla\theta(y^j),(d_u^j,d_v^j)\rangle.
		\end{align*}
		\STATE Set $u^{j+1}=u^j+\alpha_jd_u^j$ and $v^{j+1}=v^j+\alpha_jd_v^j$
		\ENDFOR
	\end{algorithmic} 
\end{algorithm}


\subsection{Efficient implementation of SSN}
As mentioned before, the key step to determine the efficiency of the whole algorithmic framework is how to solve (\ref{3.1}) quickly. 
We use the semismooth Newton method to tackle (\ref{3.1}) and the main computational cost lies in (\ref{4.6}), which computes the inexact Newton direction. 
Thus we will give efficient implementations to compute (\ref{4.6}).

\indent Recall the definition of $V$ in (\ref{4.5}), we write (\ref{4.6}) as
\begin{align}\label{4.7}
(H_{\epsilon}+\sigma\bar{A}Q\bar{A}^{T})d_y=-\nabla\theta(y)
\end{align}
with $H_{\epsilon}$ is defined by
$
H_{\epsilon}:=\begin{bmatrix}
\textbf{I}_{m} & \\
& \epsilon\textbf{I}_s
\end{bmatrix} $, where $\textbf{I}_m$ denote the $m\times m$ identity matrix. Since $H_{\epsilon}$ is positive definite, our linear system is well defined. 

\indent Before solving (\ref{4.7}), we do Cholesky decomposition on $H_\epsilon$ via $H_\epsilon=LL^{T}$ with $L$ being a lower triangular matrix. Consequently, we rewrite (\ref{4.7}) as 
\begin{equation}\label{4.8}
\resizebox{.85\hsize}{!}{$
	(\textbf{I}_{m+s}+\sigma(L^{-1}\bar{A})Q(L^{-1}\bar{A})^T)(Ld_y)=-L^{-1}\nabla\theta(y)$}.
\end{equation} 
Fortunately, for our constrained Lasso problem we have $h^{*}(u)=\frac{1}{2}\|u\|^2+b^{T}u$. Then $\nabla h^{*}(u)=u+b$, $\nabla^2h^{*}(u)=\textbf{I}_{m}$ and it is easy to check that $L$ can be directly computed as $L=L^{T}=\begin{bmatrix}
\textbf{I}_{m} & \\
& \sqrt{\epsilon}\textbf{I}_s
\end{bmatrix}$ and $L^{-1}=\begin{bmatrix}
\textbf{I}_{m} & \\
& \sqrt{1/\epsilon}\textbf{I}_s
\end{bmatrix}$. Thus, the cost of computing Cholesky decomposition of $H_{\epsilon}$ is negligible. For notation convenience, we can simplify (\ref{4.8}) as 
\begin{align}
(\textit{I}_{m+s}+\sigma\hat{A}Q\hat{A}^{T})\hat{d}_y=-\nabla\hat{\theta}(y),
\end{align}
where $\hat{A}=L^{-1}\bar{A}=[A;-\sqrt{1/\epsilon}B], \hat{d}_y=L^{-1}d_y$ and $\nabla\hat{\theta}(y)=L^{-1}\nabla\theta(y)$. Note that the cost of computing both $\bar{A}Q\bar{A}$ and $\hat{A}Q\hat{A}^{T}$ are $\mathcal{O}((m+s)^2n)$. Therefore, the matrix multiplication can be computational prohibitive when $n$ is large.
Fortunately, we can overcome this difficulty by exploiting the sparsity structure of our problem in the following manners.

\indent For the subdifferential of proximal mapping, from Li et al., \yrcite{li2018highly} we can always choose $Q\in \partial\text{Prox}_{\sigma\lambda\|x\|_{1}}(x)$ to be $Q=\text{diag}(q)$, a diagonal matrix whose $i$-th element is given by 
\begin{align}\label{4.10}
q_{i}=\begin{cases}
1,\qquad\text{if}\quad|x_{i}|>\sigma\lambda, \\
0,\qquad\text{otherwise}.
\end{cases}
\end{align}
Set $\mathcal{J}=\{j:|x_{j}|>\sigma\lambda\}$ with cardinality $|\mathcal{J}|=r$. By utilizing the diagonal structure of $Q$, we can write
\begin{align}
\bar{A}Q\bar{A}^{T} = \bar{A}_{\mathcal{J}}\bar{A}_{\mathcal{J}}^{T},\quad\hat{A}Q\hat{A}^{T}=\hat{A}_{\mathcal{J}}\hat{A}_{\mathcal{J}}^T,
\end{align}
where $\bar{A}_{\mathcal{J}}\in\mathbb{R}^{(m+s)\times r}$ is the submatrix of $\bar{A}$ with those columns contained in $\mathcal{J}$ preserved and the same for $\hat{A}_{\mathcal{J}}$.

Now we analyze the reduction of computational cost by exploring the second-order sparsity of the problem. By utilizing (\ref{4.10}), we can reduce the cost of computing $\bar{A}Q\bar{A}$ and $\hat{A}Q\hat{A}^{T}$ from $\mathcal{O}((m+s)^2n)$ to $\mathcal{O}((m+s)^2r)$. Due to the sparsity-inducing property of $p(x)=\lambda\|x\|_1$, $r$ is usually much smaller than $n$, we greatly reduce the computational cost. Consequently, the total computational cost of solving (\ref{4.7}) reduces from $\mathcal{O}((m+s)^2(m+s+n))$ to $\mathcal{O}((m+s)^2(m+s+r))$, meaning that the computational cost has no relationship with $n$. Thus, even for large dimension $n$, we can tackle the linear system (\ref{4.7}) by Cholesky factorization. In fact, when $r\ll m+s$, we can also directly invert the matrix using the Sherman-Morrison-Woodbury formula \cite{golub2012matrix} instead of using Cholesky factorization:
\begin{align*}
\ &(\textbf{I}_{m+s}+\sigma\hat{A}Q\hat{A}^{T})^{-1}=(\textbf{I}_{m+s}+\sigma\hat{A}_{\mathcal{J}}\hat{A}^{T}_{\mathcal{J}})^{-1}\\
=\ &\textbf{I}_{m+s}-\hat{A}_{\mathcal{J}}(\sigma^{-1}\textbf{I}_r+\hat{A}_{\mathcal{J}}^{T}\hat{A}_{\mathcal{J}})^{-1}\hat{A}_{\mathcal{J}}^{T}.
\end{align*}
As a result, the total computational cost to solve the Newton linear system can be reduced from $\mathcal{O}((m+s)^2(m+s+r))$ to
$\mathcal{O}(r^2(m+s+r))$. Although this drastic computational reduction seems exciting, in practice we have to determine when to solve the linear system by Cholesky factorization and when to compute the inverse directly in the aforementioned way. We balance these two choices by judging whether $r\leq \frac{1}{2}(m+s)$ holds.
Note that whichever way we choose to solve (\ref{4.7}), the computational cost only depends on $m+s$. Thus, when $m+s$ is not too large (smaller than $10^4$), we can always solve (\ref{4.7}) exactly by Cholesky factorization or by computing the inverse. Otherwise, we can choose CG to solve the Newton linear system inexactly.  

\section{Convergence analysis}\label{sec5}
In this section, we establish the convergence of both the augmented Lagrangian method and the semismooth method under some mild assumptions.
\subsection{Convergence of the augmented Lagrangian method}
We first introduce some definitions that are key to our analysis.

\indent Let $\mathcal{X}$ and $\mathcal{Y}$ be finite-dimensional Euclidean spaces. We say that $F:\mathcal{X}\rightrightarrows\mathcal{Y}$ is a multi-valued mapping if $F$ maps each element $x\in \mathcal{X}$ to a subset $F(x) \subseteq \mathcal{Y}$. Then, the graph of $F$ is defined as 
\begin{align*}
\text{gph}(F):=\{(x,y)\in\mathcal{X}\times\mathcal{Y}|y\in F(x)\},
\end{align*}
and the inverse mapping of $F$ is denoted by $F^{-1}$. The following definition is from Chapter 3 in \cite{dontchev2009implicit}.
\begin{define}
	A multi-valued mapping $F:\mathcal{X}\rightrightarrows\mathcal{Y}$ is said to be metrically subregular at $\bar{x}\in\mathcal{X}$ for $\bar{y}\in\mathcal{Y}$ with modulus $\kappa\geq0$, where $(\bar{x},\bar{y})\in\text{gph}(F)$, if there exist neighborhoods $\mathcal{E}_1$ of $\bar{x}$ and $\mathcal{E}_2$ of $\bar{y}$ such that
	\begin{align*}
	\text{dist}(x,F^{-1}(\bar{y}))\leq\kappa\text{dist}(\bar{y},F(x)\cap\mathcal{E}_2),\qquad\forall x\in\mathcal{E}_1.
	\end{align*}
\end{define}
The following result shows that for the constrained Lasso problem, the map $\mathcal{T}_\phi$ is metrically subregular at $\bar{x}$ for $0$. The proof can be found in the supplementary material.
\begin{thr} \label{thr1}
	Assume that $\mathcal{T}^{-1}_l(0)$ is non-empty and there exists $(\bar{u},\bar{v},\bar{w})\in\mathcal{T}_\psi^{-1}(0)$. For $h(\cdot)$ and $p(\cdot)$ chosen as in the constrained Lasso problem, the map $\mathcal{T}_\phi$ is metrically subregular at $\bar{x}$ for $0$..
\end{thr}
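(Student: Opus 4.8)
The plan is to identify $\phi$ with the (negative of the) essential primal objective and then to exploit the fact that its subdifferential is a polyhedral set-valued map. Since $l(z;x)$ is affine in $x$, computing $\phi(x)=\inf_{z}l(z;x)$ splits across $u$, $v$, $w$: the infimum over $u$ gives $-h^{**}(Ax)=-h(Ax)$, the infimum over $w$ gives $-p^{**}(x)=-p(x)$, and the infimum over $v$ of $\langle v,Bx-d\rangle$ equals $0$ if $Bx=d$ and $-\infty$ otherwise. Writing $C:=\{x:Bx=d\}$, this yields $\phi=-F$ with
\begin{align*}
F(x):=\tfrac12\|Ax-b\|^2+\lambda\|x\|_1+\mathbb{I}_C(x).
\end{align*}
Because $\mathcal{T}_\phi$ and $\partial F$ differ only by a sign in the range variable, they share the same preimage of $0$ and the same value of $\text{dist}(0,\cdot)$, so it suffices to prove that $\partial F$ is metrically subregular at $\bar x$ for $0$. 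The standing assumption that $\mathcal{T}_l^{-1}(0)\neq\emptyset$ guarantees a KKT point, hence $\bar x$ is an optimal solution of (\ref{P}) and $\bar x\in(\partial F)^{-1}(0)\neq\emptyset$.

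Next I would show that $\partial F$ is a polyhedral multifunction, i.e. $\text{gph}(\partial F)$ is a finite union of polyhedra. By the sum rule (the quadratic term is smooth),
\begin{align*}
\partial F(x)=A^T(Ax-b)+\lambda\,\partial\|\cdot\|_1(x)+N_C(x).
\end{align*}
Here $\partial\|\cdot\|_1$ is a separable product of the scalar maps $t\mapsto\operatorname{sign}(t)$ for $t\neq0$ and $0\mapsto[-1,1]$, whose graph is a finite union of polyhedra; and $N_C(x)$ equals $\operatorname{range}(B^T)$ for $x\in C$ and is empty otherwise, so its graph $C\times\operatorname{range}(B^T)$ is a polyhedron. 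Adding the single-valued affine map $x\mapsto A^T(Ax-b)$ and summing these two maps preserves the finite-union-of-polyhedra structure, since each operation is the image of a polyhedron (in a suitable product space) under a linear map. Thus $\partial F$, and therefore $(\partial F)^{-1}$, is polyhedral.

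Finally I would invoke Robinson's theorem that a polyhedral multifunction is locally upper Lipschitz (equivalently, calm) at every point of its graph \cite{dontchev2009implicit}. Applied to $(\partial F)^{-1}$ at $0$, it furnishes constants $\kappa,\varepsilon>0$ with $\text{dist}(x,(\partial F)^{-1}(0))\le\kappa\|y\|$ whenever $x\in(\partial F)^{-1}(y)$ and $\|y\|\le\varepsilon$. For $x$ near $\bar x$ I would take $y$ to be the projection of $0$ onto the closed convex set $\partial F(x)$, so that $\|y\|=\text{dist}(0,\partial F(x))$, and conclude $\text{dist}(x,(\partial F)^{-1}(0))\le\kappa\,\text{dist}(0,\partial F(x))$ on the regime $\text{dist}(0,\partial F(x))\le\varepsilon$; the complementary regime is absorbed by shrinking the neighborhood of $\bar x$ and enlarging $\kappa$, which is routine. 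This is exactly metric subregularity of $\partial F$, hence of $\mathcal{T}_\phi$, at $\bar x$ for $0$. I expect the polyhedrality step to be the crux: one must check carefully that the quadratic term contributes only an affine shift in the range variable and so does not destroy the polyhedral graph inherited from $\|\cdot\|_1$ and the affine constraint, while keeping the domains in the sum rule consistent.
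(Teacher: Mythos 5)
Your argument is correct, but it follows a genuinely different route from the paper's. You identify $\phi$ with $-F$, where $F(x)=\tfrac12\|Ax-b\|^2+\lambda\|x\|_1+\mathbb{I}_{\{Bx=d\}}(x)$, observe that $F$ is convex piecewise linear--quadratic so that $\mathrm{gph}(\partial F)$ is a finite union of polyhedra (the smooth quadratic contributes only the affine shift $x\mapsto A^T(Ax-b)$ in the range variable), and then invoke Robinson's theorem on outer Lipschitz continuity of polyhedral multifunctions to obtain calmness of $(\partial F)^{-1}$ at $(0,\bar{x})$, which is precisely metric subregularity of $\mathcal{T}_\phi$ at $\bar{x}$ for $0$. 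The paper instead passes through the equivalence between metric subregularity of $\mathcal{T}_\phi$ and a second-order growth condition for $f$ at $\bar{x}$ (its Proposition 1), represents the solution set as $\mathcal{D}_1\cap\mathcal{D}_2\cap\mathcal{D}_3$ using the invariance of $Ax$ over $\mathcal{T}_\phi^{-1}(0)$, bounds $\text{dist}(x,\mathcal{T}_\phi^{-1}(0))$ via bounded linear regularity, Hoffman's error bound and its Lemma 2, and then assembles the growth estimate from the strong convexity of $h$ and the metric subregularity of $\partial\|\cdot\|_1$ (its Proposition 4). Your route is shorter and self-contained for this specific problem because every ingredient of $F$ is linear--quadratic or polyhedral; its only delicate points are the ones you already flag, namely the validity of the subdifferential sum rule (which holds here since two of the three summands have full domain) and the routine passage from calmness at $(0,\bar{x})$ to the paper's definition with the localization $\mathcal{E}_2$. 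The paper's route is more modular: it isolates exactly which properties of $h$ and $p$ are used (strong convexity of $h$, metric subregularity of $\partial p$, polyhedrality of the constraint), and therefore would extend to regularizers such as the nuclear norm, for which $\partial p$ is not a polyhedral multifunction and Robinson's theorem is unavailable. Both arguments use the stated hypotheses only to guarantee that $\bar{x}$ is a well-defined optimal/KKT point at which to localize.
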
 
Recall the stopping criteria (\ref{A}) and (\ref{B}).
Following Rockafellar \yrcite{rockafellar1976augmented}, we now establish the global convergence and local linear convergence of our inexact ALM algorithm under the metric subregularity of $\mathcal{T}_\phi$.
\begin{thr} \label{thr2}
	Assume that the solution set $\mathcal{T}^{-1}_{\phi}(0)$ to (\ref{P}) is nonempty. Let  $\{(z^{k},x^{k})\}$ be the sequence generated by Algorithm 1 with stopping criterion $(A)$. Then the sequence $\{x^{k}\}$ is bounded and converges to some $x^{\infty}\in\mathcal{T}^{-1}_{\phi}(0)$, and the sequence $\{z^{k}\}$ satisfies the following for all $k\geq0,z^{k}\in\mathcal{Z}$:
	\begin{align*}
	\ &\|A^{T}u^{k+1}-B^{T}v^{k+1}+w^{k+1}\|=\|x^{k+1}-x^{k}\|/\sigma_{k}\rightarrow 0, \\
	\ &g(z^{k+1})-\inf g \\ \leq\ &\Theta_k(z^{k+1})-\inf\Theta_k+(\|x^{k}\|^2-\|x^{k+1}\|^2)/2\sigma_{k},
	\end{align*}
	where $g(z)$ is the dual objective value defined in (\ref{D}) and $\Theta_{k}$ is defined in (\ref{3.1}). Moreover, if (\ref{D}) has a non-empty and bounded solution set, then the sequence $\{z^{k}\}$ is bounded and converges to an optimal solution to (\ref{D}). 
\end{thr}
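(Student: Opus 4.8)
The plan is to recognize Algorithm~\ref{alg1} as an inexact proximal point algorithm applied to the maximal monotone operator $\mathcal{T}_\phi=\partial\phi$ and then to invoke the convergence theory of Rockafellar \yrcite{rockafellar1976augmented}. A direct computation of the inner infima gives $\phi(x)=-h(Ax)-p(x)$ when $Bx=d$ and $\phi(x)=-\infty$ otherwise, so $-\phi$ is exactly the objective of (\ref{P}) together with the feasibility indicator; hence $\mathcal{T}_\phi$ is maximal monotone and $\mathcal{T}_\phi^{-1}(0)$ is precisely the (assumed nonempty) solution set of (\ref{P}). The multiplier update in Algorithm~\ref{alg1} yields, by definition, $A^Tu^{k+1}-B^Tv^{k+1}+w^{k+1}=(x^k-x^{k+1})/\sigma_k$, which is the first displayed identity; writing $R(z):=A^Tu-B^Tv+w$ and noting that an exact minimizer of $\Theta_k$ satisfies $x^{k+1}=x^k-\sigma_k R(z^{k+1})=(I+\sigma_k\mathcal{T}_\phi)^{-1}(x^k)$, I would show the inexact scheme is a perturbed proximal step in the multiplier $x$.

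The key technical step is to translate the computable rule (\ref{A}) into Rockafellar's abstract accuracy criterion $\|x^{k+1}-\hat x^{k+1}\|\le\varepsilon_k$, where $\hat x^{k+1}:=(I+\sigma_k\mathcal{T}_\phi)^{-1}(x^k)$ is the exact proximal point. Because $R$ is linear and $g$ convex, $\Theta_k$ is strongly convex relative to the residual: using $0\in\partial\Theta_k(z^*)$ at the exact minimizer $z^*$ one obtains $\Theta_k(z)-\inf\Theta_k\ge\frac{\sigma_k}{2}\|R(z)-R(z^*)\|^2=\frac{1}{2\sigma_k}\|x^{k+1}-\hat x^{k+1}\|^2$. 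Substituting $z=z^{k+1}$ and invoking (\ref{A}) gives $\|x^{k+1}-\hat x^{k+1}\|\le\varepsilon_k$ with $\sum_k\varepsilon_k<\infty$, which is exactly the hypothesis of Rockafellar's inexact proximal point theorem. That theorem then yields boundedness of $\{x^k\}$ and convergence to some $x^\infty\in\mathcal{T}_\phi^{-1}(0)$, establishing the first conclusion; since $x^k\to x^\infty$ and $\sigma_k\ge\sigma_0>0$, the identity $\|R(z^{k+1})\|=\|x^{k+1}-x^k\|/\sigma_k\to0$ follows at once.

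For the dual gap I would argue directly. Substituting $R(z^{k+1})=(x^k-x^{k+1})/\sigma_k$ into $\Theta_k(z^{k+1})=g(z^{k+1})-\langle x^k,R(z^{k+1})\rangle+\frac{\sigma_k}{2}\|R(z^{k+1})\|^2$ and simplifying gives the exact relation $g(z^{k+1})=\Theta_k(z^{k+1})+(\|x^k\|^2-\|x^{k+1}\|^2)/2\sigma_k$. Since $\Theta_k$ coincides with $g$ on the feasible set $\{R(z)=0\}$ of (\ref{D}), we have $\inf\Theta_k\le\inf g$, and combining the two facts yields the claimed inequality $g(z^{k+1})-\inf g\le\Theta_k(z^{k+1})-\inf\Theta_k+(\|x^k\|^2-\|x^{k+1}\|^2)/2\sigma_k$.

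The \textbf{main obstacle} is the last assertion: the boundedness and convergence of the \emph{primal} ALM sequence $\{z^k\}$, which is not controlled by the proximal convergence of $\{x^k\}$. Here I would use the gap inequality together with (\ref{A}): since $\Theta_k(z^{k+1})-\inf\Theta_k\le\varepsilon_k^2/2\sigma_k\to0$ and $\|x^k\|^2-\|x^{k+1}\|^2\to0$ (as $\{x^k\}$ converges and $\sigma_k$ is bounded below), we get $\limsup_k[g(z^{k+1})-\inf g]\le0$, while $R(z^{k+1})\to0$ forces every limit point of $\{z^k\}$ to be feasible for (\ref{D}). A recession-cone argument then shows that, when the solution set of (\ref{D}) is nonempty and bounded, the upper level sets of $g$ restricted to an approximate feasibility region are bounded, so no recession direction of the feasible set leaves $g$ nonincreasing; this rules out an unbounded subsequence and gives boundedness of $\{z^k\}$. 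Finally, lower semicontinuity of $g$ and the vanishing gap force every limit point to be optimal for (\ref{D}); upgrading subsequential convergence to convergence of the full sequence is the delicate point, and I would complete it by appealing to the corresponding statement for the ALM primal iterates under a bounded dual solution set in Rockafellar \yrcite{rockafellar1976augmented}.
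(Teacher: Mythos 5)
Your proposal is correct and takes essentially the same route as the paper: the paper offers no standalone proof of this theorem, deferring entirely to Rockafellar (1976) via the identification of the inexact ALM on the dual with the inexact proximal point algorithm applied to $\mathcal{T}_{\phi}$, which is precisely the reduction you carry out (criterion (A) implying $\|x^{k+1}-\hat{x}^{k+1}\|\leq\varepsilon_{k}$ through the strong-convexity-in-$R(z)$ estimate, the algebraic identity for $g(z^{k+1})$, and the level-set/recession argument for boundedness of $\{z^{k}\}$). The details you supply, including the explicit computation of $\phi$ and the translation between the two accuracy criteria, are consistent with the cited results and simply fill in what the paper leaves implicit.
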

\begin{thr} \label{thr3}
	Suppose that $\mathcal{T}_\phi$ is metrically subregular at $x^\infty$ for $0$ with modulus $\kappa_\phi$. Suppose further that the sequence $\{z^{k},x^{k}\}$ is generated under the criterion $(B)$. Then, for all sufficiently large $k$, we have
	\begin{align*}
	\ &\text{dist}(x^{k+1},\mathcal{T}_{\phi}^{-1}(0))\leq\rho_{k}\text{dist}(x^{k},\mathcal{T}_{\phi}^{-1}(0)), \\
	\ &\|A^{T}u^{k+1}-B^{T}v^{k+1}+w^{k+1}\|\leq\varsigma_{k}\text{dist}(x^{k},\mathcal{T}_{\phi}^{-1}(0)), \\
	\ &g(z^{k+1})-\inf g\leq\varsigma'_{k}\text{dist}(x^{k},\mathcal{T}_{\phi}^{-1}(0)),
	\end{align*}
	where 
	\begin{align*}
	\ &1>\rho_{k}=(\kappa_{\phi}/\sqrt{\kappa_{\phi}^2+\sigma_{k}^2}+2\zeta_k)(1-\zeta_k^{-1})\\ \  &\rightarrow\rho_{\infty}=\kappa_{\phi}/\sqrt{\kappa_{\phi}^2+\sigma_{\infty}^2}\quad(\rho_{\infty}=0\text{ if }\sigma_{\infty}=\infty), \\
	\ &\varsigma_k = \sigma_{k}^{-1}(1-\zeta_k)^{-1}\rightarrow\varsigma_{\infty}=1/\sigma_{\infty}\quad(\varsigma_{\infty}=0\text{ if }\sigma_{\infty}=\infty), \\
	\ &\varsigma_k '=\varsigma_k(\zeta_{k}^2\|x^{k+1}-x^{k}\|+\|x^{k+1}\|+\|x^{k}\|)/2\\ \ &\rightarrow\varsigma_{\infty}'=\|x^{\infty}\|/\sigma_{\infty}\quad(\varsigma_{\infty}'=0\text{ if }\sigma_{\infty}=\infty),
	\end{align*}  
	and $\zeta_k$ comes from stopping criterion (\ref{B}) and $g(z)$ is the dual objective value defined  in (\ref{D}).
\end{thr}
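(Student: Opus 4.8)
The plan is to recast Algorithm~\ref{alg1} as an inexact proximal point algorithm (PPA) and then convert Rockafellar's generic PPA convergence into a \emph{linear} rate using the metric subregularity furnished by Theorem~\ref{thr1}. Since $\phi$ is concave, the operator $\mathcal{A}:=-\mathcal{T}_\phi$ is maximal monotone and shares both its zero set $\Omega:=\mathcal{T}_\phi^{-1}(0)$ and its residual magnitudes with $\mathcal{T}_\phi$, so metric subregularity of $\mathcal{T}_\phi$ at $x^\infty$ for $0$ with modulus $\kappa_\phi$ is the same property for $\mathcal{A}$. Following Rockafellar \yrcite{rockafellar1976augmented}, the multiplier update $x^{k+1}=x^k-\sigma_k(A^Tu^{k+1}-B^Tv^{k+1}+w^{k+1})$ is an approximate evaluation of the resolvent $P_k:=(I+\sigma_k\mathcal{A})^{-1}$ at $x^k$, and the subproblem stopping criterion (\ref{B}) translates into the proximal-approximation bound $\|x^{k+1}-\hat x^{k+1}\|\le\zeta_k\|x^{k+1}-x^k\|$, where $\hat x^{k+1}:=P_k(x^k)$ is the exact proximal point. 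First I would record this equivalence and note that, by Theorem~\ref{thr2}, $\{x^k\}$ converges to some $x^\infty\in\Omega$; hence for all sufficiently large $k$ the iterates lie in the neighborhood on which subregularity at $x^\infty$ is valid, which is what justifies the qualifier ``for all sufficiently large $k$''.

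The technical core is the contraction of the \emph{exact} step. Writing $t_k:=\text{dist}(x^k,\Omega)$, the resolvent identity gives $v^{k+1}:=(x^k-\hat x^{k+1})/\sigma_k\in\mathcal{A}(\hat x^{k+1})$. Let $\pi$ be the projection of $x^k$ onto $\Omega$, so $0\in\mathcal{A}(\pi)$; monotonicity of $\mathcal{A}$ yields $\langle x^k-\hat x^{k+1},\hat x^{k+1}-\pi\rangle\ge0$, and expanding $\|x^k-\pi\|^2$ gives the Pythagorean bound $\|x^k-\hat x^{k+1}\|^2+\text{dist}(\hat x^{k+1},\Omega)^2\le t_k^2$. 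Metric subregularity applied to $v^{k+1}$ gives $\text{dist}(\hat x^{k+1},\Omega)\le\kappa_\phi\|v^{k+1}\|=(\kappa_\phi/\sigma_k)\|x^k-\hat x^{k+1}\|$. Eliminating $\|x^k-\hat x^{k+1}\|$ between the two relations yields the key contraction
\[
\text{dist}(\hat x^{k+1},\Omega)\le\mu_k\,t_k,\qquad\mu_k:=\frac{\kappa_\phi}{\sqrt{\kappa_\phi^2+\sigma_k^2}},
\]
which is the factor appearing in $\rho_\infty$.

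To obtain $\rho_k$ I would fold in the inexactness. The same Pythagorean bound gives $\|x^k-\hat x^{k+1}\|\le t_k$, whence $\|x^{k+1}-x^k\|\le\|x^{k+1}-\hat x^{k+1}\|+\|\hat x^{k+1}-x^k\|\le\zeta_k\|x^{k+1}-x^k\|+t_k$, i.e. $\|x^{k+1}-x^k\|\le(1-\zeta_k)^{-1}t_k$. Combining this with $t_{k+1}\le\text{dist}(\hat x^{k+1},\Omega)+\|x^{k+1}-\hat x^{k+1}\|\le\mu_k t_k+\zeta_k\|x^{k+1}-x^k\|$ and bounding the numerator $\mu_k(1-\zeta_k)+\zeta_k\le\mu_k+2\zeta_k$ gives $t_{k+1}\le(\mu_k+2\zeta_k)(1-\zeta_k)^{-1}t_k=\rho_k t_k$, the first assertion. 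The feasibility estimate is then immediate from the update rule, since $\|A^Tu^{k+1}-B^Tv^{k+1}+w^{k+1}\|=\|x^{k+1}-x^k\|/\sigma_k\le\sigma_k^{-1}(1-\zeta_k)^{-1}t_k=\varsigma_k t_k$. For the dual-gap estimate I would start from the inequality $g(z^{k+1})-\inf g\le\Theta_k(z^{k+1})-\inf\Theta_k+(\|x^k\|^2-\|x^{k+1}\|^2)/2\sigma_k$ of Theorem~\ref{thr2}, insert criterion (\ref{B}) to bound $\Theta_k(z^{k+1})-\inf\Theta_k\le\zeta_k^2\|x^{k+1}-x^k\|^2/2\sigma_k$, use $\|x^k\|^2-\|x^{k+1}\|^2\le\|x^{k+1}-x^k\|(\|x^k\|+\|x^{k+1}\|)$, factor out $\|x^{k+1}-x^k\|/2\sigma_k$, and apply $\|x^{k+1}-x^k\|/\sigma_k\le\varsigma_k t_k$ to reach $\varsigma_k'$. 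The stated limits follow as $\zeta_k\to0$, $\sigma_k\uparrow\sigma_\infty$, $x^k\to x^\infty$ and $\|x^{k+1}-x^k\|\to0$.

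The main obstacle is the reduction in the first paragraph: rigorously justifying that the dual ALM iteration is exactly the inexact PPA for $\mathcal{A}$ and that criterion (\ref{B}) implies $\|x^{k+1}-\hat x^{k+1}\|\le\zeta_k\|x^{k+1}-x^k\|$. This rests on relating the subproblem value $\inf_z\Theta_k$ to the Moreau envelope of the essential primal objective and on the strong convexity of that envelope, which converts the primal suboptimality gap controlled by (\ref{B}) into a squared distance from $x^{k+1}$ to the exact proximal point $\hat x^{k+1}$. Extra care is needed because the essential objective is nonsmooth and $\Omega$ may fail to be a singleton, which is precisely why distances to $\Omega$, rather than $\|\cdot-x^\infty\|$, are carried throughout; everything else is the monotonicity and firm-nonexpansiveness bookkeeping of the preceding paragraphs.
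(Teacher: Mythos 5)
Your proposal is correct and follows essentially the same route the paper intends: the paper gives no self-contained proof of this theorem but defers to Rockafellar's inexact proximal point analysis of the ALM, which is exactly the argument you reconstruct (criterion (\ref{B}) $\Rightarrow$ $\|x^{k+1}-\hat{x}^{k+1}\|\leq\zeta_k\|x^{k+1}-x^k\|$, the firm-nonexpansiveness/Pythagorean bound, and metric subregularity in place of Lipschitz invertibility to get the contraction factor $\kappa_{\phi}/\sqrt{\kappa_{\phi}^2+\sigma_k^2}$). You also correctly read the factor $(1-\zeta_k^{-1})$ in the statement as the intended $(1-\zeta_k)^{-1}$, and your derivations of $\varsigma_k$ and $\varsigma_k'$ match the stated constants.
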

\begin{rmk}
	Under the assumptions in Theorem \ref{thr2}, we establish the global convergence of the sequence $\{(x^{k},z^{k})\}$ corresponding to the primal and dual problems. Theorem \ref{thr1} shows that $\mathcal{T}_{\phi}$ is metrically subregular for the constrained Lasso problem. Thus, it satisfies the assumptions in Theorem \ref{thr3}. Moreover, Theorem \ref{thr3} shows that the KKT residuals corresponding to (\ref{P}) and (\ref{D}) converge superlinearly.
\end{rmk}

\subsection{Convergence of the semismooth Newton method}
We first establish the convergence of Algorithm \ref{alg2} under the following mild assumption:
\begin{asup} \label{asup1}
	The matrix $B$ has full row rank: $\text{rank}(B)=s$.
\end{asup} 
Using Assumption \ref{asup1} and following the proof of Theorem 3.4 of \cite{zhao2010newton}, we obtain the following result.
\begin{thr} \label{thr4}
	Suppose the Assumption \ref{asup1} holds. Then Algorithm \ref{alg2} is well defined and any accumulation point $(\hat{u},\hat{v})$ is an optimal solution to problem (\ref{4.3}).
\end{thr}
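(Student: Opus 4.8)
The plan is to follow the global-convergence template for semismooth Newton methods with line search (as in the proof of Theorem 3.4 of Zhao et al.\ \yrcite{zhao2010newton}), specialized to the reduced function $\theta$. Recall from the excerpt that $\theta$ is convex and continuously differentiable, that (\ref{4.3}) (the stationarity equation $\nabla\theta(y)=0$) is equivalent to solving the subproblem $\min_y\theta(y)$, and that every generalized Hessian element $V=H+\sigma\bar{A}Q\bar{A}^{T}$ is positive semidefinite. The argument breaks into establishing (i) that each Newton direction is well defined and is a descent direction, (ii) that the line search terminates, (iii) that the iterates remain bounded, and (iv) that any accumulation point is stationary, hence optimal.

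First I would address well-definedness. Whenever $\nabla\theta(y^j)\neq 0$ we have $\epsilon_j=\tau_1\min\{\tau_2,\|\nabla\theta(y^j)\|\}>0$, so the coefficient matrix of (\ref{4.6}), namely $V_j+\epsilon_j\,\text{diag}(\textbf{0}_m,\textbf{I}_s)$, is the sum of the positive semidefinite $V_j$ and a term that is strictly positive in the $v$-block. Indeed, $d^{T}(V_j+\epsilon_j\,\text{diag}(\textbf{0}_m,\textbf{I}_s))d=\|d_u\|^2+\sigma\|Q^{1/2}\bar{A}^{T}d\|^2+\epsilon_j\|d_v\|^2$, which is strictly positive for $d=(d_u,d_v)\neq 0$. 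Hence (\ref{4.6}) has a unique solution, and the inexact CG step with the stated residual tolerance yields a direction $d^j$ with $\langle\nabla\theta(y^j),d^j\rangle<0$; thus $d^j$ is a descent direction and, by continuous differentiability of $\theta$, the Armijo backtracking in Step 4 terminates after finitely many trials. This proves that Algorithm \ref{alg2} is well defined.

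Next I would use Assumption \ref{asup1} to guarantee that accumulation points exist, the point being that $\theta$ is coercive exactly when $B$ has full row rank. The $u$-block is controlled by the quadratic $\tfrac12\|u\|^2$ coming from $h^{*}$, while the only term in $\theta$ that is not bounded below in $v$ is the linear $-\langle v,d\rangle$. When $\text{rank}(B)=s$ the map $B^{T}$ is injective, so $\|\bar{A}^{T}y\|=\|A^{T}u-B^{T}v\|\to\infty$ whenever $\|v\|\to\infty$ with $u$ bounded, and the soft-thresholding term $\tfrac{1}{2\sigma}\|\text{Prox}_{\sigma p}(x-\sigma\bar{A}^{T}y)\|^2$ then grows quadratically and dominates the linear term; conversely, dropping full row rank lets one move along $\ker(B^{T})$ and drive $\theta$ to $-\infty$. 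Coercivity, together with the monotone decrease $\theta(y^{j+1})\le\theta(y^j)$ enforced by the Armijo rule, confines $\{y^j\}$ to the bounded level set $\{y:\theta(y)\le\theta(y^0)\}$, so accumulation points exist and $\theta$ is bounded below along the sequence.

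Finally, for optimality of accumulation points I would run the standard descent argument: since $\{\theta(y^j)\}$ is nonincreasing and bounded below, the Armijo inequality forces $\alpha_j\langle\nabla\theta(y^j),d^j\rangle\to 0$, and combining this with uniform spectral bounds on $V_j$ over the bounded level set and the inexactness condition on $d^j$ yields $\nabla\theta(y^j)\to 0$. By continuity of $\nabla\theta$, any accumulation point $(\hat{u},\hat{v})$ satisfies $\nabla\theta(\hat{u},\hat{v})=0$; since $\theta$ is convex, this is equivalent by (\ref{4.3}) to $(\hat{u},\hat{v})$ being a global minimizer of $\theta$, i.e.\ an optimal solution of (\ref{4.3}). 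The main obstacle is this last deduction that $\nabla\theta(y^j)\to 0$: because $\epsilon_j\to 0$ as the gradient vanishes, the smallest eigenvalue of the regularized Hessian degrades, so one must track carefully the interplay between $\epsilon_j$, the spectral bounds of $V_j$, and the CG residual to certify a sufficient-decrease estimate of the form $\langle\nabla\theta(y^j),d^j\rangle\le -c\,\|\nabla\theta(y^j)\|^2$ on the bounded level set. This is precisely the quantitative heart of the argument imported from \cite{zhao2010newton}, and verifying that its hypotheses hold here is where Assumption \ref{asup1} and the explicit positive-semidefinite-plus-regularization structure of $V_j$ are used.
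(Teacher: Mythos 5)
Your proposal matches the paper's approach: the paper gives no standalone proof of this theorem but simply invokes the proof of Theorem 3.4 of Zhao, Sun and Toh (2010), and your outline is exactly the standard descent-direction/Armijo/stationarity argument from that reference, with Assumption 1 used to secure positive definiteness of the regularized Newton system and level-boundedness of $\theta$. One parenthetical remark is imprecise --- when the primal problem is feasible, $d\in\mathrm{range}(B)$ and $\theta$ is merely constant (not unbounded below) along $\ker(B^{T})$ --- but that converse observation plays no role in the actual argument.
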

Next, we introduce the notion of \emph{constraint nondegeneracy}. 
\begin{define}
	Let $z\in\mathbb{R}^{n}$ and $\hat{Q}=\text{diag}(\hat{q})\in\partial\text{Prox}_{\sigma\lambda\|x\|_1}(z)$ with $\hat{q}$ defined in (\ref{4.10}). We say that the \textbf{constraint nondegeneracy condition} holds at $z$ if
	\begin{align}
	\text{lin}(B\hat{Q})=\mathbb{R}^s
	\end{align} 
	holds at $z$ where $\text{lin}(B\hat{Q})$ denotes the linear space of $B\hat{Q}$.
\end{define}
Now we can establish the superlinear convergence of Algorithm \ref{alg2}. The proof of the following result can be found in the supplementary material.
\begin{thr} \label{thr5}
	Let $\{\hat{u},\hat{v}\}$ be an accumulation point of the sequence $\{(u^{j},v^{j})\}$ generated by Algorithm \ref{alg2}. Suppose that the constraint nondegeneracy condition holds at $\hat{z}=x-\sigma(A^T\hat{u}-B^{T}\hat{v})$. Then, the sequence  $\{(u^j,v^j)\}$ converges to $\{(\hat{u},\hat{v})\}$ and 
	\begin{align*}
	\|(u^{j+1},v^{j+1})-(\hat{u},\hat{v})\|=\mathcal{O}(	\|(u^{j},v^{j})-(\hat{u},\hat{v})\|^{1+\tau}).
	\end{align*}
\end{thr}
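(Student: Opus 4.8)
The plan is to follow the standard local convergence theory for inexact semismooth Newton methods (as developed in \cite{zhao2010newton}), adapted to the regularized and inexactly solved system (\ref{4.6}). The argument rests on three ingredients: strong semismoothness of $\nabla\theta$, nonsingularity of every element of the generalized Hessian $\hat{\partial}^2\theta(\hat{y})$ at the accumulation point, and the uniform boundedness of the inverses of the regularized Newton matrices in a neighborhood of $\hat{y}$. Granting these, the error recursion for the (eventually unit-step) inexact Newton iteration yields the claimed order-$(1+\tau)$ rate.

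First I would record that $\nabla\theta$ is strongly semismooth. Indeed, $\nabla h^{*}(u)=u+b$ is affine and $\text{Prox}_{\sigma\lambda\|\cdot\|_1}(\cdot)$ is piecewise linear, hence strongly semismooth; since $\nabla\theta$ is obtained from these by composition with the affine map $y\mapsto x-\sigma\bar{A}^{T}y$ and linear combination, it inherits strong semismoothness. By Theorem \ref{thr4} the accumulation point $\hat{y}=(\hat{u},\hat{v})$ is optimal for (\ref{4.3}), so $\nabla\theta(\hat{y})=0$.

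The heart of the proof---and the main obstacle---is showing that each $V\in\hat{\partial}^2\theta(\hat{y})$ is nonsingular. This is where the constraint nondegeneracy condition enters, and it is delicate precisely because the block $H=\text{diag}(\textbf{I}_m,\textbf{0})$ is only positive \emph{semi}definite in the $v$-direction. For any $(d_u,d_v)$ one computes $\langle(d_u,d_v),V(d_u,d_v)\rangle=\|d_u\|^2+\sigma\langle\bar{A}^{T}(d_u,d_v),Q\bar{A}^{T}(d_u,d_v)\rangle$, and since $Q=\text{diag}(q)$ is a $0/1$ projection this quadratic form is nonnegative. Hence $V(d_u,d_v)=0$ forces $d_u=0$ and $QB^{T}d_v=0$, i.e. $(B^{T}d_v)_j=0$ for every $j\in\mathcal{J}$, which is the same as $B_{\mathcal{J}}^{T}d_v=0$. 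Under $\text{lin}(B\hat{Q})=\mathbb{R}^s$ the submatrix $B_{\mathcal{J}}$ has full row rank $s$, so $d_v=0$ and therefore $V\succ0$. By the upper semicontinuity of the Clarke generalized Jacobian, positive definiteness persists on a neighborhood of $\hat{y}$, yielding a uniform bound $\|(V_j+\epsilon_j\,\text{diag}(0,\textbf{I}_s))^{-1}\|\leq c$ once $y^j$ is close to $\hat{y}$ (the regularization term $\epsilon_j\,\text{diag}(0,\textbf{I}_s)\succeq0$ only improves conditioning).

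With these pieces in place the rate follows from a routine error analysis. Writing $\tilde{V}_j=V_j+\epsilon_j\,\text{diag}(0,\textbf{I}_s)$ and using $\nabla\theta(\hat{y})=0$ together with the inexactness bound $\|\tilde{V}_j d^j+\nabla\theta(y^j)\|\leq\|\nabla\theta(y^j)\|^{1+\tau}$, the full Newton step satisfies $\|y^j+d^j-\hat{y}\|\leq\|\tilde{V}_j^{-1}\|\big(\|\nabla\theta(y^j)-\nabla\theta(\hat{y})-V_j(y^j-\hat{y})\|+\epsilon_j\|y^j-\hat{y}\|+\|\nabla\theta(y^j)\|^{1+\tau}\big)$. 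Strong semismoothness bounds the first term by $O(\|y^j-\hat{y}\|^2)$; since $\epsilon_j=\tau_1\min\{\tau_2,\|\nabla\theta(y^j)\|\}=O(\|y^j-\hat{y}\|)$ and $\|\nabla\theta(y^j)\|=O(\|y^j-\hat{y}\|)$ by Lipschitz continuity of $\nabla\theta$, the remaining terms are $O(\|y^j-\hat{y}\|^2)$ and $O(\|y^j-\hat{y}\|^{1+\tau})$ respectively. As $\tau\leq1$ this gives $\|y^j+d^j-\hat{y}\|=O(\|y^j-\hat{y}\|^{1+\tau})$. A standard argument then shows the Armijo line search in Algorithm \ref{alg2} accepts the unit step $\alpha_j=1$ for all large $j$, so that $y^{j+1}=y^j+d^j$ and the displayed estimate holds; the local contraction simultaneously upgrades the accumulation point to full convergence of $\{(u^j,v^j)\}$ to $(\hat{u},\hat{v})$.
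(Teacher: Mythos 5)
Your proposal is correct and follows essentially the same route as the paper: constraint nondegeneracy forces every element of the generalized Hessian at $\hat{y}$ to be positive definite (your quadratic-form argument is exactly the content of the paper's Propositions 5 and 6, which use the idempotence of $\hat{Q}$ to write $B\hat{Q}B^{T}=(B\hat{Q})(B\hat{Q})^{T}$), and the superlinear rate then follows from the standard local theory for inexact semismooth Newton methods, which the paper simply delegates to Theorem 3.5 of Zhao, Sun and Toh. The only difference is that you inline the error recursion, unit-step acceptance, and the upgrade from accumulation point to full convergence, all of which the paper leaves inside that citation.
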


\section{Numerical experiments}\label{sec6}
In this section, we evaluate the performance of our algorithm to solve the constrained Lasso problem on both synthetic and real datasets. 
We compare with four state-of-the-art methods: primal-dual method \cite{chambolle2011first}, linearized augmented Lagrangian method \cite{yang2013linearized}, ADMM \cite{boyd2011distributed}, accelerated ADMM (A-ADMM) \cite{goldstein2014fast}. 
For both the ADMM and A-ADMM method, we set the step size to be 1.618. 

\indent We set the penalty parameter $\lambda$ in the constrained Lasso problem as
$\lambda = \lambda_l\|A^{T}b\|_{\infty} $,
where $0<\lambda_l<1$. In the numerical experiments, we measure the accuracy of solution $\{x,u,v,w\}$ generated by our algorithm by the following residual:
\begin{align}
\eta_{\text{cLasso}} =\max\{\eta_{P},\eta_{D}\},
\end{align}
where 
\begin{align*}
\eta_{P}=\dfrac{\|Bx-d\|}{1+\|d\|},\quad\eta_{D}=\|A^Tu-B^Tv+w\|
\end{align*}
are the primal and dual feasibility. Moreover, we compute the relative primal dual gap defined by
\begin{align*}
\eta_{\text{relgap}}=\dfrac{\text{obj}_P-\text{obj}_D}{1+|\text{obj}_P|+|\text{obj}_D|},
\end{align*}
where $\text{obj}_P=\frac{1}{2}\|Ax-b\|^2+\lambda\|x\|_1$ and $\text{obj}_D=-(\frac{1}{2}\|u\|^2+\langle b,u\rangle-\langle d,v\rangle)$. We stop our algorithm  when $\eta_{\text{cLasso}}<\varepsilon$ for a given tolerance $\varepsilon$ and stop other compared algorithms when both the primal and dual residuals are smaller than $\varepsilon$. Before the comparison, we run our algorithm with high accuracy $\varepsilon=10^{-10}$ and set this optimal value as the baseline. The optimality gap is measured by $\eta_{\text{gap}}=f(x)-f(x^{*})$. For our numerical experiments, we set $\varepsilon=10^{-6}$. All the algorithms will be stopped when they reach the maximum iteration number, which is set at 100 for our algorithm and at 10000 for other algorithms. The codes were written in MATLAB and run on a server with 10 cores with 20 threads Intel Xeon E7-4870 CPU at 2.4 GHz with 1000 GB memory. 

\indent Our numerical tests have the following scenarios on both synthetic and real datasets: 
\begin{itemize}	
	\setlength{\itemsep}{0.6pt}
	\setlength{\parskip}{0.6pt}
	\item sum to zero constraints,
	\item $B$ and $d$ are randomly generated,
	\item transform generalized Lasso problem to an equivalent constrained Lasso problem.
\end{itemize}
Due to limited space, we only present the first and last ones in this paper, and full details of the experiments for all three scenarios are deferred to the supplementary material.
\subsection{Synthetic data}
In this section we display the performance of our algorithm on synthetic datasets. For synthetic data, we generate $A\in\mathbb{R}^{m\times n}$ from independent and identical (iid) standard normal distribution and $b=A\mathring{x}+\varrho$, where $\varrho\in N(0,0.001*\textbf{I}_m)$ and $\mathring{x}$ is a sparse vector. We set $n=10m$ with $m=200,300,500,800,1000$ and $\lambda_l=10^{-2},10^{-3},10^{-4}$.
\subparagraph{Sum to zero constraints}
In this subsection we consider the sum to zero constraints, i.e., $e^{T}x=0$. Recently this problem generates much interest in the statistics and bioinformatics communities \cite{altenbuchinger2016reference,lin2014variable,shi2016regression}. 

\indent Table \ref{tab1} reports the performance of all methods we test on synthetic datasets. Note that $m$ is the sample size and $n$ is the dimension of each sample. Moreover, "nnz" denotes the number of nonzeros in the solution $x$ obtained by our method which is defined as follows:
\begin{align*}
\text{nnz}:=\min\{k|\sum_{i=1}^{k}|x^{\downarrow}_k|\geq 0.999\|x\|_1\},
\end{align*}
where $x^{\downarrow}$ is sorted in the decreasing order of absolute value of $x$ such that $|x^{\downarrow}_1|\geq|x^{\downarrow}_2|\geq\dots\geq|x^{\downarrow}_n|$. 

\begin{table}[t] 
	\small
	\caption{\small Performance of our SSNAL method(a), primal dual method(b), linearized ALM(c), ADMM(d) and A-ADMM(e) with sum to zero constraints on synthetic data sets. 'nnz' denotes the number of nonzeros of the solution obtained by our algorithm.}	
	\vskip 0.1in
	\label{tab1}
	\begin{tabular}{cccccccc}
		\hline  & $\lambda_l$& nnz & \multicolumn{5}{c}{ running time (seconds)}   \\
		\hline size $m;n$ & & & a&b&c&d&e \\
		\hline\multirow{3}{*}{200;2000} & $10^{-2}$& 189& \textbf{0.4}&5.3&5.0&6.2&8.3\\
		& $10^{-3}$&193&\textbf{1.0}&6.5&4.6&5.9&7.9\\
		&$10^{-4}$&195&\textbf{1.3}&6.3&6.7&7.3&6.4   \\
		\hline\multirow{3}{*}{300;3000} & $10^{-2}$& 286& \textbf{1.2}&19&18&22&25\\
		& $10^{-3}$&289&\textbf{2.3}&18&17&23&22\\
		&$10^{-4}$&291&\textbf{3.2}&18&25&24&24   \\
		\hline\multirow{3}{*}{500;5000} & $10^{-2}$&471&\textbf{2.7}&76&80&155&162 \\
		& $10^{-3}$&481&\textbf{3.8}&70&95&158&158 \\
		&$10^{-4}$&484&\textbf{5.3}&78&81&155&159\\
		\hline
		\multirow{3}{*}{800;8000} & $10^{-2}$&746&\textbf{6.2}&151&149&355&359\\
		& $10^{-3}$&760&\textbf{10}&153&159&357&361\\
		&$10^{-4}$& 764&\textbf{15}&157&169&355&363 \\
		\hline
		\multirow{3}{*}{1000;10000} & $10^{-2}$&935&\textbf{9.0}&225&249&551&553\\
		& $10^{-3}$&958&\textbf{18}&252&263&546&556\\
		&$10^{-4}$&964&\textbf{23}&236&231&550&556  \\
		\hline		
	\end{tabular}
	\vskip -0.1in
\end{table}

\begin{figure}[!hbt] 
	\vskip 0.1in
	\begin{minipage}[t]{0.48\linewidth}
		\includegraphics[width=1.0\textwidth]{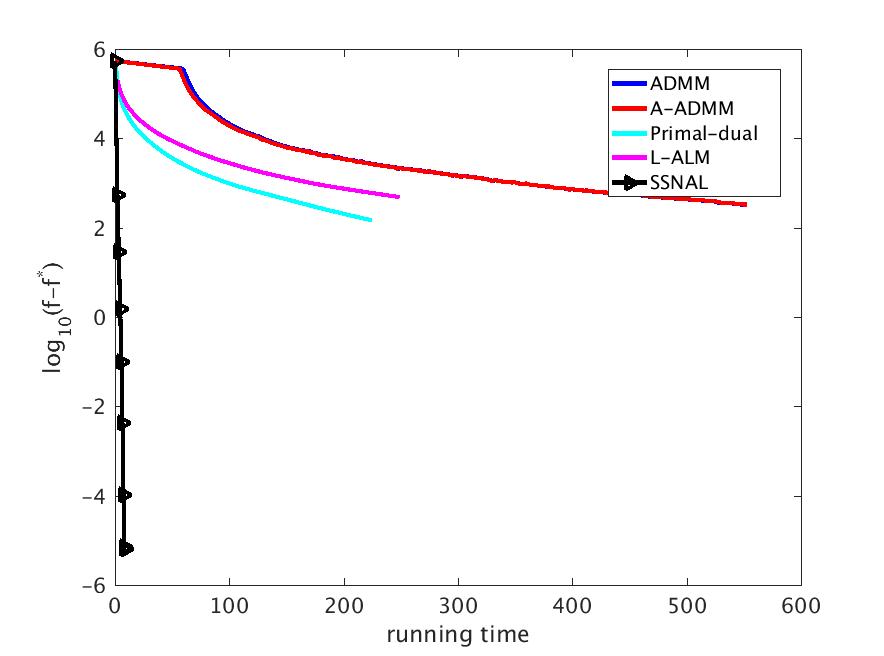}
	\end{minipage}
	\begin{minipage}[t]{0.48\linewidth}
		\includegraphics[width=1.0\textwidth]{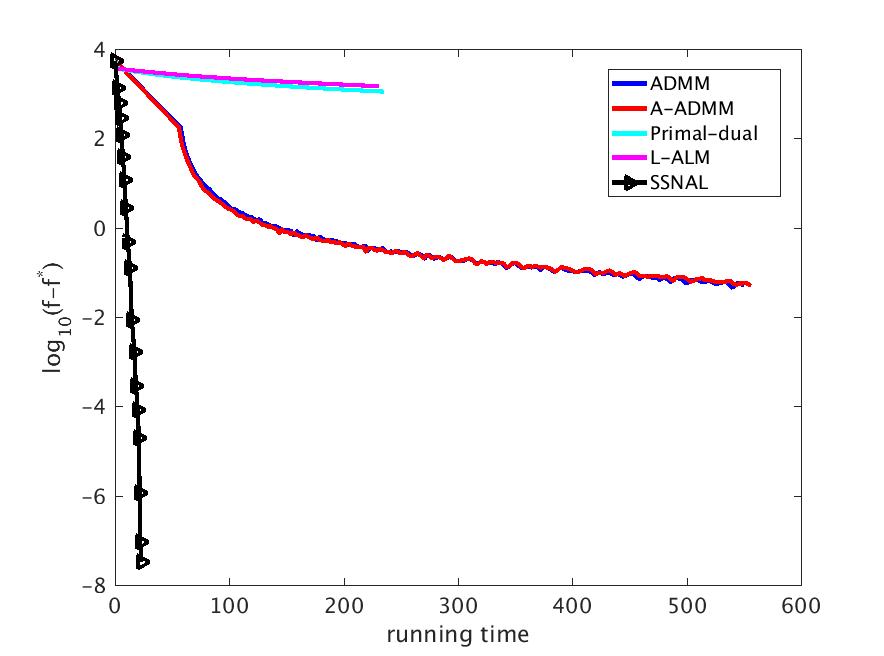}
	\end{minipage}
	\caption{Constrained Lasso with sum to zero constraints, $m=1000, n=10000$ and $\lambda_l=10^{-2},10^{-4}$.}
	\label{fig1}
	\vskip -0.1in	
\end{figure}
\indent Figure \ref{fig1} plots the optimality gap with running time for the case of $m=1000, n=10000$ and $\lambda_l=10^{-2},10^{-4}$. From the figures, it is easy to observe that our algorithm is much faster than others and achieve a more accurate solution. 

\subparagraph{Generalized Lasso}
In this section, we first transform the generalized Lasso problem to an equivalent constrained Lasso problem using techniques from \cite{gaines2018algorithms}. 

Recall the generalized Lasso model in (\ref{1.3}). If $\text{rank}(D)=n$ and set $p=n+s$, a singular value decomposition of $D$ is given as follows:
\begin{align*}
D=U\Sigma V^T=[U_1,U_2]\begin{bmatrix}
\Sigma_1  \\
\textbf{0} 
\end{bmatrix}V_1^T ,
\end{align*}
where $U_1\in\mathbb{R}^{p\times n}$, $U_2\in\mathbb{R}^{p\times s}$, $\Sigma_1\in\mathbb{R}^{n\times n}$, $V_1\in\mathbb{R}^{n\times n}$ and $\textbf{0}\in\mathbb{R}^{s\times n}$. Then (\ref{1.3}) is equivalent to the constrained Lasso problem:
\begin{align} \label{5.3}
\begin{split}
\min_{z}\ & \frac{1}{2}\|AD^{\dagger}z-b\|^2+\lambda\|z\|_1 \\
\text{s.t.}\ &U_2^Tz=0,
\end{split}
\end{align}
where $D^{\dagger}=V_1\Sigma_1^{-1}U_1^T\in\mathbb{R}^{n\times p}$ , $z\in\mathbb{R}^{p}$ and we can recover the solution $x^{*}$ to (\ref{1.3}) via $\bar{x}=D^{\dagger}\bar{z}$, where $\bar{z}$ is the optimal solution of (\ref{5.3}). In our experiments, we construct $D=\begin{bmatrix}
D_1 \\ D_2
\end{bmatrix}$, where $D_1$ is an $n\times n$ identity matrix and $D_2$ is an $s\times n$ random matrix.

\indent In Table \ref{tab3} we summarize the numerical results on Problem (\ref{5.3}). Observe that our algorithm is 5-10 times faster than other algorithms for all choices of $\lambda_l$. 
\begin{table}[ht] 
	\small
	\caption{\small Performance of our SSNAL method(a), primal dual method(b), linearized ALM(c), ADMM(d) and A-ADMM(e) with generalized Lasso problem on synthetic datasets.}
	\vskip 0.1in
	\label{tab3}
	\begin{tabular}{cccccccc}
		\hline  & $\lambda_l$& nnz &\multicolumn{5}{c}{ running time (seconds) }   \\
		\hline size $m;n$ & & &a&b&c&d&e \\
		\hline\multirow{3}{*}{200;2000} & $10^{-2}$&221&\textbf{3.7}&8.7&18&25&27\\
		& $10^{-3}$&220&\textbf{1.9}&8.5&17&23&25\\
		&$10^{-4}$&219&\textbf{2.2}&10&17&24&24  \\
		\hline\multirow{3}{*}{300;3000} & $10^{-2}$&311&\textbf{4.6}&14&28&43&43\\
		& $10^{-3}$&317&\textbf{3.0}&13&30&43&44\\
		&$10^{-4}$&318&\textbf{2.6}&13&32&41&44  \\
		\hline\multirow{3}{*}{500;5000} & $10^{-2}$&495&\textbf{12}&74&111&176&179\\
		& $10^{-3}$&505&\textbf{10}&77&113&180&179 \\
		&$10^{-4}$&505&\textbf{7.5}&85&111&177&179\\
		\hline
		\multirow{3}{*}{800;8000} & $10^{-2}$&779&\textbf{27}&152&200&383&387\\
		& $10^{-3}$&789&\textbf{24}&158&216&383&386\\
		&$10^{-4}$&795&\textbf{17}&152&210&390&392  \\
		\hline
		\multirow{3}{*}{1000;10000} & $10^{-2}$&975&\textbf{51}&254&311&589&589\\
		& $10^{-3}$&993&\textbf{41}&234&325&597&595\\
		&$10^{-4}$&995&\textbf{27}&250&322&589&587  \\
		\hline		
	\end{tabular}
	\vskip -0.1in
\end{table}
 
\begin{figure}[ht]
	\vskip 0.1in 
	\begin{minipage}[t]{0.48\linewidth}
		\includegraphics[width=1.0\textwidth]{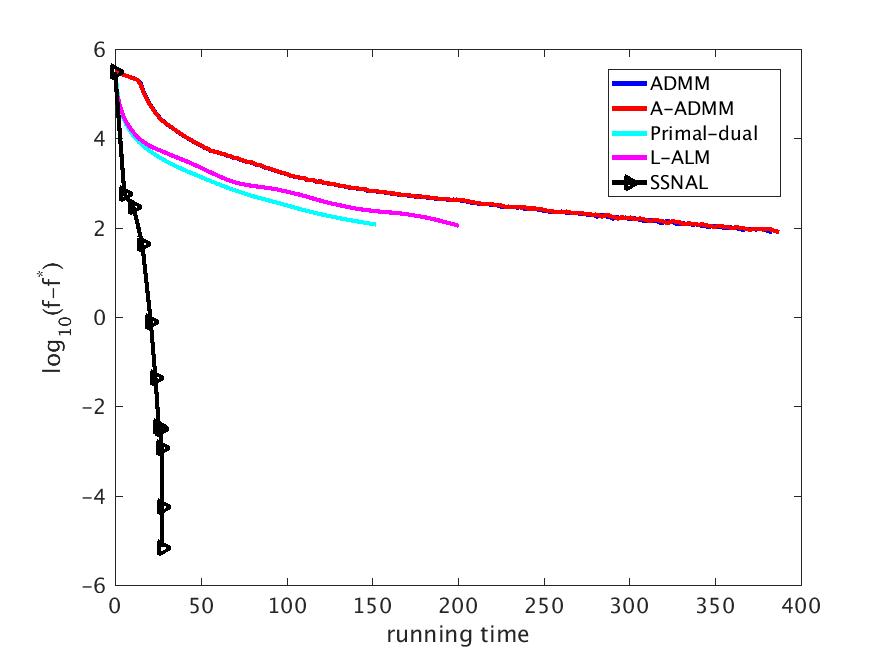}
	\end{minipage}
	\begin{minipage}[t]{0.48\linewidth}
		\includegraphics[width=1.0\textwidth]{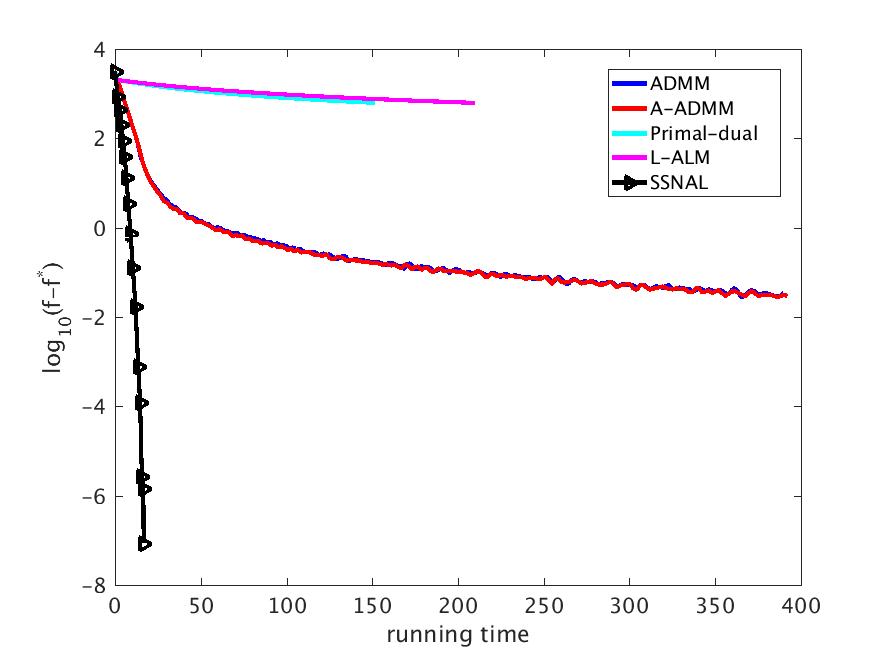}
	\end{minipage}
	\caption{Constrained Lasso with generalized Lasso problem, $m=800, n=8000, s=30$ and $\lambda_l=10^{-2},10^{-4}$.}
	\label{fig3}
	\vskip -0.1in	
\end{figure}

\indent We plot the optimality gap with running time in Figure \ref{fig3} for the case $m=800, n=8000$ and $s=30$. The figures demonstrate that our algorithm is superior to other methods both in terms of solution accuracy and running time. 

\subsection{Real Data}
In this section we test all algorithms on LIBSVM regression datasets \cite{chang2011libsvm}. We preprocess the dataset \textbf{abalone, bodyfat, housing, mpg} and \textbf{space\_ga} to expand the original features based on polynomial basis functions as stated in \cite{huang2010predicting}. For example, \textbf{abalone7} means that we expand the feature of \textbf{abalone} by an order 7 polynomial basis function. As mentioned before, we run numerical experiments with the sum to zero constraints and generalized Lasso problem and set $\lambda_l=10^{-3}$ and $10^{-4}$.

\subparagraph{Sum to zero constraints}
We consider the sum to zero constraints and summarize our experimental results on UCI regression datasets in Table \ref{tab4}. 
\indent Moreover, we display the details of running time with optimality gap on \textbf{bodyfat5} dataset in Figure \ref{fig4}, which highlight that our algorithm attains a more accurate solution much faster than other algorithms. 
\begin{table}[!hbp] 
	\small
	\caption{\small Performance of our SSNAL method(a), primal dual method(b), linearized ALM(c), ADMM(d) and A-ADMM(e) with sum to zero constraints on UCI regression datasets.}
	\vskip 0.1in
	\label{tab4}
	\begin{tabular}{cccccccc}
		\hline  & $\lambda_l$& nnz &\multicolumn{5}{c}{ running time (seconds)}   \\
		\hline problem name  & & & a&b&c&d&e \\
		$m;n$ &&&\multicolumn{5}{c}{}\\
		\hline
		abalone7&$10^{-3}$&23&\textbf{21}&105&115&250&247 \\
		4177;6435&$10^{-4}$&63&\textbf{30}&104&118&165&169\\
		\hline
		bodyfat5&$10^{-3}$&39&\textbf{3.6}&302&296&534&544 \\
		252;11628&$10^{-4}$&79&\textbf{4.3}&296&298&235&229 \\
		\hline
		housing5&$10^{-3}$&113&\textbf{5.0}&188&185&371&365 \\
		506;8568&$10^{-4}$&216&\textbf{6.7}&179&189&121&96\\
		\hline 
		mpg7 & $10^{-3}$&43&\textbf{1.3}&35&36&19&18\\
		392;3432	& $10^{-4}$&132&\textbf{2.2}&35&40&9.1&7.5\\
		\hline
		space\_ga9 & $10^{-3}$&14&\textbf{6.5}&78&89&161&167 \\
		3107;5005 &$10^{-4}$&36&\textbf{11}&80&97&50&40 \\
		\hline		
	\end{tabular}
	\vskip -0.1in
\end{table}
\begin{figure}[t] 
	\vskip 0.1in
	\begin{minipage}[!hbp]{0.48\linewidth}
		\includegraphics[width=1.0\textwidth]{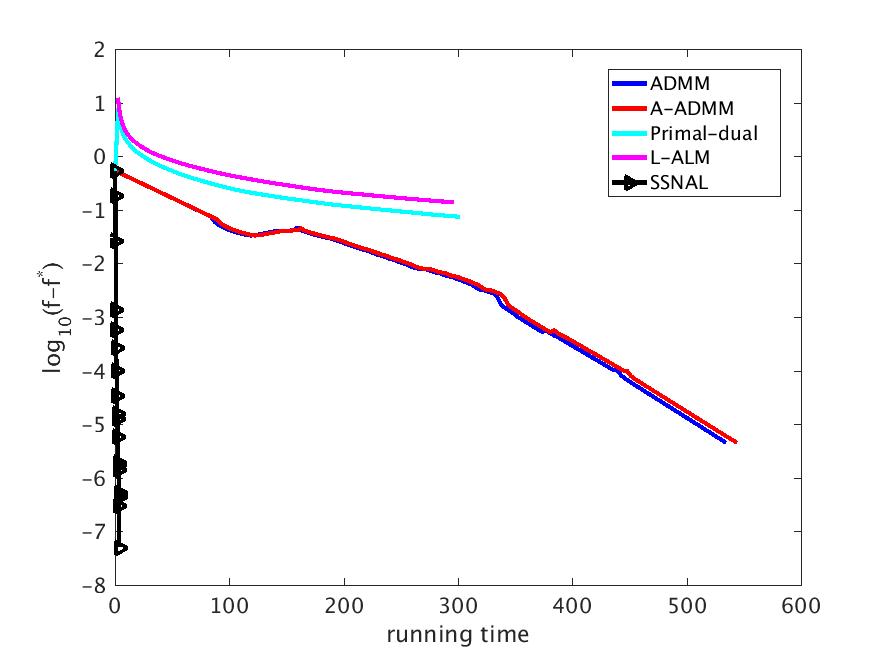}
	\end{minipage}
	\begin{minipage}[!hbp]{0.48\linewidth}
		\includegraphics[width=1.0\textwidth]{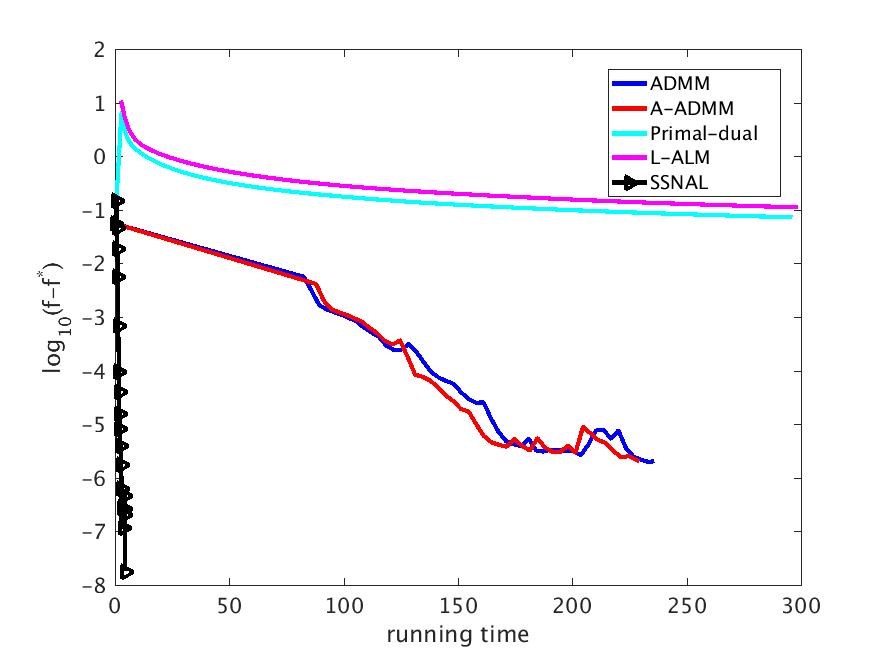}
	\end{minipage}
	\caption{Constrained Lasso with sum to zero constraints, $\lambda_l=10^{-3},10^{-4}$ on  \textbf{bodyfat5} dataset.}
	\label{fig4}
	\vskip -0.1in	
\end{figure}

\subparagraph{Generalized Lasso}
For the transformed generalized Lasso problem (\ref{5.3}), we test on real datasets and present numerical results  in Table \ref{tab6} and 
 plot the optimality gap with running time on \textbf{mpg7} dataset in Figure \ref{fig6} . One can observe that our algorithm outperforms other first-order methods both in solution accuracy and running time.
\begin{figure}[t] 
	\vskip 0.1in
	\begin{minipage}[!hbp]{0.48\linewidth}
		\includegraphics[width=1.0\textwidth]{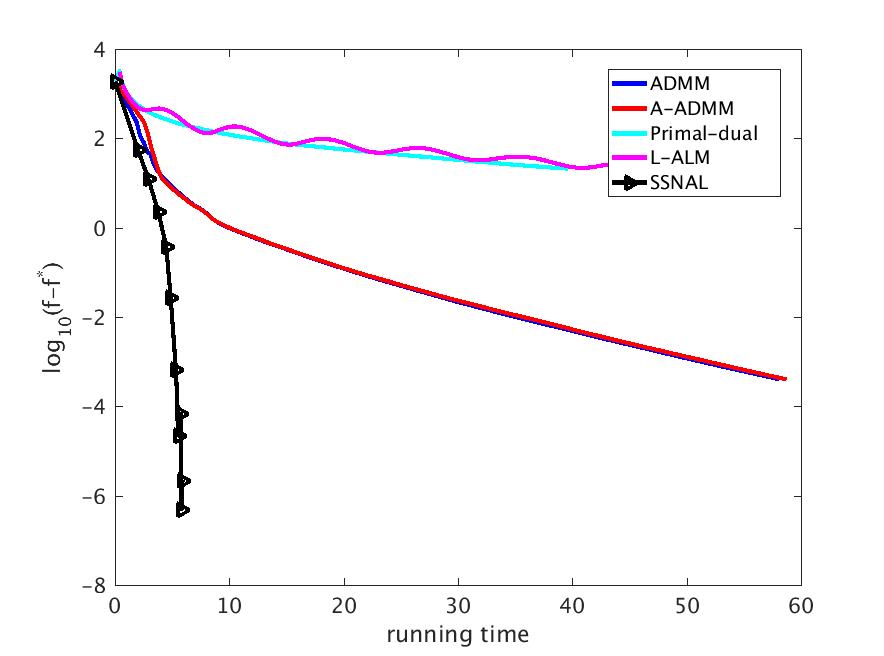}
	\end{minipage}
	\begin{minipage}[!hbp]{0.48\linewidth}
		\includegraphics[width=1.0\textwidth]{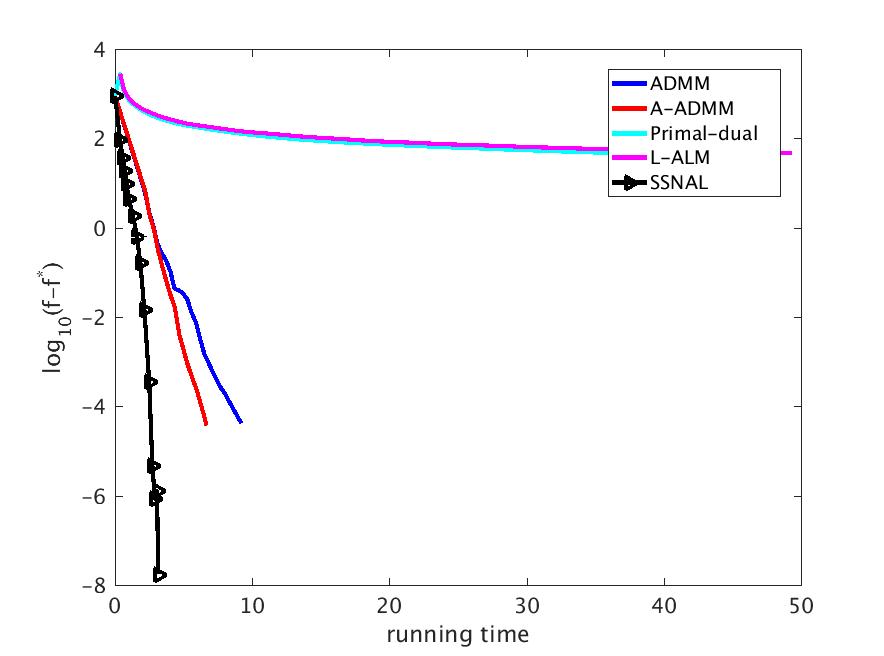}
	\end{minipage}
	\caption{Constrained Lasso with generalized Lasso problem, $\lambda_l=10^{-3},10^{-4}$ on  \textbf{mpg7} dataset.}
	\label{fig6}
	\vskip -0.1in	
\end{figure}
\begin{table}[!htb] 
	\small
	\caption{\small Performance of our SSNAL method(a), primal dual method(b), linearized ALM(c), ADMM(d) and A-ADMM(e) with generalized Lasso problem on UCI regression datasets.}
	\vskip 0.1in
	\label{tab6}
	\begin{tabular}{cccccccc}
		\hline  & $\lambda_l$& nnz &\multicolumn{5}{c}{running time (seconds)}   \\
		\hline problem name  & & & a&b&c&d&e \\
		$m;n$ &&&\multicolumn{5}{c}{}\\
		\hline
		abalone7&$10^{-3}$&46&\textbf{51}&127&165&267&270 \\
		4177;6435&$10^{-4}$&78&\textbf{56}&114&176&98&84\\
		\hline
		bodyfat5&$10^{-3}$&44&\textbf{3.7}&321&395&772&783\\
		252;11628&$10^{-4}$&68&\textbf{4.3}&309&393&121&93 \\
		\hline
		housing5&$10^{-3}$&105&\textbf{11}&198&246&458&463 \\
		506;8568&$10^{-4}$&219&\textbf{7.7}&194&257&153&146\\
		\hline 
		mpg7 & $10^{-3}$&61&\textbf{5.9}&40&47&58&59\\
		392;3432	& $10^{-4}$&127&\textbf{3.1}&37&49&9.2&6.7\\
		\hline
		space\_ga9 & $10^{-3}$&47&\textbf{15}&83&116&182&180 \\
		3107;5005 &$10^{-4}$&78&\textbf{15}&83&111&30&24\\
		\hline		
	\end{tabular}
	\vskip -0.1in
\end{table}

\section{Conclusion}
In this paper we propose a semismooth Newton augmented Lagrangian method to solve the dual problem of the constrained Lasso problem. Under some mild assumptions, we establish the superlinear convergence for both outer loop and inner subproblem solver. By exploiting the sparse structure of the problem, we provide  efficient implementations to solve the subproblem and  greatly reduce the computational cost. We present extensive numerical experiments to show the efficiency and robustness of our algorithm on both synthetic and real datasets. As a future work, we plan to extend our algorithmic framework to handle more general constraints. Moreover, stochastic and distributed versions of this algorithmic framework are also interesting.

\newpage
\bibliography{references2019}

\begin{thebibliography}{42}
\providecommand{\natexlab}[1]{#1}
\providecommand{\url}[1]{\texttt{#1}}
\expandafter\ifx\csname urlstyle\endcsname\relax
  \providecommand{\doi}[1]{doi: #1}\else
  \providecommand{\doi}{doi: \begingroup \urlstyle{rm}\Url}\fi

\bibitem[Altenbuchinger et~al.(2016)Altenbuchinger, Rehberg, Zacharias,
  St{\"a}mmler, Dettmer, Weber, Hiergeist, Gessner, Holler, Oefner,
  et~al.]{altenbuchinger2016reference}
Altenbuchinger, M., Rehberg, T., Zacharias, H., St{\"a}mmler, F., Dettmer, K.,
  Weber, D., Hiergeist, A., Gessner, A., Holler, E., Oefner, P.~J., et~al.
\newblock Reference point insensitive molecular data analysis.
\newblock \emph{Bioinformatics}, 33\penalty0 (2):\penalty0 219--226, 2016.

\bibitem[Arag{\'o}n~Artacho \& Geoffroy(2008)Arag{\'o}n~Artacho and
  Geoffroy]{aragon2008characterization}
Arag{\'o}n~Artacho, F.~J. and Geoffroy, M.~H.
\newblock Characterization of metric regularity of subdifferentials.
\newblock \emph{Journal of Convex Analysis}, 15\penalty0 (2):\penalty0
  365--380, 2008.

\bibitem[Bauschke \& Borwein(1996)Bauschke and Borwein]{bauschke1996projection}
Bauschke, H.~H. and Borwein, J.~M.
\newblock On projection algorithms for solving convex feasibility problems.
\newblock \emph{SIAM review}, 38\penalty0 (3):\penalty0 367--426, 1996.

\bibitem[Bauschke et~al.(1999)Bauschke, Borwein, and Li]{bauschke1999strong}
Bauschke, H.~H., Borwein, J.~M., and Li, W.
\newblock Strong conical hull intersection property, bounded linear regularity,
  jameson’s property (g), and error bounds in convex optimization.
\newblock \emph{Mathematical Programming}, 86\penalty0 (1):\penalty0 135--160,
  1999.

\bibitem[Beck \& Teboulle(2009)Beck and Teboulle]{beck2009fast}
Beck, A. and Teboulle, M.
\newblock A fast iterative shrinkage-thresholding algorithm for linear inverse
  problems.
\newblock \emph{SIAM journal on imaging sciences}, 2\penalty0 (1):\penalty0
  183--202, 2009.

\bibitem[Bertsekas(2014)]{bertsekas2014constrained}
Bertsekas, D.~P.
\newblock \emph{Constrained optimization and Lagrange multiplier methods}.
\newblock Academic press, 2014.

\bibitem[Boyd et~al.(2011)Boyd, Parikh, Chu, Peleato, Eckstein,
  et~al.]{boyd2011distributed}
Boyd, S., Parikh, N., Chu, E., Peleato, B., Eckstein, J., et~al.
\newblock Distributed optimization and statistical learning via the alternating
  direction method of multipliers.
\newblock \emph{Foundations and Trends{\textregistered} in Machine learning},
  3\penalty0 (1):\penalty0 1--122, 2011.

\bibitem[Burnham \& Anderson(2003)Burnham and Anderson]{burnham2003model}
Burnham, K.~P. and Anderson, D.~R.
\newblock \emph{Model selection and multimodel inference: a practical
  information-theoretic approach}.
\newblock Springer Science \& Business Media, 2003.

\bibitem[Cand{\`e}s \& Wakin(2008)Cand{\`e}s and Wakin]{candes2008introduction}
Cand{\`e}s, E.~J. and Wakin, M.~B.
\newblock An introduction to compressive sampling.
\newblock \emph{IEEE signal processing magazine}, 25\penalty0 (2):\penalty0
  21--30, 2008.

\bibitem[Chambolle \& Pock(2011)Chambolle and Pock]{chambolle2011first}
Chambolle, A. and Pock, T.
\newblock A first-order primal-dual algorithm for convex problems with
  applications to imaging.
\newblock \emph{Journal of mathematical imaging and vision}, 40\penalty0
  (1):\penalty0 120--145, 2011.

\bibitem[Chang \& Lin(2011)Chang and Lin]{chang2011libsvm}
Chang, C.-C. and Lin, C.-J.
\newblock Libsvm: a library for support vector machines.
\newblock \emph{ACM transactions on intelligent systems and technology (TIST)},
  2\penalty0 (3):\penalty0 27, 2011.

\bibitem[Chen et~al.(2001)Chen, Donoho, and Saunders]{chen2001atomic}
Chen, S.~S., Donoho, D.~L., and Saunders, M.~A.
\newblock Atomic decomposition by basis pursuit.
\newblock \emph{SIAM review}, 43\penalty0 (1):\penalty0 129--159, 2001.

\bibitem[Clarke(1990)]{clarke1990optimization}
Clarke, F.~H.
\newblock \emph{Optimization and nonsmooth analysis}, volume~5.
\newblock Siam, 1990.

\bibitem[Dontchev \& Rockafellar(2009)Dontchev and
  Rockafellar]{dontchev2009implicit}
Dontchev, A.~L. and Rockafellar, R.~T.
\newblock Implicit functions and solution mappings.
\newblock \emph{Springer Monogr. Math.}, 2009.

\bibitem[Fan \& Li(2001)Fan and Li]{fan2001variable}
Fan, J. and Li, R.
\newblock Variable selection via nonconcave penalized likelihood and its oracle
  properties.
\newblock \emph{Journal of the American statistical Association}, 96\penalty0
  (456):\penalty0 1348--1360, 2001.

\bibitem[Gaines et~al.(2018)Gaines, Kim, and Zhou]{gaines2018algorithms}
Gaines, B.~R., Kim, J., and Zhou, H.
\newblock Algorithms for fitting the constrained lasso.
\newblock \emph{Journal of Computational and Graphical Statistics}, \penalty0
  (just-accepted), 2018.

\bibitem[Goldstein et~al.(2014)Goldstein, O'Donoghue, Setzer, and
  Baraniuk]{goldstein2014fast}
Goldstein, T., O'Donoghue, B., Setzer, S., and Baraniuk, R.
\newblock Fast alternating direction optimization methods.
\newblock \emph{SIAM Journal on Imaging Sciences}, 7\penalty0 (3):\penalty0
  1588--1623, 2014.

\bibitem[Golub \& Van~Loan(2012)Golub and Van~Loan]{golub2012matrix}
Golub, G.~H. and Van~Loan, C.~F.
\newblock \emph{Matrix computations}, volume~3.
\newblock JHU Press, 2012.

\bibitem[Hiriart-Urruty et~al.(1984)Hiriart-Urruty, Strodiot, and
  Nguyen]{hiriart1984generalized}
Hiriart-Urruty, J.-B., Strodiot, J.-J., and Nguyen, V.~H.
\newblock Generalized hessian matrix and second-order optimality conditions for
  problems withc 1, 1 data.
\newblock \emph{Applied mathematics and optimization}, 11\penalty0
  (1):\penalty0 43--56, 1984.

\bibitem[Huang et~al.(2010)Huang, Jia, Yu, Chun, Maniatis, and
  Naik]{huang2010predicting}
Huang, L., Jia, J., Yu, B., Chun, B.-G., Maniatis, P., and Naik, M.
\newblock Predicting execution time of computer programs using sparse
  polynomial regression.
\newblock In \emph{Advances in neural information processing systems}, pp.\
  883--891, 2010.

\bibitem[James et~al.(2013)James, Paulson, and
  Rusmevichientong]{james2013penalized}
James, G.~M., Paulson, C., and Rusmevichientong, P.
\newblock Penalized and constrained regression.
\newblock \emph{Unpublished Manuscript, available at http://www-bcf. usc.
  edu/\~{} gareth/research/Research. html}, 2013.

\bibitem[Li et~al.(2015)Li, Sun, and Toh]{li2015qsdpnal}
Li, X., Sun, D., and Toh, K.-C.
\newblock Qsdpnal: A two-phase proximal augmented lagrangian method for convex
  quadratic semidefinite programming.
\newblock \emph{arXiv preprint arXiv:1512.08872}, pp.\  1--35, 2015.

\bibitem[Li et~al.(2018{\natexlab{a}})Li, Sun, and Toh]{li2018efficiently}
Li, X., Sun, D., and Toh, K.-C.
\newblock On efficiently solving the subproblems of a level-set method for
  fused lasso problems.
\newblock \emph{SIAM Journal on Optimization}, 28\penalty0 (2):\penalty0
  1842--1866, 2018{\natexlab{a}}.

\bibitem[Li et~al.(2018{\natexlab{b}})Li, Sun, and Toh]{li2018highly}
Li, X., Sun, D., and Toh, K.-C.
\newblock A highly efficient semismooth newton augmented lagrangian method for
  solving lasso problems.
\newblock \emph{SIAM Journal on Optimization}, 28\penalty0 (1):\penalty0
  433--458, 2018{\natexlab{b}}.

\bibitem[Lin et~al.(2014)Lin, Shi, Feng, and Li]{lin2014variable}
Lin, W., Shi, P., Feng, R., and Li, H.
\newblock Variable selection in regression with compositional covariates.
\newblock \emph{Biometrika}, 101\penalty0 (4):\penalty0 785--797, 2014.

\bibitem[Luo \& Tseng(1992)Luo and Tseng]{luo1992linear}
Luo, Z.-Q. and Tseng, P.
\newblock On the linear convergence of descent methods for convex essentially
  smooth minimization.
\newblock \emph{SIAM Journal on Control and Optimization}, 30\penalty0
  (2):\penalty0 408--425, 1992.

\bibitem[Meinshausen(2007)]{meinshausen2007relaxed}
Meinshausen, N.
\newblock Relaxed lasso.
\newblock \emph{Computational Statistics \& Data Analysis}, 52\penalty0
  (1):\penalty0 374--393, 2007.

\bibitem[Mifflin(1977)]{mifflin1977semismooth}
Mifflin, R.
\newblock Semismooth and semiconvex functions in constrained optimization.
\newblock \emph{SIAM Journal on Control and Optimization}, 15\penalty0
  (6):\penalty0 959--972, 1977.

\bibitem[Nesterov(2013)]{nesterov2013gradient}
Nesterov, Y.
\newblock Gradient methods for minimizing composite functions.
\newblock \emph{Mathematical Programming}, 140\penalty0 (1):\penalty0 125--161,
  2013.

\bibitem[Rockafellar(1976)]{rockafellar1976augmented}
Rockafellar, R.~T.
\newblock Augmented lagrangians and applications of the proximal point
  algorithm in convex programming.
\newblock \emph{Mathematics of operations research}, 1\penalty0 (2):\penalty0
  97--116, 1976.

\bibitem[Rockafellar(2015)]{rockafellar2015convex}
Rockafellar, R.~T.
\newblock \emph{Convex analysis}.
\newblock Princeton university press, 2015.

\bibitem[Shi et~al.(2016)Shi, Zhang, Li, et~al.]{shi2016regression}
Shi, P., Zhang, A., Li, H., et~al.
\newblock Regression analysis for microbiome compositional data.
\newblock \emph{The Annals of Applied Statistics}, 10\penalty0 (2):\penalty0
  1019--1040, 2016.

\bibitem[Sun \& Sun(2002)Sun and Sun]{sun2002semismooth}
Sun, D. and Sun, J.
\newblock Semismooth matrix-valued functions.
\newblock \emph{Mathematics of Operations Research}, 27\penalty0 (1):\penalty0
  150--169, 2002.

\bibitem[Tibshirani(1996)]{tibshirani1996regression}
Tibshirani, R.
\newblock Regression shrinkage and selection via the lasso.
\newblock \emph{Journal of the Royal Statistical Society. Series B
  (Methodological)}, pp.\  267--288, 1996.

\bibitem[Tibshirani(2011)]{tibshirani2011solution}
Tibshirani, R.~J.
\newblock \emph{The solution path of the generalized lasso}.
\newblock Stanford University, 2011.

\bibitem[Xu(2017)]{xu2017accelerated}
Xu, Y.
\newblock Accelerated first-order primal-dual proximal methods for linearly
  constrained composite convex programming.
\newblock \emph{SIAM Journal on Optimization}, 27\penalty0 (3):\penalty0
  1459--1484, 2017.

\bibitem[Yang \& Yuan(2013)Yang and Yuan]{yang2013linearized}
Yang, J. and Yuan, X.
\newblock Linearized augmented lagrangian and alternating direction methods for
  nuclear norm minimization.
\newblock \emph{Mathematics of computation}, 82\penalty0 (281):\penalty0
  301--329, 2013.

\bibitem[Yuan et~al.(2018)Yuan, Sun, and Toh]{pmlr-v80-yuan18a}
Yuan, Y., Sun, D., and Toh, K.-C.
\newblock An efficient semismooth {N}ewton based algorithm for convex
  clustering.
\newblock In Dy, J. and Krause, A. (eds.), \emph{Proceedings of the 35th
  International Conference on Machine Learning}, volume~80 of \emph{Proceedings
  of Machine Learning Research}, pp.\  5718--5726, Stockholmsmässan, Stockholm
  Sweden, 10--15 Jul 2018. PMLR.

\bibitem[Zhao et~al.(2010)Zhao, Sun, and Toh]{zhao2010newton}
Zhao, X.-Y., Sun, D., and Toh, K.-C.
\newblock A newton-cg augmented lagrangian method for semidefinite programming.
\newblock \emph{SIAM Journal on Optimization}, 20\penalty0 (4):\penalty0
  1737--1765, 2010.

\bibitem[Zhou \& So(2017)Zhou and So]{zhou2017unified}
Zhou, Z. and So, A. M.-C.
\newblock A unified approach to error bounds for structured convex optimization
  problems.
\newblock \emph{Mathematical Programming}, 165\penalty0 (2):\penalty0 689--728,
  2017.

\bibitem[Zou(2006)]{zou2006adaptive}
Zou, H.
\newblock The adaptive lasso and its oracle properties.
\newblock \emph{Journal of the American statistical association}, 101\penalty0
  (476):\penalty0 1418--1429, 2006.

\bibitem[Zou \& Hastie(2005)Zou and Hastie]{zou2005regularization}
Zou, H. and Hastie, T.
\newblock Regularization and variable selection via the elastic net.
\newblock \emph{Journal of the Royal Statistical Society: Series B (Statistical
  Methodology)}, 67\penalty0 (2):\penalty0 301--320, 2005.

\end{thebibliography}
\bibliographystyle{icml2019}

\onecolumn
\appendix
\section{Proofs}
\subsection{Proofs for convergence of augmented Lagrangian method }
Recall the definition of metric subregularity.
\begin{define}
	A multi-valued mapping $F:\mathcal{X}\rightrightarrows\mathcal{Y}$ is said to be metrically subregular at $\bar{x}\in\mathcal{X}$ for $\bar{y}\in\mathcal{Y}$ with modulus $\kappa\geq0$ where $(\bar{x},\bar{y})\in\text{gph}(F)$, if there exist neighborhoods $\mathcal{E}_1$ of $\bar{x}$ and $\mathcal{E}_2$ of $\bar{y}$ such that
	\begin{align*}
	\text{dist}(x,F^{-1}(\bar{y}))\leq\kappa\text{dist}(\bar{y},F(x)\cap\mathcal{E}_2),\qquad\forall x\in\mathcal{E}_1.
	\end{align*}
\end{define}
We give the following proposition from \cite{aragon2008characterization} to provide a relatively easy way to check the metrically subregularity of closed proper convex function.
\begin{prop}
	Denote $\mathcal{H}$ as the Hilbert space. Let $f:\mathcal{H}\rightarrow(-\infty,+\infty]$ be a proper lower semicontinuous convex function and $(\bar{x},\bar{s})\in\text{gph}(\partial f)$. Then $\partial f$ is metrically subregular at $\bar{x}$ for $\bar{s}$ if and only if the following holds for $\forall x\in\mathcal{E}$:
	\begin{align}
	f(x)\geq f(\bar{x})+\langle\bar{s},x-\bar{x}\rangle+\kappa\text{dist}^{2}(x,(\partial f)^{-1}(\bar{s})),
	\end{align}
	where constant $\kappa>0$ and $\mathcal{E}$ is a neighborhood of $\bar{x}$
\end{prop}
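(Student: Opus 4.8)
The plan is to prove the two implications of the equivalence separately, after a reduction to a normalized situation. By replacing $f$ with the shifted function $\tilde{f}(x):=f(x+\bar{x})-f(\bar{x})-\langle\bar{s},x-\bar{x}\rangle$, I may assume $\bar{x}=0$, $\bar{s}=0$ and $f(0)=0$; then $0\in\partial f(0)$, so $0=\min f$, the set $S:=(\partial f)^{-1}(0)=\argmin f$ is closed and convex, and the asserted inequality becomes the quadratic growth condition $f(x)\geq\kappa\,\text{dist}^2(x,S)$ on a neighborhood of $0$. Throughout I would use the projection $\pi(x):=P_S(x)$ (well defined since $S$ is closed convex) and write $d(x):=\text{dist}(x,S)=\|x-\pi(x)\|$.

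For the direction \emph{growth $\Rightarrow$ subregularity}, I would argue directly from the subgradient inequality. Fix $x$ near $0$ and any $s\in\partial f(x)$. Applying the subgradient inequality at $x$ to the point $\pi(x)\in S$, where $f$ vanishes, gives $0=f(\pi(x))\geq f(x)+\langle s,\pi(x)-x\rangle\geq\kappa\,d(x)^2-\|s\|\,d(x)$, whence $d(x)\leq\|s\|/\kappa$. Since this holds for every $s\in\partial f(x)$, taking the infimum over $s$ yields $d(x)\leq\kappa^{-1}\text{dist}(0,\partial f(x))$, and a fortiori $d(x)\leq\kappa^{-1}\text{dist}(0,\partial f(x)\cap\mathcal{E}_2)$ because $\partial f(x)\cap\mathcal{E}_2\subseteq\partial f(x)$. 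This is exactly metric subregularity of $\partial f$ at $0$ for $0$ with modulus $\kappa^{-1}$, and the argument is short and self-contained.

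The substantive direction is \emph{subregularity $\Rightarrow$ growth}, and this is where I expect the main difficulty. Here metric subregularity supplies the first-order slope bound $\text{dist}(0,\partial f(x))\geq\kappa^{-1}d(x)$ on a neighborhood, and the goal is to upgrade it to the second-order estimate. My plan is to reduce to a one-dimensional differential inequality along a descent path from $x$ to $S$. Using the convex (Brezis) subgradient flow $\dot{\gamma}=-s^{0}(\gamma)$, where $s^{0}(\gamma)$ is the minimal-norm subgradient, one has $\frac{d}{d\tau}f(\gamma)=-\|s^{0}(\gamma)\|^2$, while $\tau\mapsto\text{dist}(\gamma(\tau),S)$ is nonincreasing with $|\frac{d}{d\tau}\text{dist}(\gamma,S)|\leq\|s^{0}(\gamma)\|$. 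Writing $F$ and $D$ for $f(\gamma)$ and $\text{dist}(\gamma,S)$ as functions of $\tau$, these relations combine with the slope bound to give $\frac{dF}{dD}=\frac{\|s^{0}(\gamma)\|^2}{|D'|}\geq\|s^{0}(\gamma)\|\geq\kappa^{-1}D$; integrating from $D=0$, where $F\to 0$ as $\gamma$ reaches $S$, up to $D=d(x)$ then produces $f(x)\geq\frac{1}{2\kappa}d(x)^2$, the desired growth.

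The hard part will be making this descent argument rigorous in the nonsmooth convex setting: justifying existence and absolute continuity of the flow, the chain rule $\frac{d}{d\tau}f(\gamma)=-\|s^{0}(\gamma)\|^2$, the monotonicity of $\tau\mapsto\text{dist}(\gamma(\tau),S)$, and the limits $F\to 0$, $D\to 0$. I would either invoke the standard theory of maximal monotone evolution equations for these facts, or, to stay elementary, replace the flow by a discrete chain of steepest-descent (proximal) steps $x=x_0,x_1,\dots$ and sum the resulting telescoping inequalities before passing to the limit. An alternative route that sidesteps the flow is to pass to the Moreau envelope $e_\lambda f$, which is $C^1$, shares the minimizer set $S$, inherits metric subregularity of its gradient, and whose quadratic growth is equivalent to that of $f$; its smoothness then makes the integration along the classical gradient curve routine. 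In all cases the crux is converting the first-order slope bound into the quadratic estimate and pinning down the correct constant.
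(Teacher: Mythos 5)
The paper offers no proof of this proposition at all: it is imported verbatim from Arag\'on Artacho and Geoffroy (2008) as a known characterization, so there is no in-paper argument to compare yours against. Your sketch is a sound reconstruction of how such a result is actually proved. The normalization and the easy direction (growth implies subregularity, via the subgradient inequality at $x$ evaluated at $\pi(x)$, giving $\kappa\,d(x)^2\le\|s\|\,d(x)$) are correct and essentially complete. For the substantive direction your subgradient-flow route does work: the pointwise estimate $-F'=\|s^{0}(\gamma)\|^{2}\ge\kappa^{-1}D\,\|s^{0}(\gamma)\|\ge -\tfrac{1}{2\kappa}(D^{2})'$ integrates directly (avoiding the change of variables $dF/dD$, which is delicate where $D'=0$) to $f(x)\ge\tfrac{1}{2\kappa}\,\mathrm{dist}^{2}(x,S)$, and the supporting facts about the Br\'ezis flow that you list are all standard; the proofs in the literature more commonly run through Ekeland's variational principle or a chain of proximal steps, which is lighter machinery, so your route is a genuinely different and somewhat heavier one, though it produces an explicit constant. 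Two points you gloss over and should pin down: (i) metric subregularity only controls $\mathrm{dist}(0,\partial f(y)\cap\mathcal{E}_2)$, not $\|s^{0}(y)\|=\mathrm{dist}(0,\partial f(y))$, so for trajectory points where the minimal-norm subgradient lies outside $\mathcal{E}_2$ you must supply the slope bound $\|s^{0}(y)\|\ge c\,D(y)$ separately (trivially, from $\|s^{0}(y)\|\ge\varepsilon_2$ and boundedness of $D$ on the neighborhood, at the price of a worse constant); and (ii) you need the trajectory to stay inside the neighborhood where the slope bound is valid, which follows from the contraction property $\|\gamma(\tau)-\pi(x)\|\le d(x)$ since $\pi(x)\in S$ is a stationary point of the flow. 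With those repairs the argument is complete.
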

Then we give the definition of boundedly linearly regularity for a collection of $m$ closed convex sets $\{C_1,\dots,C_m\}$ from \cite{bauschke1996projection}.
\begin{define}
	For $m$ closed convex sets $\{C_1,\dots,C_m\}$ which belong to $\mathcal{X}$. Suppose the intersection $C:=C_1\cap C_2\cap\dots\cap C_m$ is non-empty. We say that collection $\{C_1,\dots,C_m\}$ is boundedly  linearly regular if for every bounded set $\mathcal{S}\subseteq\mathcal{X}$, there exist a constant $\kappa>0$ such that for $\forall x\in\mathcal{S}$,
	\begin{align*}
	\text{dist}(x,C)\leq\kappa\max\{\text{dist}(x,C_1),\dots,\text{dist}(x,C_m)\}.
	\end{align*}
\end{define}
From corollary 3 in \cite{bauschke1999strong}, we have the following sufficient condition to guarantee the boundedly linearly regularity.
\begin{prop}
	Let $C_1,\dots,C_m$ be closed convex sets in $\mathcal{X}$. Suppose $C_{r+1},\dots,C_m$ are polyhedral for some $r\in\{0,\dots,m-1\}$. We say that the collection $\{C_1,\dots,C_m\}$ is boundedly linearly regularity if 
	\begin{align*}
	\bigcap_{i=1}^{r}\text{ri}(C_i)\cap\bigcap_{i=r+1}^{m}C_{i}\neq\emptyset.
	\end{align*}
\end{prop}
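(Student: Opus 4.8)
The plan is to derive this sufficient condition for bounded linear regularity from two classical ingredients: Hoffman's global error bound for polyhedral systems and the normal-cone sum rule made available by the relative-interior qualification. First I would collapse the polyhedral part of the family into a single set. Writing $P := \bigcap_{i=r+1}^{m} C_i$, a finite intersection of polyhedra is again polyhedral, and Hoffman's lemma supplies a \emph{global} constant $\kappa_P$ such that $\text{dist}(x,P)\le\kappa_P\max_{i>r}\text{dist}(x,C_i)$ for every $x$. Hence it suffices to show that the reduced collection $\{C_1,\dots,C_r,P\}$ is boundedly linearly regular under the hypothesis $\bigcap_{i=1}^{r}\text{ri}(C_i)\cap P\neq\emptyset$, after which the two regularity constants compose.

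The core step is to verify the strong conical hull intersection property at every point $\bar{x}$ of the intersection $C:=\bigcap_{i=1}^{r}C_i\cap P$, namely $N_{C}(\bar{x})=\sum_{i=1}^{r}N_{C_i}(\bar{x})+N_{P}(\bar{x})$. Here the relative-interior condition plays exactly the role of a constraint qualification in Rockafellar's calculus of normal cones, and the decisive feature is polyhedrality: because $P$ is polyhedral the qualification is needed only in the form $\bigcap_{i}\text{ri}(C_i)\cap P\neq\emptyset$ (mere membership in $P$), rather than with the relative interior of $P$. I would handle the several general convex sets either by applying the two-set sum rule one factor at a time, at each stage using that the running intersection still meets the relative interior of the next set, or equivalently by the product-space/diagonal reduction that turns the $r$ general sets into a single factor intersected with the polyhedral diagonal subspace.

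Finally I would upgrade this pointwise normal-cone identity to the uniform inequality $\text{dist}(x,C)\le\kappa\max_i\text{dist}(x,C_i)$ valid on a prescribed bounded set $S$. The polyhedral factor is already controlled globally by Hoffman; for the non-polyhedral factors a compactness argument over the closure of $S$, combined with the strong CHIP, yields a single constant $\kappa$ uniform over $S$. I expect this last passage to be the main obstacle, since bounded linear regularity is a uniform quantitative statement that does not follow from pointwise regularity alone. Polyhedrality is indispensable precisely because it is the only source of a globally (rather than merely locally) valid linear error bound, and it is also what permits the relative-interior qualification to be imposed with membership rather than relative-interior membership on the sets $C_{r+1},\dots,C_m$.
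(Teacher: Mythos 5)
The paper does not actually prove this proposition: it is quoted verbatim as Corollary~3 of Bauschke, Borwein and Li (1999) (``Strong conical hull intersection property, bounded linear regularity, \dots''), so the relevant comparison is with that known proof. Your first step is sound: collapsing $C_{r+1},\dots,C_m$ into a single polyhedron $P$, invoking Hoffman's error bound for the global linear regularity of the polyhedral subfamily, and composing the two constants is exactly the standard reduction. The normal-cone identity (strong CHIP) under the qualification $\bigcap_{i\le r}\text{ri}(C_i)\cap P\neq\emptyset$ is also a correct classical fact.

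The genuine gap is your third step, and it is not a technical loose end but the entire content of the theorem. Strong CHIP is a \emph{consequence} of bounded linear regularity (bounded linear regularity $\Rightarrow$ property (G) $\Rightarrow$ strong CHIP in the Bauschke--Borwein--Li hierarchy), not a sufficient condition for it; the implication you need runs in the opposite direction and is false in general. In particular, the pointwise identity $N_C(\bar{x})=\sum_i N_{C_i}(\bar{x})+N_P(\bar{x})$ does not yield a local linear error bound $\text{dist}(x,C)\le\kappa\max_i\text{dist}(x,C_i)$ near $\bar{x}$ — the error bound is a metric (quantitative) statement strictly stronger than the normal-cone (tangential) one — so there is nothing for your compactness argument over $\overline{S}$ to patch together. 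You flag this passage yourself as ``the main obstacle,'' but leaving it open means the proposal does not prove the proposition. The actual argument works directly with distance functions: one proves the two-set base case ($D$ closed convex, $P$ polyhedral, $\text{ri}(D)\cap P\neq\emptyset$ implies $\{D,P\}$ boundedly linearly regular) by a direct quantitative/contradiction argument, and then induces on the number of non-polyhedral sets using the composition rule for error bounds; strong CHIP never enters as an intermediate step. To repair your write-up you would either reproduce that argument or simply cite the result, as the paper does.
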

Here we give following lemma to show the invariant property of $Ax$ over $\mathcal{T}^{-1}_{\phi}(0)$, the proof follows from \cite{luo1992linear} and we omit here.
\begin{lem}
	$Ax$ is invariant over $x\in\mathcal{T}_{\phi}^{-1}(0)$, which means that if $x,x'\in\mathcal{T}_{\phi}^{-1}(0)$, then $Ax=Ax'$.
\end{lem}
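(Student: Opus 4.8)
The plan is to use the fact (already built into the notation of Theorem~\ref{thr2}, which refers to ``the solution set $\mathcal{T}_{\phi}^{-1}(0)$ to (P)'') that $\mathcal{T}_{\phi}^{-1}(0)$ is exactly the optimal set of the primal problem (P), and then to derive the invariance of $Ax$ from strict convexity of the quadratic loss in the variable $Ax$; the $\ell_1$ term will contribute only a harmless nonnegative quantity.

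First I would make the identification precise by carrying out the inner infimum in $\phi(x)=\inf_{z}l(z;x)$ over $u,v,w$ separately, which (via $h=h^{**}$ and $p=p^{**}$) gives $\phi(x)=-f(x)$ on the affine set $\{x:Bx=d\}$ and $\phi(x)=-\infty$ elsewhere. Hence $\mathcal{T}_{\phi}^{-1}(0)$ is the minimizer set of $f(x)=\tfrac12\|Ax-b\|^2+\lambda\|x\|_1$ over $\{x:Bx=d\}$, which is convex and nonempty by assumption.

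Next I would run the standard segment argument. Take $x,x'\in\mathcal{T}_{\phi}^{-1}(0)$ sharing the optimal value $f^{*}$ and set $x_{t}=(1-t)x+tx'$. Linearity of $B$ gives $Bx_{t}=d$, so $x_t$ stays feasible, and convexity of $f$ together with optimality forces $f(x_{t})=f^{*}$ for every $t\in[0,1]$, so the entire segment is optimal. Writing $a=Ax-b$, $a'=Ax'-b$ and equating $f(x_{t})$ with $(1-t)f^{*}+tf^{*}$, I would separate the quadratic and $\ell_1$ contributions into the identity
\begin{align*}
\tfrac12\|(1-t)a+ta'\|^{2}-\tfrac{1-t}{2}\|a\|^{2}-\tfrac{t}{2}\|a'\|^{2}=\lambda\big[(1-t)\|x\|_{1}+t\|x'\|_{1}-\|x_{t}\|_{1}\big].
\end{align*}
A short expansion shows the left side equals $-\tfrac{t(1-t)}{2}\|A(x-x')\|^{2}$, while the right side is nonnegative by convexity of $\|\cdot\|_{1}$. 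For any $t\in(0,1)$ the left side is $\le 0$ and the right side is $\ge 0$, so both must vanish; hence $\|A(x-x')\|=0$, i.e.\ $Ax=Ax'$.

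The argument is short, and the subtle point is conceptual rather than computational: it is $Ax$, not $x$, that is invariant, because the $\ell_1$ penalty is convex but not strictly convex and leaves $x$ itself possibly non-unique. The strict convexity driving the conclusion lives only in the $A(x-x')$ direction, which is precisely why the quadratic convexity gap isolates $\|A(x-x')\|^{2}$ — the same mechanism underlying the error-bound theory of \cite{luo1992linear}.
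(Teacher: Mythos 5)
Your proof is correct, and it is essentially the argument the paper intends: the paper omits the proof and defers to \cite{luo1992linear}, where the invariance of $Ax$ over the optimal set is obtained by exactly this mechanism — convexity forces the whole segment between two minimizers to be optimal, and the strict convexity of $\tfrac12\|\cdot-b\|^2$ in the $Ax$ variable then isolates and kills $\|A(x-x')\|^2$. Your preliminary identification of $\mathcal{T}_{\phi}^{-1}(0)$ with the solution set of (P) via $\phi(x)=-f(x)-\delta_{\{Bx=d\}}(x)$ is also consistent with how the paper uses this set, so no gap remains.
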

Combine above lemma, recall the primal problem (P) and KKT system (\ref{A.2}) as follows,
\begin{align}\label{A.2}
\begin{split}
\begin{cases}
0\in\partial h^{*}(u)-Ax, \\
0\in\partial p^{*}(w)-x, \\
0 = Bx-d, \\
0= A^Tu-B^{T}v+w,
\end{cases} \quad(u,v,w;x)\in\mathcal{Z}\times\mathcal{X},
\end{split}
\end{align}
it is easy to check that the following observation holds.
\begin{prop}
	Let $(\bar{u},\bar{v},\bar{w},\bar{x})$ be a solution to KKT system (\ref{A.2}), then we can represent the optimal solution set $\mathcal{T}^{-1}_{\phi}(0)$ of (P) as
	\begin{align*}
	\ &\mathcal{T}_{\phi}^{-1}(0) \\
	=\ &\{x\in\mathcal{X}|Ax=\bar{\xi},0=Bx-d,0\in\bar{\eta}+\partial p(x)-B^{T}\bar{v}\}\\ 
	=\ &\mathcal{D}_{1}\cap\mathcal{D}_2\cap\mathcal{D}_3,
	\end{align*}
	where $\bar{\xi}:=A\bar{x}$, $\bar{\eta}=A^{T}\nabla h(\bar{\xi})$, $\mathcal{D}_{1}=\{x\in\mathcal{X}|Ax=\bar{\xi}\}$, $\mathcal{D}_{2}=\{x\in\mathcal{X}|Bx=d\}$ and $\mathcal{D}_{3}=\{x\in\mathcal{X}|0\in\bar{\eta}+\partial p(x)-B^{T}\bar{v}\}$.
\end{prop}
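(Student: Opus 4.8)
The plan is to prove the two set inclusions separately, relying on the characterization of $\mathcal{T}_\phi^{-1}(0)$ as the optimal solution set of problem (\ref{P}) together with the $(x,v)$-form of the KKT system derived in Section \ref{sec3}: namely, $x$ solves (\ref{P}) if and only if there is a multiplier $v$ with $0\in A^{T}\nabla h(Ax)-B^{T}v+\partial p(x)$ and $Bx=d$. The fixed data in the statement are $\bar{\xi}=A\bar{x}$, $\bar{\eta}=A^{T}\nabla h(\bar{\xi})$, and the dual component $\bar{v}$ coming from the reference KKT solution $(\bar{u},\bar{v},\bar{w},\bar{x})$ of (\ref{A.2}).

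For the inclusion $\mathcal{T}_\phi^{-1}(0)\subseteq\mathcal{D}_1\cap\mathcal{D}_2\cap\mathcal{D}_3$, I would take any optimal $x$. Feasibility $Bx=d$ immediately places $x\in\mathcal{D}_2$. The preceding Lemma on invariance of $Ax$ over $\mathcal{T}_\phi^{-1}(0)$ gives $Ax=A\bar{x}=\bar{\xi}$, so $x\in\mathcal{D}_1$; in particular $A^{T}\nabla h(Ax)=A^{T}\nabla h(\bar{\xi})=\bar{\eta}$, since $\nabla h$ is evaluated at the common point $\bar{\xi}$. It then remains to verify the subdifferential inclusion defining $\mathcal{D}_3$ with the \emph{specific} multiplier $\bar{v}$. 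Here I would invoke the product structure of the KKT solution set for convex problems: under strong duality, any primal optimal point paired with any dual optimal multiplier forms a saddle point of the Lagrangian $l$, so $(x,\bar{v})$ satisfies the KKT system. Substituting $A^{T}\nabla h(Ax)=\bar{\eta}$ into $0\in A^{T}\nabla h(Ax)-B^{T}\bar{v}+\partial p(x)$ yields $0\in\bar{\eta}+\partial p(x)-B^{T}\bar{v}$, i.e. $x\in\mathcal{D}_3$.

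For the reverse inclusion, I would take $x\in\mathcal{D}_1\cap\mathcal{D}_2\cap\mathcal{D}_3$ and exhibit $v=\bar{v}$ as a valid multiplier. Since $x\in\mathcal{D}_1$ means $Ax=\bar{\xi}$, we again have $A^{T}\nabla h(Ax)=\bar{\eta}$, so the membership $0\in\bar{\eta}+\partial p(x)-B^{T}\bar{v}$ rewrites as $0\in A^{T}\nabla h(Ax)-B^{T}\bar{v}+\partial p(x)$; together with $Bx=d$ from $\mathcal{D}_2$ this is exactly the KKT system for (\ref{P}) with multiplier $\bar{v}$. Hence $x$ is optimal, i.e. $x\in\mathcal{T}_\phi^{-1}(0)$, and the two inclusions give the claimed equality.

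The only nonroutine step is the forward direction's claim that the fixed $\bar{v}$ serves as a multiplier for \emph{every} primal optimal $x$, rather than merely some problem-dependent $v$; I expect this to be the main obstacle. The clean way to settle it is the saddle-point (product) structure of convex KKT pairs, which decouples the primal optimal set from the multiplier set. The invariance Lemma for $Ax$ then does the complementary work of pinning down $\nabla h(Ax)$ so that the first-order term collapses to the constant $\bar{\eta}$, which is precisely what makes $\mathcal{D}_1,\mathcal{D}_2,\mathcal{D}_3$ capture the KKT conditions exactly; everything else is direct substitution.
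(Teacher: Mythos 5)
Your proof is correct and follows exactly the route the paper intends: the paper states this proposition without proof (``it is easy to check''), pointing only to the invariance lemma for $Ax$ and the KKT system, and you fill in the details in precisely that way, using the product (saddle-point) structure of the KKT solution set to justify that the fixed multiplier $\bar{v}$ works for every primal optimal $x$. The only cosmetic slip is that the saddle-point argument should be phrased for the Lagrangian of (\ref{P}) (or equivalently for the KKT system (\ref{A.2}), whose solution set the paper characterizes as the product of the primal and dual optimal sets) rather than literally for the paper's $l$, which is the Lagrangian of (\ref{D}); this does not affect the validity of the argument.
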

In the next step, for the constrained Lasso model, we have that $h(x)=\frac{1}{2}\|x-b\|^2$ and $p(x)=\lambda\|x\|_1$. Then we give key properties of $h(\cdot)$ and $p(\cdot)$ to help establish the metric subregularity of $\mathcal{T}_{\phi}$.
\begin{prop} \label{prop4}
	The following properties hold:\\
	(a) For any $r\in\text{dom}(h)$, there exists a constant $\kappa_1>0$ and neighborhood $\mathcal{E}_3$ of $r$ such that for $\forall r'\in\mathcal{E}_3$
	\begin{align}\label{A.3}
	h(r')\geq h(r)+\langle \nabla h(r),r'-r\rangle+\kappa_1\|r-r'\|^2,
	\end{align}
	(b) $\partial p$ is metric subregular with constant $\kappa_2>0$, i.e., for any $(x,s)\in\text{gph}(\partial p)$, there exists a constant $\kappa_2>0$ and a neighborhood $\mathcal{E}_4$ of $x$ such that for $\forall x'\in\mathcal{E}_4$
	\begin{align}
	p(x')\geq p(x)+\langle s,x'-x\rangle+\kappa_2\text{dist}^{2}(x',(\partial p)^{-1}(s)).
	\end{align}
\end{prop}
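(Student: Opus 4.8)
The plan is to handle the two parts separately, since part (a) is a one-line identity while part (b) carries the real content.

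For part (a), recall that in the constrained Lasso model $h(r)=\frac{1}{2}\|r-b\|^2$ is a quadratic with $\nabla h(r)=r-b$ and constant Hessian $\nabla^2 h\equiv\mathbf{I}$. Hence its second-order Taylor expansion is exact: for all $r,r'$,
\[
h(r')=h(r)+\langle\nabla h(r),r'-r\rangle+\tfrac{1}{2}\|r'-r\|^2 .
\]
Thus (a) holds globally, and a fortiori on any neighborhood $\mathcal{E}_3$, with $\kappa_1=\tfrac{1}{2}$; equivalently, $h$ is $1$-strongly convex. No localization is actually needed here.

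For part (b), observe that the displayed inequality is exactly the second-order growth characterization of metric subregularity of $\partial p$ provided by the proposition from \cite{aragon2008characterization} quoted above. So it suffices to prove that, for the penalty $p(x)=\lambda\|x\|_1$, the subdifferential mapping $\partial p$ is metrically subregular at every $x$ for every $s\in\partial p(x)$, and then invoke that characterization to recover the stated bound with some $\kappa_2>0$ on a neighborhood $\mathcal{E}_4$ of $x$. To establish this metric subregularity I would exploit that $p=\lambda\|\cdot\|_1$ is a polyhedral (piecewise linear) convex function, so $\partial p$ is the explicit separable map whose graph is a finite union of polyhedral sets; hence $\partial p$, and therefore its inverse $(\partial p)^{-1}$, is a polyhedral multifunction. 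By Robinson's theorem on polyhedral multifunctions, $(\partial p)^{-1}$ is locally upper Lipschitz (calm) at every point of its graph, and calmness of $(\partial p)^{-1}$ at $s$ for $x$ is precisely metric subregularity of $\partial p$ at $x$ for $s$. A more self-contained route sets $g(x'):=p(x')-\langle s,x'\rangle$, notes that $g$ is polyhedral convex with $x\in\argmin g$ (since $s\in\partial p(x)$) and $(\partial p)^{-1}(s)=\argmin g$; Hoffman's error bound for polyhedra then gives a linear growth estimate $g(x')-\min g\geq c\,\text{dist}(x',\argmin g)$, and restricting to a neighborhood where the distance is at most $1$ upgrades this to the quadratic bound $g(x')-\min g\geq c\,\text{dist}^2(x',\argmin g)$, which is (b) after unwinding $g$.

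The main obstacle is entirely in (b): the technical care lies in verifying that $\partial(\lambda\|\cdot\|_1)$ is genuinely a polyhedral multifunction and in the linear-to-quadratic downgrade, which is valid only locally and is exactly why a neighborhood $\mathcal{E}_4$ appears in the statement. Uniformity of the modulus across points causes no difficulty, because the polyhedral decomposition of the graph is finite, so the finitely many Hoffman constants can be absorbed into a single $\kappa_2$.
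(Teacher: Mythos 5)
Your proposal is correct, and for part (b) it takes a genuinely different route from the paper. For (a) the two arguments coincide: the paper notes that $h(r)=\frac{1}{2}\|r-b\|^2$ is $1$-strongly convex, which is exactly your observation that the quadratic Taylor expansion is exact (your constant $\kappa_1=\tfrac12$ is in fact the right one; the paper's claim that $\kappa_1=1$ works overshoots by a factor of two, though this is immaterial). For (b) the paper gives no argument at all beyond a citation: it invokes Proposition 11 of \cite{zhou2017unified}, treating the $\ell_1$ norm as a special case of the nuclear norm, whereas you prove the statement directly from polyhedrality --- either via Robinson's theorem that a polyhedral multifunction such as $\partial(\lambda\|\cdot\|_1)$ is calm everywhere on its graph (equivalently, $\partial p$ is metrically subregular), combined with the second-order growth characterization of \cite{aragon2008characterization} already quoted in the appendix, or, more self-containedly, via Hoffman's error bound applied to the tilted polyhedral function $g=p-\langle s,\cdot\rangle$ followed by the local downgrade from linear to quadratic growth on the set where $\mathrm{dist}(x',\operatorname{argmin}g)\leq 1$. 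Both routes are sound. Yours is more elementary, avoids importing the heavier nuclear-norm machinery (which is needed in \cite{zhou2017unified} precisely because the nuclear norm is \emph{not} polyhedral), and makes the uniformity of the modulus $\kappa_2$ transparent via the finiteness of the polyhedral decomposition of $\mathrm{gph}(\partial p)$; the paper's citation is shorter and would extend verbatim to non-polyhedral regularizers. The only point to tighten in a written version is the Hoffman step: you should state explicitly that the linear growth $g(x')-\min g\geq c\,\mathrm{dist}(x',\operatorname{argmin}g)$ for polyhedral convex $g$ is the weak-sharp-minimum property, valid on a neighborhood of any minimizer, before squaring the distance.
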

\begin{proof}
	(a) Since $h(r)$ is 1-strongly convex, (\ref{A.3}) holds with $\kappa_1=1$. \\
	(b) $\partial(\|x\|_1)$ is metric subregular by Proposition 11 in \cite{zhou2017unified} since $l_1$ norm of vector is a special case of nuclear norm $\|\cdot\|_{*}$ of matrix. \\
\end{proof}
For problem (P), assume there exists at least one optimal solution $\bar{x}\in\mathcal{T}^{-1}_{\phi}(0)$. We say that the second order growth condition for (P) holds at $\bar{x}$ w.r.t set $\mathcal{T}^{-1}_{\phi}(0)$ if the following holds for $\forall x\in\mathcal{E}\cap\{x\in\mathcal{X}|Bx=d\}$:
\begin{align}
f(x)\geq f(\bar{x})+\kappa\text{dist}^{2}(x,\mathcal{T}^{-1}_{\phi}(0)),
\end{align}
where $\kappa>0$ and $\mathcal{E}$ is a neighborhood of $\bar{x}$. From Proposition 1 we get that the metrically subregularity of $\mathcal{T}_{\phi}$ holds at $\bar{x}$ if and only if the second order growth condition holds at $\bar{x}$ w.r.t set $\mathcal{T}^{-1}_{\phi}(0)$. Hence, we can just show the second order growth conditions holds to state that $\mathcal{T}_{\phi}$ is metrically subregular. The following lemma is a  key step to prove the second order growth condition.	
\begin{lem} \label{lem2}
	For $\bar{x}\in\mathcal{T}^{-1}_{\phi}(0)$, the following holds
	\begin{align*}
	\text{dist}(x,\mathcal{D}_2)\leq\kappa\|d-Bx\|_{2},\quad\forall x\in\mathcal{E},
	\end{align*}
	where $\kappa>0$ and $\mathcal{E}$ is a neighborhood of $\bar{x}$.
\end{lem}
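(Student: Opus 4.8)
The plan is to exploit the fact that $\mathcal{D}_2=\{x:Bx=d\}$ is a nonempty affine subspace, so that the claimed inequality is nothing but a Hoffman-type error bound for the linear system $Bx=d$; in fact it will hold globally, and in particular on any neighborhood $\mathcal{E}$ of $\bar{x}$. First I would record that $\mathcal{D}_2$ is nonempty because $\bar{x}\in\mathcal{T}_{\phi}^{-1}(0)\subseteq\mathcal{D}_2$, which forces $d=B\bar{x}\in\mathrm{range}(B)$. Consequently, for every $x$ the residual $Bx-d$ lies in $\mathrm{range}(B)$, which is exactly the condition needed for the Moore--Penrose pseudoinverse to produce the \emph{exact} orthogonal projection onto $\mathcal{D}_2$ rather than merely an approximation.

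Next I would write the orthogonal projection of $x$ onto $\mathcal{D}_2$ explicitly. I claim the nearest point in $\mathcal{D}_2$ to $x$ is $\Pi(x)=x-B^{\dagger}(Bx-d)$. To verify this, one checks that $\Pi(x)\in\mathcal{D}_2$, since $B\Pi(x)=Bx-BB^{\dagger}(Bx-d)=Bx-(Bx-d)=d$ using that $BB^{\dagger}$ is the orthogonal projector onto $\mathrm{range}(B)$ and that $Bx-d\in\mathrm{range}(B)$; and that the displacement $x-\Pi(x)=B^{\dagger}(Bx-d)\in\mathrm{range}(B^{T})=\ker(B)^{\perp}$ is orthogonal to the direction space $\ker(B)$ of the affine set $\mathcal{D}_2$. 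These two facts together identify $\Pi(x)$ as the metric projection, so that
\begin{align*}
\text{dist}(x,\mathcal{D}_2)=\|x-\Pi(x)\|=\|B^{\dagger}(Bx-d)\|\leq\|B^{\dagger}\|\,\|Bx-d\|,
\end{align*}
and the claim holds with $\kappa=\|B^{\dagger}\|$, the operator norm of the pseudoinverse. Under Assumption \ref{asup1} one may instead replace $B^{\dagger}$ by $B^{T}(BB^{T})^{-1}$ and take $\kappa=\|B^{T}(BB^{T})^{-1}\|$, which dispenses with any reference to the pseudoinverse.

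Because this bound is valid for every $x\in\mathcal{X}$, it holds a fortiori on any neighborhood $\mathcal{E}$ of $\bar{x}$, which is all the statement requires. There is essentially no serious obstacle here: the only point deserving care is the verification that $Bx-d$ always lies in $\mathrm{range}(B)$, so that the pseudoinverse formula yields the exact projection; this is guaranteed by the nonemptiness of $\mathcal{D}_2$. Alternatively, the whole lemma follows in a single step from the classical Hoffman error bound for polyhedra, since $\mathcal{D}_2$ is polyhedral and $\|Bx-d\|$ is precisely its feasibility residual.
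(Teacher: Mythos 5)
Your proof is correct, and it takes a genuinely different route from the paper's. The paper works in the product space $\mathcal{X}\times\mathcal{V}$, introducing the two polyhedral sets $C_1=\{(x,r)\,:\,Bx-d=r\}$ and $C_2=\{(x,r)\,:\,r=0\}$, invoking bounded linear regularity of a collection of polyhedra with nonempty intersection (its Proposition 2, from Bauschke--Borwein--Li) to bound $\text{dist}((x,d-Bx),C_1\cap C_2)$ by $\kappa\|d-Bx\|$, and then projecting back down to conclude $\text{dist}(x,\mathcal{D}_2)\leq\kappa\|d-Bx\|$; this machinery is local in spirit (valid on bounded sets), which is why the statement is phrased on a neighborhood $\mathcal{E}$. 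You instead observe that $\mathcal{D}_2$ is a nonempty affine set and compute the metric projection explicitly as $\Pi(x)=x-B^{\dagger}(Bx-d)$, carefully verifying that $Bx-d\in\mathrm{range}(B)$ (so $BB^{\dagger}$ acts as the identity on the residual) and that the displacement lies in $\ker(B)^{\perp}$; this yields the \emph{global} bound with the explicit constant $\kappa=\|B^{\dagger}\|$, which trivially restricts to any neighborhood. Your argument is more elementary and more informative (explicit constant, no restriction to bounded sets), at the cost of being specific to a single affine constraint; the paper's bounded-linear-regularity framework is what it reuses later in the proof of Theorem 1 to handle the intersection $\mathcal{D}_1\cap\mathcal{D}_2\cap\mathcal{D}_3$, where an explicit projection formula is no longer available. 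Both proofs are valid; yours is a clean self-contained alternative for this particular lemma.
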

\begin{proof}
	Define $C_1=\{(x,r)\in\mathcal{X}\times\mathcal{V}|Bx-d=r\}$ and $C_2=\{(x,r)\in\mathcal{X}\times\mathcal{V}|r=0\}$. From Proposition 2, we know that $C_1$ and $C_2$ are boundedly linearly regular since both $C_1$ and $C_2$ are polyhedral and the intersection $\bar{\mathcal{D}}_2=C_1\cap C_2$ has an element $(\bar{x},d-B\bar{x})$ which is non-empty. Hence, we have that there exists a constant $\kappa>0$ and a neighborhood of $\mathcal{E}$ of $\bar{x}$ such that 
	\begin{align*}
	\text{dist}((x,d-Bx),\bar{\mathcal{D}}_2)\leq\kappa(\text{dist}((x,d-Bx),C_1)+\text{dist}((x,d-Bx),C_2))=\kappa\|d-Bx\|_2
	\end{align*}
	where the equality holds since $\text{dist}((x,d-Bx),C_1)=0$. Moreover, it's easy to see that there exists $(x',v')\in\bar{\mathcal{D}}_2$ such that
	\begin{align*}
	\text{dist}((x,d-Bx),\bar{\mathcal{D}}_2)=\sqrt{\|x-x'\|^2+\|d-Bx-v'\|^2}\geq\|x-x'\|_2=\text{dist}(x,\mathcal{D}_2).
	\end{align*}
	Hence we complete the proof. \\
\end{proof}
Based on above results, now we claim that the metrically subregularity of $\mathcal{T}_\phi$ holds at $\bar{x}$ for our problem in Theorem \ref{thr1} where $\bar{x}$ is the optimal solution to (P).
\begin{thr} \label{thr1}
	Assume that $\mathcal{T}^{-1}_l(0)$ is non-empty and there exists $(\bar{u},\bar{v},\bar{w})\in\mathcal{T}_\psi^{-1}(0)$. For $h(\cdot)$ and $p(\cdot)$ chosen as in constrained Lasso model. Then metrically subregularity of $\mathcal{T}_\phi$ holds at $\bar{x}$.
\end{thr}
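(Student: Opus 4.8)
The plan is to reduce the claim to the \emph{second-order growth condition} for (P) and then verify that condition by splitting $f=h\circ A+p$ and using the separate error bounds for $h$ and $p$. Concretely, by the characterization of metric subregularity recorded above (Proposition 1, and the equivalence with second-order growth noted just before Lemma \ref{lem2}), it suffices to produce a constant $\kappa>0$ and a neighborhood $\mathcal{E}$ of $\bar{x}$ with
\begin{align*}
f(x)\geq f(\bar{x})+\kappa\,\text{dist}^2(x,\mathcal{T}_\phi^{-1}(0)),\qquad\forall x\in\mathcal{E}\cap\mathcal{D}_2 .
\end{align*}
Throughout I would keep the decomposition $\mathcal{T}_\phi^{-1}(0)=\mathcal{D}_1\cap\mathcal{D}_2\cap\mathcal{D}_3$ from Proposition 3 in view, with $\mathcal{D}_1=\{x:Ax=\bar{\xi}\}$ and $\bar{\xi}=A\bar{x}$ (well defined by the invariance of $Ax$ over $\mathcal{T}_\phi^{-1}(0)$), $\mathcal{D}_2=\{x:Bx=d\}$, and $\mathcal{D}_3=(\partial p)^{-1}(\bar{s})$ for $\bar{s}:=B^T\bar{v}-\bar{\eta}$, which lies in $\partial p(\bar{x})$ by the primal KKT inclusion $0\in A^T\nabla h(A\bar{x})-B^T\bar{v}+\partial p(\bar{x})$.

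First I would lower-bound $f(x)-f(\bar{x})$ term by term. The quadratic growth of $h$ in Proposition \ref{prop4}(a) (with $\kappa_1=1$), applied at $r=\bar{\xi}$ and $r'=Ax$, gives $h(Ax)-h(\bar{\xi})\geq\langle\nabla h(\bar{\xi}),A(x-\bar{x})\rangle+\|A(x-\bar{x})\|^2$, while the metric subregularity of $\partial p$ in Proposition \ref{prop4}(b), applied at $(\bar{x},\bar{s})$, gives $p(x)-p(\bar{x})\geq\langle\bar{s},x-\bar{x}\rangle+\kappa_2\,\text{dist}^2(x,\mathcal{D}_3)$. Adding these and regrouping, the first-order part collapses to $\langle A^T\nabla h(\bar{\xi})+\bar{s},\,x-\bar{x}\rangle=\langle B^T\bar{v},\,x-\bar{x}\rangle=\langle\bar{v},\,Bx-d\rangle$, which vanishes on $\mathcal{D}_2$ (Lemma \ref{lem2} confirms the constraint term is inactive there, since $\text{dist}(x,\mathcal{D}_2)\leq\kappa\|d-Bx\|=0$). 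Hence, for $x\in\mathcal{E}\cap\mathcal{D}_2$,
\begin{align*}
f(x)-f(\bar{x})\geq\|A(x-\bar{x})\|^2+\kappa_2\,\text{dist}^2(x,\mathcal{D}_3).
\end{align*}

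Next I would convert the right-hand side into $\text{dist}^2(x,\mathcal{T}_\phi^{-1}(0))$. Since $\mathcal{D}_1$ is the affine solution set of $Ax=\bar{\xi}$, Hoffman's bound gives $\|A(x-\bar{x})\|=\|Ax-\bar{\xi}\|\geq c\,\text{dist}(x,\mathcal{D}_1)$, with $c$ the smallest positive singular value of $A$. The three sets $\mathcal{D}_1,\mathcal{D}_2,\mathcal{D}_3$ are all polyhedral --- the decisive point being that $\mathcal{D}_3=(\partial p)^{-1}(\bar{s})$ is an orthant/box-type polyhedron because $p=\lambda\|\cdot\|_1$ is piecewise linear --- and their intersection contains $\bar{x}$, so Proposition 2 yields bounded linear regularity on the bounded set $\mathcal{E}$: $\text{dist}(x,\mathcal{T}_\phi^{-1}(0))\leq\kappa'(\text{dist}(x,\mathcal{D}_1)+\text{dist}(x,\mathcal{D}_2)+\text{dist}(x,\mathcal{D}_3))$. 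On $\mathcal{D}_2$ the middle term drops, so squaring (using $(a+b)^2\leq 2a^2+2b^2$) and combining with the displayed bound gives $\text{dist}^2(x,\mathcal{T}_\phi^{-1}(0))\leq C\,(f(x)-f(\bar{x}))$ for some $C>0$, which is precisely the second-order growth condition and, via Proposition 1, the desired metric subregularity of $\mathcal{T}_\phi$ at $\bar{x}$ for $0$.

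The hard part, I expect, is coordinating the mismatched directions of control: the data-fidelity term delivers quadratic growth only along $\operatorname{range}(A)$ (equivalently, it controls $\text{dist}(x,\mathcal{D}_1)$), while the regularizer delivers a subregularity bound only toward $\mathcal{D}_3$, so neither piece alone sees $\text{dist}(x,\mathcal{T}_\phi^{-1}(0))$. Fitting them together rests on two delicate points: the exact cancellation of the first-order terms on the feasible manifold $\mathcal{D}_2$, which hinges on the KKT identity $B^T\bar{v}-\bar{\eta}\in\partial p(\bar{x})$; and the bounded-linear-regularity gluing of the three polyhedral sets, which crucially uses that $(\partial p)^{-1}(\bar{s})$ is polyhedral --- a property special to the piecewise-linear $\ell_1$ regularizer and the reason the argument would not transfer verbatim to a general closed proper convex $p$.
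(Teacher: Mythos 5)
Your proposal is correct and follows essentially the same route as the paper's proof: reduce to the second-order growth condition via Proposition 1, lower-bound $f(x)-f(\bar x)$ using the quadratic growth of $h$ and the metric subregularity of $\partial p$ with the first-order terms cancelling on $\{Bx=d\}$ by the KKT identity, and glue the three sets $\mathcal{D}_1,\mathcal{D}_2,\mathcal{D}_3$ via Hoffman's bound and bounded linear regularity. The only cosmetic difference is that the paper carries an explicit indicator term $\delta_0(d-Bx)$ with its own growth constant $\kappa_3$, whereas you restrict directly to the feasible set, where that term vanishes anyway.
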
 
\begin{proof}
	For (P), we can rewrite it as 
	\begin{align*}
	\min_{x}\delta_{0}(d-Bx)+h(Ax)+p(x),
	\end{align*}
	where $\delta_{0}(\cdot)$ is an indicator function, i.e. $\delta_{0}(e)=0$ if $e=0$ and $\delta_{0}(e)=\infty$ if $e\neq 0$. Let $\bar{x}\in\mathcal{T}^{-1}_{\phi}(0)$. It is easy to see that the following holds
	\begin{align}
	\delta_{0}(d-Bx)\geq\delta_{0}(d-B\bar{x})+\langle\bar{v},d-Bx-(d-B\bar{x})+\kappa_3\text{dist}^2(d-Bx,0),\quad\forall x\in\mathcal{E},
	\end{align}
	where $\kappa_3>0$ and $\mathcal{E}$ is a neighborhood of $\bar{x}$. Since there exists $(\bar{u},\bar{v},\bar{w})\in\mathcal{T}_\psi^{-1}(0)$, we get that $\{\mathcal{D}_1,\mathcal{D}_2,\mathcal{D}_3\}$ are boundedly linearly rugular and $\mathcal{T}^{-1}_{\phi}(0)=\mathcal{D}_1\cap\mathcal{D}_2\cap\mathcal{D}_3$, by definition we get that 
	\begin{align}
	\begin{split}
	\text{dist}^{2}(x,\mathcal{T}^{-1}_{\phi}(0))\ &=\text{dist}^2(x,\mathcal{D}_1\cap\mathcal{D}_2\cap\mathcal{D}_3) \\
	\ &\leq\kappa_4(\text{dist}^2(x,\mathcal{D}_1)+\text{dist}^2(x,\mathcal{D}_2)+\text{dist}^2(x,\mathcal{D}_3)) \\
	\ &\leq\kappa_5(\|Ax-\bar{\xi}\|^2+\|d-Bx\|^2+\text{dist}^2(x,(\partial p)^{-1}(B^{T}v-\bar{\eta}))),
	\end{split}
	\end{align}
	where $\kappa_4,\kappa_5>0$ and $x\in\mathcal{E}$, the first term of last inequality comes from Hoffman's error bound and second term comes from Lemma \ref{lem2}. Since we choose $h(x)=\frac{1}{2}\|x-b\|^2$ and $p(x)=\lambda\|x\|_{1}$, combine Proposition \ref{prop4} and above results, we can get that for any $x\in\mathcal{E}\cap\{x\in\mathcal{X}|Bx=d\}$,
	\begin{align*}
	f(x) =\ & f(x)+\delta_{0}(d-Bx) \\
	= \ &h(Ax)+p(x)+\delta_{0}(d-Bx) \\
	\geq\ & h(\bar{\xi})+\langle\nabla h(\bar{\xi}),Ax-\bar{\xi}\rangle+\kappa_1\|Ax-\bar{\xi}\|^2 +p(\bar{x})+\langle B^{T}\bar{v}-\bar{\eta},x-\bar{x}\rangle \\
	\ &\kappa_2\text{dist}^2(x,(\partial p)^{-1}(B^{T}\bar{v}-\bar{\eta}))+\delta_{0}(d-B\bar{x})+\langle\bar{v},d-Bx-(d-B\bar{x})+\kappa_3\|d-Bx\|^2 \\
	=\ &f(\bar{x})+\langle A^{T}\nabla h(\bar{\xi})-\bar{\eta},x-\bar{x}\rangle+(\kappa_1\|Ax-\bar{\xi}\|^2+\kappa_2\text{dist}^2(x,(\partial p)^{-1}(B^{T}\bar{v}-\bar{\eta}))+\kappa_3\|d-Bx\|^2) \\
	\geq \ & f(\bar{x})+\min(\kappa_1,\kappa_2,\kappa_3)(\|Ax-\bar{\xi}\|^2+\text{dist}^2(x,(\partial p)^{-1}(B^{T}\bar{v}-\bar{\eta}))+\|d-Bx\|^2) \\
	\geq \ &f(\bar{x})+\dfrac{\min(\kappa_1,\kappa_2,\kappa_3)}{\kappa_5}\text{dist}^{2}(x,\mathcal{T}^{-1}_{\phi}(0)).
	\end{align*}
	Therefore, we prove the second order growth condition holds for (P) at $\bar{x}$ w.r.t $\mathcal{T}^{-1}_{\phi}(0)$. As a consequence,  the metrically subregularity of $\mathcal{T}_\phi$ holds at $\bar{x}$.\\
\end{proof}

\subsection{Proofs for convergence of semismooth Newton method}
Recall that we use semismooth Newton method to solve the following Newton linear system:
\begin{align}\label{A.8}
\nabla\theta(y)=0,\qquad \forall y\in\text{dom}(y),
\end{align}
where $y=(u,v)$.

We prove the local superlinear convergence rate of our semismooth Newton method based on the following propositions. 
\begin{prop} \label{prop5}
	Let $Q\in\mathbb{S}^n$ be a positive semidefinite matrix and $\sigma>0$. Then $BQB^{T}$ is positive definite if and only if 
	\begin{align} \label{4.12}
	\left\langle\begin{bmatrix}
	u \\ v
	\end{bmatrix},\left(\begin{bmatrix}
	\textbf{I}_{m} &  \\
	& \textbf{0}
	\end{bmatrix}+\sigma\begin{bmatrix}
	A \\ -B
	\end{bmatrix}Q\begin{bmatrix}
	A^{T} &-B^{T}
	\end{bmatrix}\right)\begin{bmatrix}
	u \\ v
	\end{bmatrix}\right\rangle >0,
	\end{align}
	for all $(u,v)\in\text{dom}(y)\backslash\{(0,0)\}$.
\end{prop}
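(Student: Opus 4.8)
The plan is to expand the quadratic form on the left-hand side of (\ref{4.12}) and show that its set of zeros is governed precisely by the null space of $BQB^{T}$. Writing $\bar{A}^{T}(u,v)^{T}=A^{T}u-B^{T}v$ and using the explicit leading block, a direct computation gives
\begin{align*}
\left\langle\begin{bmatrix} u \\ v \end{bmatrix},\left(\begin{bmatrix} \textbf{I}_{m} & \\ & \textbf{0} \end{bmatrix}+\sigma\bar{A}Q\bar{A}^{T}\right)\begin{bmatrix} u \\ v \end{bmatrix}\right\rangle=\|u\|^2+\sigma(A^{T}u-B^{T}v)^{T}Q(A^{T}u-B^{T}v).
\end{align*}
Since $Q\succeq 0$, both summands are nonnegative, so the whole expression is nonnegative for every $(u,v)$; the real task is therefore to characterize when it is \emph{strictly} positive for all $(u,v)\neq(0,0)$.

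The key observation I would use is that this form vanishes at $(u,v)$ if and only if both summands vanish, i.e.\ $u=0$ together with $(A^{T}u-B^{T}v)^{T}Q(A^{T}u-B^{T}v)=0$. Substituting $u=0$ collapses the second condition to $v^{T}BQB^{T}v=0$. Because $M:=BQB^{T}$ is symmetric positive semidefinite, I would invoke the standard fact that $v^{T}Mv=0\iff Mv=0$ (via the factorization $M=M^{1/2}M^{1/2}$, so that $v^{T}Mv=\|M^{1/2}v\|^2$). Hence the form vanishes at some nonzero $(u,v)$ exactly when there is a nonzero $v$ with $BQB^{T}v=0$, i.e.\ exactly when $BQB^{T}$ is singular.

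Putting these facts together yields both implications simultaneously: (\ref{4.12}) holds for all $(u,v)\neq(0,0)$ if and only if $\ker(BQB^{T})=\{0\}$, which for a positive semidefinite matrix is equivalent to positive definiteness of $BQB^{T}$. This closes the equivalence without needing to treat the two directions separately.

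The argument is essentially a routine case analysis, so I do not expect a serious obstacle; the only points requiring care are the bookkeeping that the $\textbf{I}_{m}$ block forces $u=0$ at any null vector of the form, and the correct use of the semidefinite identity $v^{T}Mv=0\iff Mv=0$, which is exactly what legitimately lets one pass from the scalar vanishing of the quadratic form to the vector null-space condition defining positive definiteness of $BQB^{T}$. One should also confirm that $\text{dom}(y)$ is the full space $\mathbb{R}^{m}\times\mathbb{R}^{s}$ in the constrained Lasso setting, so that no additional restriction on $(u,v)$ interferes with the stated equivalence.
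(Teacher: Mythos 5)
Your proposal is correct and follows essentially the same route as the paper: both expand the quadratic form into the two nonnegative summands $\|u\|^2$ and $\sigma(A^{T}u-B^{T}v)^{T}Q(A^{T}u-B^{T}v)$ and observe that vanishing forces $u=0$ and then reduces to the quadratic form of $BQB^{T}$ in $v$. The only difference is organizational — you fold both implications into a single null-space characterization (using $v^{T}Mv=0\iff Mv=0$ for $M\succeq0$), whereas the paper proves the forward direction by substituting $(0,v')$ and the reverse direction by contradiction; the substance is identical.
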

\begin{proof}
	For "$\Leftarrow$" case, since (\ref{4.12}) holds for all $(u,v)\in\text{dom}(y)\backslash\{(0,0)\}$, we can always choose $(u,v)=(0,v')$ where arbitrary $v'\in\mathbb{R}^s\backslash\{0\}$, hence $(v')^T(BQB^{T})v'>0$ for all $v'\in\mathbb{R}^s\backslash\{0\}$. Thus $BQB^{T}$ is positive definite.  \\
	For "$\Rightarrow$" case, suppose $BQB^{T}$ is positive definite, we proof it by contradictory. We assume there exists $(u',v')\neq(0,0)$, such that for (\ref{4.12}), we have
	\begin{align}\label{4.13}
	(u')^{T}u'+\sigma\left\langle\begin{bmatrix}
	u' \\ v'
	\end{bmatrix},\left(\begin{bmatrix}
	A \\ -B
	\end{bmatrix}Q\begin{bmatrix}
	A^{T} &-B^{T}
	\end{bmatrix}\right)\begin{bmatrix}
	u' \\ v'
	\end{bmatrix}\right\rangle=0.
	\end{align}
	Since $Q$ is positive semidefinite, we know that $\begin{bmatrix}
	A \\ -B
	\end{bmatrix}Q\begin{bmatrix}
	A^{T} &-B^{T}
	\end{bmatrix}$ is also positive semidefinite. So both first term and second term in (\ref{4.13}) are nonnegative, we must have both term to be zero, which means $u'=0$ and moreover $(v')^T(BQB^{T})v'=0$. Since $BQB^T$ is positive definite, we have $v'=0$. Hence $(u',v')=(0,0)$ which makes the contradiction. This completes the proof of proposition. \\	
\end{proof}
Recall that
$Q\in \partial\text{Prox}_{\sigma\lambda\|x\|_{1}}(x)$ is $Q=\text{diag}(q)$, a diagonal matrix with $i$-th element chosen as 
\begin{align}\label{A.11}
q_{i}=\begin{cases}
1,\qquad\text{if}\quad|x_{i}|>\sigma\lambda, \\
0,\qquad\text{otherwise},
\end{cases}
\end{align}
and set $\mathcal{J}=\{j:|x_{j}|>\sigma\lambda\}$definition of $Q, \mathcal{J}$ and $r$, we can also ensure the positive definiteness of Newton linear system (\ref{A.8}) under the \textit{constraint nondegeneracy condition} in the following proposition.
\begin{prop} \label{prop6}
	Let $(\hat{u},\hat{v})$ be the solution of Newton linear system (\ref{A.8}). Let $\hat{z}=x-\sigma(A^T\hat{u}-B^{T}\hat{v})$ and $\hat{Q}=\text{diag}(\hat{q})\in\partial\text{Prox}_{\sigma\lambda\|x\|_1}(\hat{z})$ with $\hat{q}$ is defined as in (\ref{A.11}). If the constraint nondegeneracy condition, i.e.,
	\begin{align}
	\text{lin}(B\hat{Q})=\mathbb{R}^s
	\end{align} 
	holds at $\hat{z}$ with $\text{lin}(B\hat{Q})$ means the linear space of $B\hat{Q}$. Then 
	\begin{align*}
	\begin{bmatrix}
	\textbf{I}_{d} &  \\
	& \textbf{0}
	\end{bmatrix}+\sigma\begin{bmatrix}
	A \\ -B
	\end{bmatrix}\hat{Q}\begin{bmatrix}
	A^{T} &-B^{T}
	\end{bmatrix}
	\end{align*}
	is positive definite on $\text{dom}(y)$.
\end{prop}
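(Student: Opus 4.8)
The plan is to prove positive definiteness by invoking Proposition \ref{prop5}, which reduces the claim to verifying the quadratic-form inequality (\ref{4.12}) for all $(u,v)\in\text{dom}(y)\backslash\{(0,0)\}$. By that proposition, it suffices to show that $B\hat{Q}B^{T}$ is positive definite, and then separately handle the case where the first block $u$ contributes. So the first step is to show $B\hat{Q}\hat{Q}^{T}B^{T}=B\hat{Q}B^{T}$ is positive definite (using that $\hat{Q}$ is a $0/1$ diagonal projection, so $\hat{Q}=\hat{Q}^2=\hat{Q}\hat{Q}^{T}$), which I will derive directly from the constraint nondegeneracy condition $\text{lin}(B\hat{Q})=\mathbb{R}^s$.

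The key observation is that $\text{lin}(B\hat{Q})=\mathbb{R}^s$ says the linear span of the columns of $B\hat{Q}$ is all of $\mathbb{R}^s$; equivalently, $B\hat{Q}$ has full row rank $s$. Since $\hat{Q}$ is the diagonal projection onto the index set $\mathcal{J}=\{j:|\hat{z}_j|>\sigma\lambda\}$, the matrix $B\hat{Q}$ is simply $B$ with the columns outside $\mathcal{J}$ zeroed out, so its row space equals the row space of the submatrix $B_{\mathcal{J}}$. Hence the condition is equivalent to $B_{\mathcal{J}}$ having full row rank $s$, which makes $B_{\mathcal{J}}B_{\mathcal{J}}^{T}=B\hat{Q}B^{T}$ nonsingular, symmetric positive semidefinite, and therefore positive definite. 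This is the concrete content of the nondegeneracy assumption and the first thing I would spell out.

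With $B\hat{Q}B^{T}\succ0$ established, I would then verify (\ref{4.12}) directly rather than re-deriving it. For arbitrary $(u,v)\in\text{dom}(y)\backslash\{(0,0)\}$, expanding the inner product gives
\begin{align*}
u^{T}u+\sigma\left\langle \begin{bmatrix} u \\ v \end{bmatrix}, \begin{bmatrix} A \\ -B \end{bmatrix}\hat{Q}\begin{bmatrix} A^{T} & -B^{T} \end{bmatrix}\begin{bmatrix} u \\ v \end{bmatrix}\right\rangle = \|u\|^2+\sigma\|\hat{Q}^{1/2}(A^{T}u-B^{T}v)\|^2,
\end{align*}
where I use that $\hat{Q}$ is a positive semidefinite projection admitting a square root $\hat{Q}^{1/2}=\hat{Q}$. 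Both terms are nonnegative, so the sum is zero only if $u=0$ and $\hat{Q}(A^{T}u-B^{T}v)=0$ simultaneously. Substituting $u=0$, the second condition becomes $\hat{Q}B^{T}v=0$, i.e.\ $(B\hat{Q})^{T}v=0$; but full row rank of $B\hat{Q}$ forces $v=0$. Thus $(u,v)=(0,0)$, contradicting our choice, so the quadratic form is strictly positive and positive definiteness follows.

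The main obstacle I anticipate is the precise translation between the abstract nondegeneracy statement $\text{lin}(B\hat{Q})=\mathbb{R}^s$ and the concrete full-row-rank condition, together with carefully justifying the kernel computation for the $u=0$ slice; once the projection structure of $\hat{Q}$ is exploited so that $\hat{Q}=\hat{Q}^{1/2}=\hat{Q}^{T}$, the remaining algebra is routine. An alternative route is to bypass Proposition \ref{prop5} and argue positive definiteness on $\text{dom}(y)$ in one pass via the same square-root factorization, but leaning on Proposition \ref{prop5} keeps the argument short and self-contained.
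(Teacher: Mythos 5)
Your proposal is correct and follows essentially the same route as the paper: exploit the idempotent/projection structure of $\hat{Q}$ so that $B\hat{Q}B^{T}=(B\hat{Q})(B\hat{Q})^{T}$, deduce its positive definiteness from the full-row-rank reading of $\text{lin}(B\hat{Q})=\mathbb{R}^s$, and conclude via Proposition \ref{prop5}. Your additional direct verification of the quadratic form merely unpacks the content of Proposition \ref{prop5} and is consistent with the paper's (much terser) argument.
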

\begin{proof}
	By the definition of $\hat{Q}$ we know that it is a idempotent matrix which means that $\hat{Q}^2=\hat{Q}$. The constraint nondegeneracy conditions implies that $B\hat{Q}B^T=(B\hat{Q})(B\hat{Q})^T$ is positive definite. Then by Proposition \ref{prop5}, we get the desired results. \\
\end{proof}
Now we can establish the superlinear convergence of Algorithm 2.
\begin{thr} \label{thr5}
	Let $\{\hat{u},\hat{v}\}$ be the accumulation point of the sequence $\{(u^{j},v^{j})\}$ generated by Algorithm 2. Assume the constraint nondegeneracy condition holds at $\hat{z}=x-\sigma(A^T\hat{u}-B^{T}\hat{v})$. Then the sequence  $\{(u^j,v^j)\}$ converges to $\{(\hat{u},\hat{v})\}$ and 
	\begin{align*}
	\|(u^{j+1},v^{j+1})-(\hat{u},\hat{v})\|=\mathcal{O}(	\|(u^{j},v^{j})-(\hat{u},\hat{v})\|^{1+\tau}).
	\end{align*}
\end{thr}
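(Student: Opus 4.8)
\textbf{Proof proposal for Theorem \ref{thr5}.}

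The plan is to establish this as a standard instance of local superlinear convergence for the semismooth Newton method, following the template of Theorem 3.5 in \cite{zhao2010newton} or the analogous results in \cite{li2018highly}. The three ingredients I would assemble are: (i) the nonsingularity of the generalized Hessian at the accumulation point, (ii) the strong semismoothness of $\nabla\theta$, and (iii) the resulting quadratic-type error bound that the inexact Newton step inherits. First I would invoke Proposition \ref{prop6}: since the constraint nondegeneracy condition holds at $\hat z=x-\sigma(A^T\hat u-B^T\hat v)$, every element $V\in\hat\partial^2\theta(\hat y)$ of the form (\ref{4.5}) is positive \emph{definite} on $\text{dom}(y)$, hence nonsingular. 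By continuity of the Clarke subdifferential and of the relevant matrix entries, there is a neighborhood of $\hat y$ on which all chosen $V_j$ (and $V_j+\epsilon_j\,\text{blkdiag}(0,\textbf{I}_s)$) remain uniformly positive definite, so their inverses are uniformly bounded. This uniform nonsingularity is what makes the Newton system (\ref{4.6}) solvable with a well-controlled direction.

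Next I would use the strong semismoothness already recorded in the text: $\nabla h^*$ is affine and $\text{Prox}_{\sigma\lambda\|\cdot\|_1}$ is piecewise linear, so $\nabla\theta$ is strongly semismooth, and therefore for $y^j$ near $\hat y$ and $V_j\in\hat\partial^2\theta(y^j)$ one has the semismoothness estimate
\begin{align*}
\|\nabla\theta(y^j)-\nabla\theta(\hat y)-V_j(y^j-\hat y)\|=O(\|y^j-\hat y\|^2).
\end{align*}
Combining this with the uniform bound on $\|V_j^{-1}\|$, the residual tolerance $\min(\bar\eta,\|\nabla\theta(y^j)\|^{1+\tau})$ in the inner CG stopping rule, and the local Lipschitz property $\|\nabla\theta(y^j)\|=\|\nabla\theta(y^j)-\nabla\theta(\hat y)\|=O(\|y^j-\hat y\|)$, a routine manipulation gives that the \emph{full} Newton step ($\alpha_j=1$) satisfies
\begin{align*}
\|y^j+d^j-\hat y\|=O(\|y^j-\hat y\|^{1+\tau}),
\end{align*}
which is precisely the claimed rate once I show the line search eventually accepts the unit step.

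The step I expect to be the main obstacle is showing that the accumulation point is actually a \emph{limit} (local convergence of the whole sequence) and that the Armijo line search in Algorithm \ref{alg2} accepts $\alpha_j=1$ for all large $j$. For the former I would argue that once some iterate enters a small enough neighborhood of $\hat y$, the superlinear contraction above forces the sequence to stay in that neighborhood and converge to $\hat y$; since $\hat y$ was assumed to be an accumulation point, infinitely many iterates enter any neighborhood, so the tail is trapped. For the line search, I would invoke the standard argument that under nonsingularity of the generalized Hessian and strong semismoothness, the descent condition in step 4 of Algorithm \ref{alg2} is satisfied with $l=0$ for all sufficiently large $j$, because the quadratic model matches $\theta$ to second order and $\mu<1/2$. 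Taken together with Theorem \ref{thr4} (which guarantees well-definedness and that accumulation points are optimal), these facts yield both the convergence of $\{(u^j,v^j)\}$ to $(\hat u,\hat v)$ and the order-$(1+\tau)$ rate, completing the proof.
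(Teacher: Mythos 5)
Your proposal follows essentially the same route as the paper: the paper's proof simply invokes Proposition \ref{prop6} to get positive definiteness of the generalized Hessian at the accumulation point under constraint nondegeneracy, and then cites Theorem 3.5 of \cite{zhao2010newton} for the superlinear rate. Your additional details (uniform nonsingularity nearby, the strong semismoothness estimate, acceptance of the unit step, and trapping of the tail near the accumulation point) are precisely the content of that cited theorem, so your argument is a correct expansion of the same proof.
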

\begin{proof}
	Since constraint nondegenerate condition holds, by Proposition \ref{prop6}, $V^{*}\in\hat{\partial}^2\theta(y^{*})$ is positive definite. Hence we can obtain the superlinear convergence result from Theorem 3.5 in \cite{zhao2010newton}.\\
\end{proof}
For a special scenario that sum to zero constraint which means $B=e^{T}$, we have the following corollary.
\begin{cor} \label{cor1}
	For $B=e^{T}$, which is the sum to zero constraint. Assume the optimal solution to Newton linear system is non-zero. Let $\{\hat{u},\hat{v}\}$ be the accumulation point of the sequence $\{(u^{j},v^{j})\}$ generated by Algorithm 2. Then the sequence  $\{(u^j,v^j)\}$ converges to $\{(\hat{u},\hat{v})\}$ with superlinear convergence.
\end{cor}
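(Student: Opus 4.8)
The plan is to read this corollary as the specialization of Theorem \ref{thr5} to the sum-to-zero constraint $B=e^T$, in which case $s=1$. Under that reading, all the convergence machinery (well-definedness, convergence of $\{(u^j,v^j)\}$ to $\{(\hat u,\hat v)\}$, and the superlinear rate) is already supplied by Theorem \ref{thr5}, so the only thing left to verify is that its hypothesis, the \emph{constraint nondegeneracy condition}, holds automatically at $\hat z=x-\sigma(A^T\hat u-B^T\hat v)$ once the stated non-vanishing assumption is in force. Thus the entire proof reduces to checking one scalar condition and then invoking the theorem.

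First I would unpack $\text{lin}(B\hat Q)=\mathbb{R}^s$ for $B=e^T$, $s=1$. Writing $\hat Q=\text{diag}(\hat q)$ with $\hat q_i\in\{0,1\}$ as in (\ref{A.11}), we have that $B\hat Q=e^T\hat Q$ is the $1\times n$ row vector whose $i$-th entry is $\hat q_i$. Its span equals $\mathbb{R}^1=\mathbb{R}^s$ exactly when $B\hat Q\neq 0$, i.e. when some $\hat q_i=1$; by the definition of $\hat q$ this is equivalent to the active set $\mathcal{J}=\{j:|\hat z_j|>\sigma\lambda\}$ being non-empty, i.e. $r\geq 1$. So for the sum-to-zero constraint, constraint nondegeneracy collapses to the single statement that $\mathcal{J}\neq\emptyset$, which is exactly the situation covered by Proposition \ref{prop6} (and hence Proposition \ref{prop5}) guaranteeing positive definiteness of the generalized Hessian.

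The step I expect to be the crux is translating the hypothesis that the \emph{solution is non-zero} into $\mathcal{J}\neq\emptyset$; here it is essential to read the hypothesis as referring to the recovered primal solution, \emph{not} to $(\hat u,\hat v)$ itself. Indeed, if one instead assumed $(\hat u,\hat v)\neq 0$, the argument would fail, since even when $\mathcal{J}=\emptyset$ the first KKT equation $\nabla h^*(\hat u)=A\,\text{Prox}_{\sigma p}(\hat z)=0$ only forces $\hat u=-b$, which is generically non-zero. The correct quantity is the recovered primal iterate: combining the Moreau decomposition $x=\text{Prox}_{\sigma p}(x)+\sigma\,\text{Prox}_{p^*/\sigma}(x/\sigma)$ with the $x$-update of Algorithm \ref{alg1} and the formula $\tilde w=\text{Prox}_{p^*/\sigma}(x/\sigma-\bar A^T\hat y)$ from (\ref{4.2}) shows that the recovered solution equals $\text{Prox}_{\sigma\lambda\|\cdot\|_1}(\hat z)$, the soft-thresholding of $\hat z$. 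Since soft-thresholding annihilates every coordinate with $|\hat z_i|\leq\sigma\lambda$, a non-zero recovered solution forces at least one coordinate with $|\hat z_i|>\sigma\lambda$, i.e. $\mathcal{J}\neq\emptyset$. Consequently $B\hat Q=e^T\hat Q\neq 0$, constraint nondegeneracy holds at $\hat z$, and applying Theorem \ref{thr5} yields the convergence of $\{(u^j,v^j)\}$ to $\{(\hat u,\hat v)\}$ together with the superlinear rate, completing the proof.
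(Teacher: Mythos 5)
Your proposal is correct and follows essentially the same route as the paper: both reduce the constraint nondegeneracy condition for $B=e^{T}$ (so $s=1$) to the statement that the active set satisfies $r\geq 1$, and then invoke Theorem \ref{thr5}. In fact your argument is more careful than the paper's one-line version, since you explicitly justify the step the paper only asserts --- namely that the ``non-zero solution'' hypothesis must be read as the soft-thresholded quantity $\text{Prox}_{\sigma\lambda\|\cdot\|_1}(\hat{z})$ being non-zero (equivalently $\mathcal{J}\neq\emptyset$), rather than as $(\hat{u},\hat{v})\neq 0$, which would not suffice.
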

\begin{proof}
	Since $B=e^{T}$, which means $B$ is full row rank with $\text{rank}(B)=1$. Moreover, when optimal solution to Newton linear system is non-zero, we get that $r\geq 1=s$ and constraint nondegeneracy condition holds. Therefore we know our result holds from Theorem \ref{thr5}, . \\
\end{proof}

\section{Numerical Experiments on Synthetic Data}
For the iteration numbers in our tables, $(p|q)$ denote the outer loop $p$ and total inner iterations $q$, $(-)$ means that the algorithm achieves maximum iteration number $10000$. Moreover, we denote our algorithm as SSNAL. Running time counts in seconds.
\subsection{Sum to zero constraints}
In this scenario, we set $B=e^{T}$ and $d=0$.
\begin{table}[!hbp] 
	\small
	\caption{\small Performance of SSNAL, primal dual method, linearized ALM, ADMM and A-ADMM with sum to zero constraints on synthetic data sets. $d$ is the sample size and $n$ is the dimension of each sample. $\lambda_l$ controls the penalty parameter in (P). 'nnz' denotes the number of nonzeros of the solution obtained by our algorithm. 'opt' is the optimal function value of (P). $\eta_{\text{gap}}$ is the optimal gap. 'a'= our algorithm, 'b' = primal dual method, 'c'= linearized alm, 'd' = ADMM, 'e' = A-ADMM. Running time counts in seconds. }	
	\label{tab1}
	\begin{tabular}{|c|c|c|c|c|c|c|c|c|c|c|c|c|c|}
		\hline  & $\lambda_l$& nnz & opt &\multicolumn{5}{|c|}{ optimal gap, $\eta_{\text{gap}}$} &\multicolumn{5}{|c|}{ running time (iteration number) }   \\
		\hline size $m;n$ & & & & a& b& c& d& e&a&b&c&d&e \\
		\hline\multirow{3}{*}{200;2000} & $10^{-2}$& 189 & 1.3689+4& 3.2-8&6.1+0&2.0+1&2.0-2&1.5-2&\textbf{0.4($10|100$)}&5.3(-)&5.0(-)&6.2(-)&8.3(-)\\
		& $10^{-3}$&193&1.3843+3&3.4-8&5.5+1&8.8+1&1.6-3&1.8-3
		&\textbf{1.0($16|180$)}&6.5(-)&4.6(-)&5.9(-)&7.9(-)\\
		&$10^{-4}$&195&1.3859+2&1.9-9&3.5+1&4.5+2&1.4-4&1.6-4&\textbf{1.3($17|190$)}&6.3(-)&6.7(-)&7.3(-)&6.4(-)   \\
		\hline\multirow{3}{*}{300;3000} & $10^{-2}$& 286 & 3.8649+4& 6.6-8&1.5+1&3.7+1&4.7-1&4.7-1&\textbf{1.2($11|110$)}&19(-)&18(-)&22(-)&25(-)\\
		& $10^{-3}$&289&3.9137+3&1.2-9&1.0+2&1.7+2&1.1-2&1.3-2
		&\textbf{2.3($13|150$)}&18(-)&17(-)&23(-)&22(-)\\
		&$10^{-4}$&291&3.9186+2&9.8-9&8.5+1&1.1+2&1.5-3&1.5-3&\textbf{3.2($16|193$)}&18(-)&25(-)&24(-)&24(-)   \\
		\hline\multirow{3}{*}{500;5000} & $10^{-2}$&471&1.2391+5&2.6-7&2.1+1&7.5+1&7.0+0&7.0+0&\textbf{2.7($11|106$)}&76(-)&80(-)&155(-)&162(-) \\
		& $10^{-3}$&481&1.2555+4&4.7-8&3.3+2&5.6+2&2.6-1&2.6-1&\textbf{3.8($13|153$)}&70(-)&95(-)&158(-)&158(-) \\
		&$10^{-4}$&484&1.2572+3&4.5-9&2.7+2&3.5+2&4.5-3&4.9-3&\textbf{5.3($14|175$)}&78(-)&81(-)&155(-)&159(-)\\
		\hline
		\multirow{3}{*}{800;8000} & $10^{-2}$&746&3.1162+5&1.5-5&8.4+1&3.0+2&7.4+1&7.4+1&\textbf{6.2($10|110$)}&151(-)&149(-)&355(-)&359(-)\\
		& $10^{-3}$&760&3.1602+4&1.2-6&7.7+2&1.3+3&6.4-1&6.2-1&\textbf{10($13|175$)}&153(-)&159(-)&357(-)&361(-)\\
		&$10^{-4}$& 764&3.1647+3&5.4-9&6.4+2&8.4+2&3.0-2&3.4-2&\textbf{15($19|231$)}&157(-)&169(-)&355(-)&363(-) \\
		\hline
		\multirow{3}{*}{1000;10000} & $10^{-2}$&935&5.4143+5&6.4-6&1.5+2&4.8+2&3.3+2&3.3+2&\textbf{9.0($10|101$)}&225(-)&249(-)&551(-)&553(-)\\
		& $10^{-3}$&958&5.4901+4&3.3-7&1.2+3&2.2+3&1.5+0&1.5+0&\textbf{18($15|200$)}&252(-)&263(-)&546(-)&556(-)\\
		&$10^{-4}$&964&5.4978+3&3.3-8&1.1+3&1.5+3&4.5-2&4.5-2&\textbf{23($17|232$})&236(-)&231(-)&550(-)&556(-)  \\
		\hline		
	\end{tabular}
\end{table}
\begin{figure}[!hbp] 
	\centering
	\begin{minipage}[t]{0.26\linewidth}
		\includegraphics[width=1.0\textwidth]{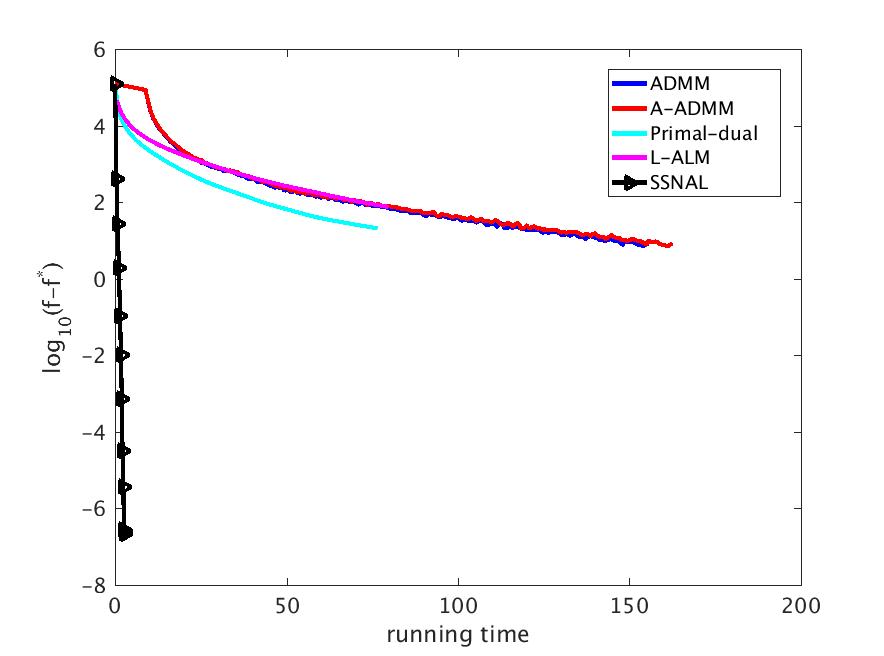}
	\end{minipage}
	\begin{minipage}[t]{0.26\linewidth}
		\includegraphics[width=1.0\textwidth]{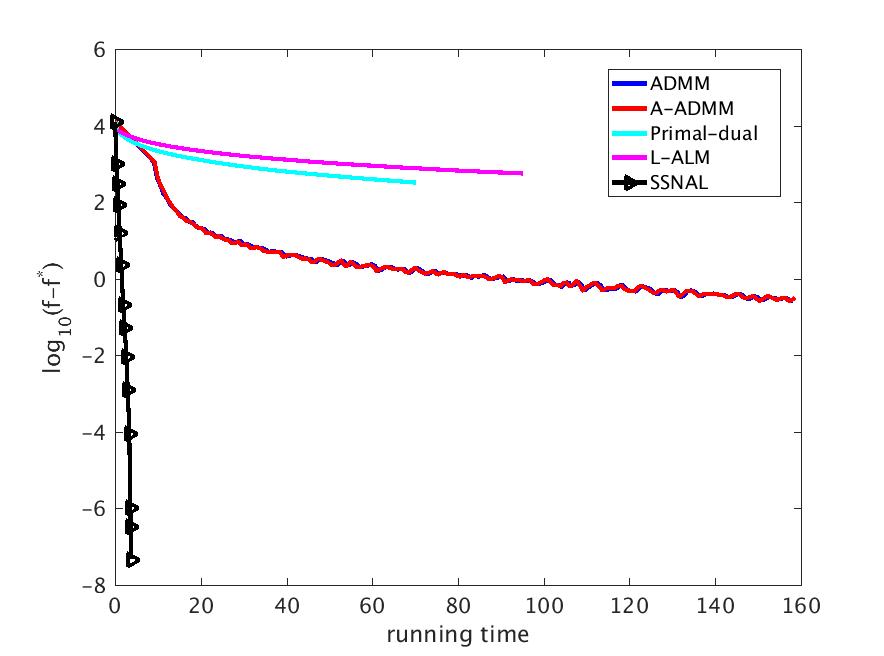}
	\end{minipage}
	\begin{minipage}[t]{0.26\linewidth}
		\includegraphics[width=1.0\textwidth]{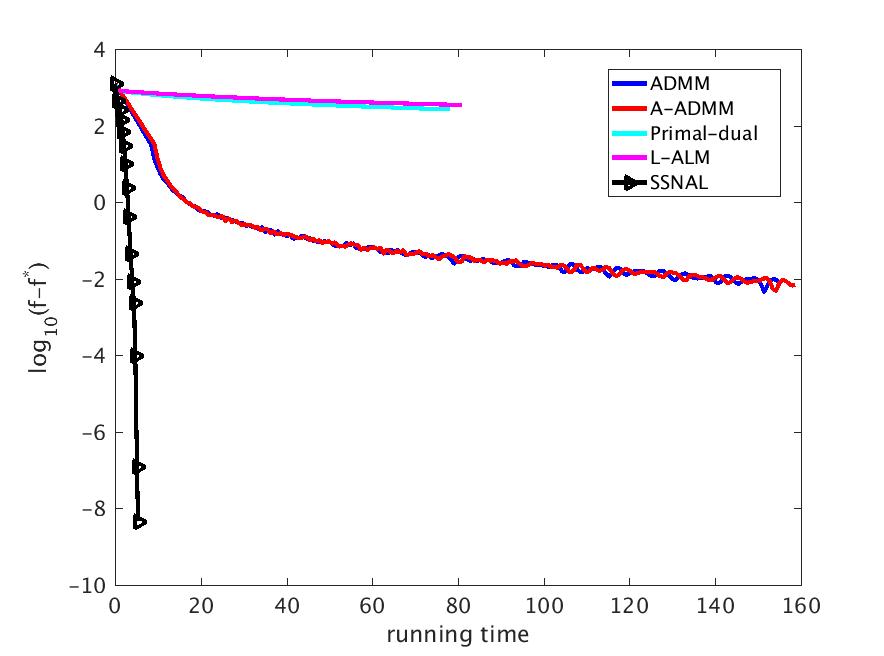}
	\end{minipage} \\	
	\begin{minipage}[t]{0.26\linewidth}
		\includegraphics[width=1.0\textwidth]{Figures//10000_1000_1_224.jpg}
	\end{minipage}
	\begin{minipage}[t]{0.26\linewidth}
		\includegraphics[width=1.0\textwidth]{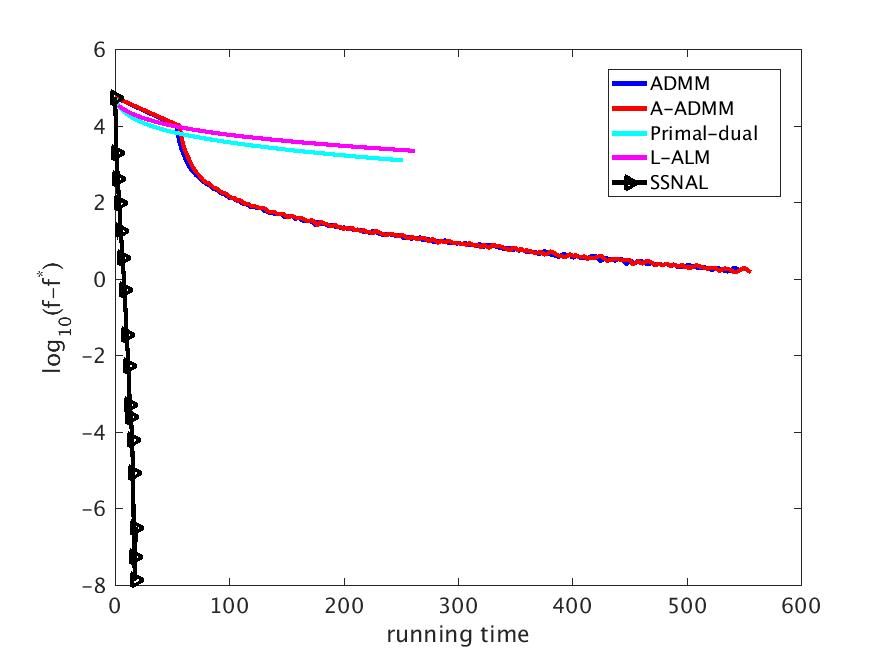}
	\end{minipage}
	\begin{minipage}[t]{0.26\linewidth}
		\includegraphics[width=1.0\textwidth]{Figures//10000_1000_1_2.jpg}
	\end{minipage}
	\caption{Constrained Lasso with sum to zero constraints,$\lambda_l=10^{-2},10^{-3},10^{-4}$. Top row is $m=500, n=5000$, bottom row is $m=1000, n=10000$.}	
\end{figure}

\newpage
\subsection{Randomized $B$}	
In this case, we generate $B\in\mathbb{R}^{s\times n}$ and $d\in\mathbb{R}^{s}$ randomly and set $s=30$. 
\begin{table}[!hbp] 
	\small
	\caption{\small Performance of SSNAL, primal dual method, linearized ALM, ADMM and A-ADMM with randomized generated constraints on synthetic data sets. $d$ is the sample size and $n$ is the dimension of each sample. $\lambda_l$ controls the penalty parameter in (P). 'nnz' denotes the number of nonzeros of the solution obtained by our algorithm. 'opt' is the optimal function value of (P). $\eta_{\text{gap}}$ is the optimal gap. 'a'= our algorithm, 'b' = primal dual method, 'c'= linearized alm, 'd' = ADMM, 'e' = A-ADMM. Running time counts in seconds. }
	\label{tab2}
	\begin{tabular}{|c|c|c|c|c|c|c|c|c|c|c|c|c|c|}
		\hline  & $\lambda_l$& nnz & opt &\multicolumn{5}{|c|}{ optimal gap, $\eta_{\text{gap}}$} &\multicolumn{5}{|c|}{ running time (iteration number) }   \\
		\hline size $m;n$ & & & & a& b& c& d& e&a&b&c&d&e \\
		\hline\multirow{3}{*}{200;2000} & $10^{-2}$&220&1.4046+4&2.1-8&3.6+0&1.9+2&5.8-2&5.3-2&\textbf{1.4($12|114$)}&7.9(-)&15(-)&23(-)&25(-)\\
		& $10^{-3}$&222&1.4203+3&6.7-9&4.2+1&2.3+2&1.3-3&1.4-3&\textbf{1.9($17|157$)}&8.1(-)&17(-)&24(-)&25(-)\\
		&$10^{-4}$&224&1.4218+2&9.9-9&3.4+1&6.8+1&3.0-4&3.0-4&\textbf{2.8($27|192$)}&8.7(-)&15(-)&16(-)&18(-)  \\
		\hline\multirow{3}{*}{300;3000} & $10^{-2}$&308&3.9596+4&1.8-6&1.1+1&3.9+2&4.0-1&4.0-1&\textbf{1.4($11|114$)}&12(-)&26(-)&43(-)&46(-)\\
		& $10^{-3}$&320&4.0082+3&6.1-9&9.7+1&5.4+2&2.5-2&2.4-2&\textbf{1.7($13|136$)}&14(-)&35(-)&43(-)&44(-)\\
		&$10^{-4}$&318&4.0131+2&1.1-8&8.2+1&1.8+2&1.1-3&1.0-3&\textbf{2.5($15|181$)}&14(-)&31(-)&43(-)&44(-)  \\
		\hline\multirow{3}{*}{500;5000} & $10^{-2}$&501&1.2547+5&1.37-6&2.9+1&1.1+3&1.1+3&1.0+3&\textbf{3.2($12|116$)}&76(-)&113(-)&180(-)&180(-)\\
		& $10^{-3}$&512&1.2713+4&6.4-9&2.8+2&1.6+3&2.4-1&2.4-1&\textbf{4.4($13|163$)}&77(-)&117(-)&179(-)&182(-) \\
		&$10^{-4}$&511&1.2729+3&2.0-8&2.5+2&5.6+2&4.9-3&4.6-3&\textbf{7.4($18|227$)}&79(-)&117(-)&180(-)&182(-)\\
		\hline
		\multirow{3}{*}{800;8000} & $10^{-2}$&776&3.1414+5&1.6-4&8.8+1&2.9+3&7.1+1&7.1+1&\textbf{6.5($10|110$)}&172(-)&205(-)&380(-)&385(-)\\
		& $10^{-3}$&800&3.1859+4&5.0-8&7.2+2&4.0+3&7.3-1&7.3-1&\textbf{11($14|175$)}&163(-)&217(-)&388(-)&394(-)\\
		&$10^{-4}$&799&3.1903+3&8.0-8&6.3+2&1.4+3&3.2-2&3.1-2&\textbf{16($18|251$)}&152(-)&227(-)&385(-)&387(-)  \\
		\hline
		\multirow{3}{*}{1000;10000} & $10^{-2}$&974&5.4457+5&1.0-5&1.3+2&4.7+3&2.9+2&2.9+2&\textbf{11($10|117$)}&246(-)&316(-)&602(-)&601(-)\\
		& $10^{-3}$&986&5.5229+4&1.2-6&1.2+3&6.8+3&1.3+0&1.2+0&\textbf{17($15|185$)}&252(-)&321(-)&610(-)&606(-)\\
		&$10^{-4}$&990&5.5307+3&5.2-9&1.1+3&2.5+3&6.8-2&6.8-2&\textbf{22($16|214$)}&258(-)&318(-)&596(-)&598(-)  \\
		\hline		
	\end{tabular}
\end{table}
\begin{figure}[!hbp]
	\centering 
	\begin{minipage}[t]{0.33\linewidth}
		\includegraphics[width=1.0\textwidth]{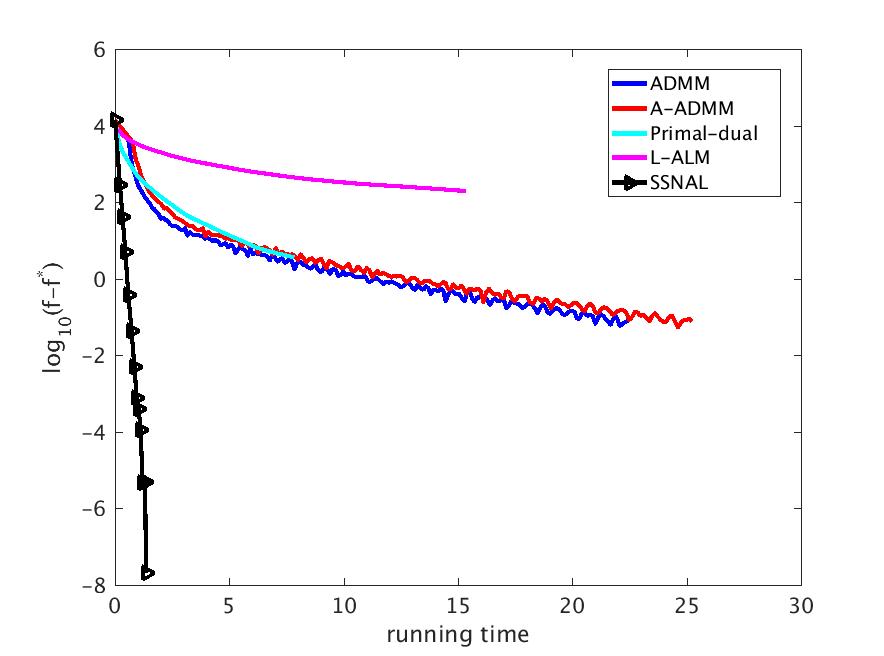}
	\end{minipage}
	\begin{minipage}[t]{0.33\linewidth}
		\includegraphics[width=1.0\textwidth]{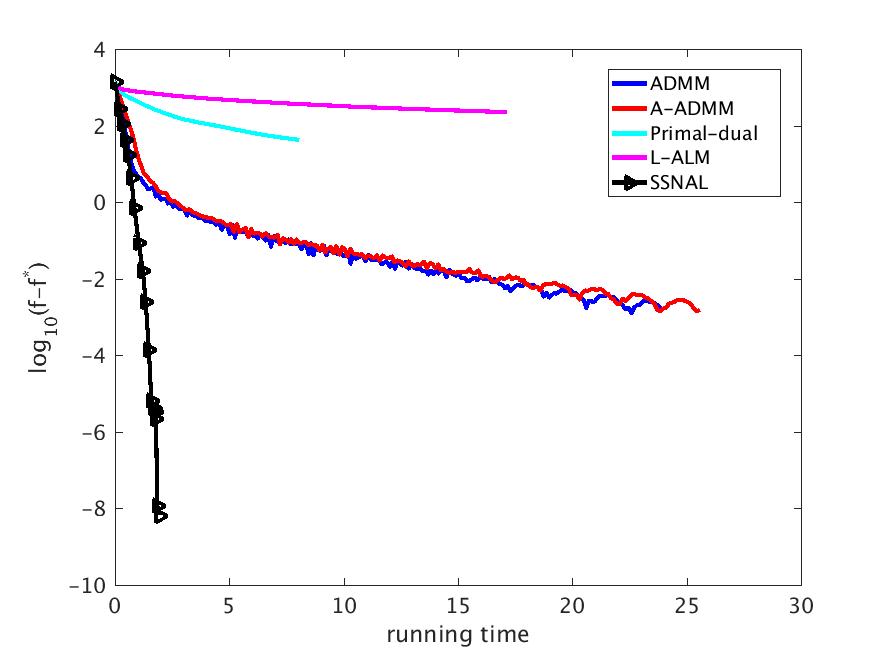}
	\end{minipage}
	\begin{minipage}[t]{0.33\linewidth}
		\includegraphics[width=1.0\textwidth]{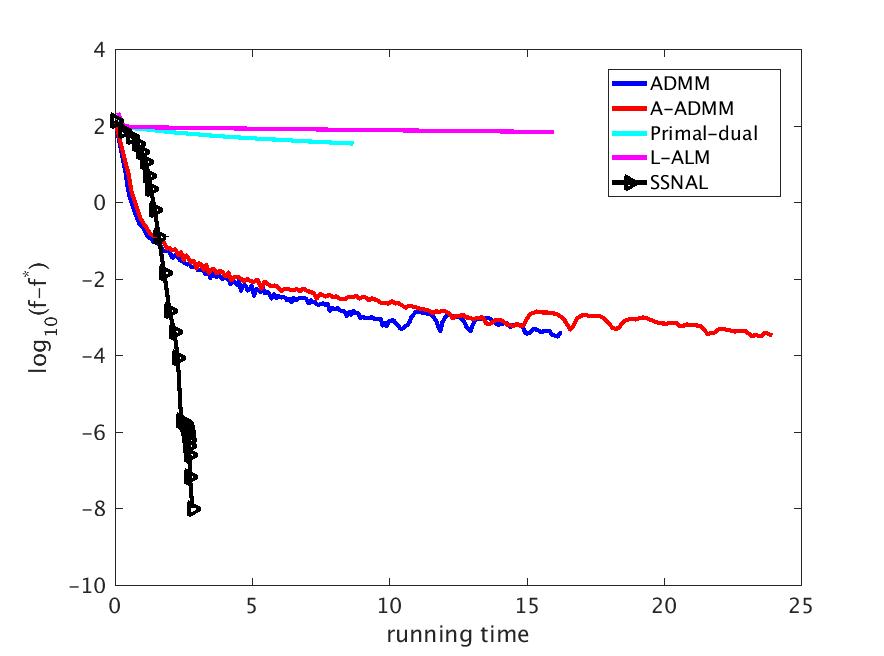}
	\end{minipage} \\
	\begin{minipage}[t]{0.33\linewidth}
		\includegraphics[width=1.0\textwidth]{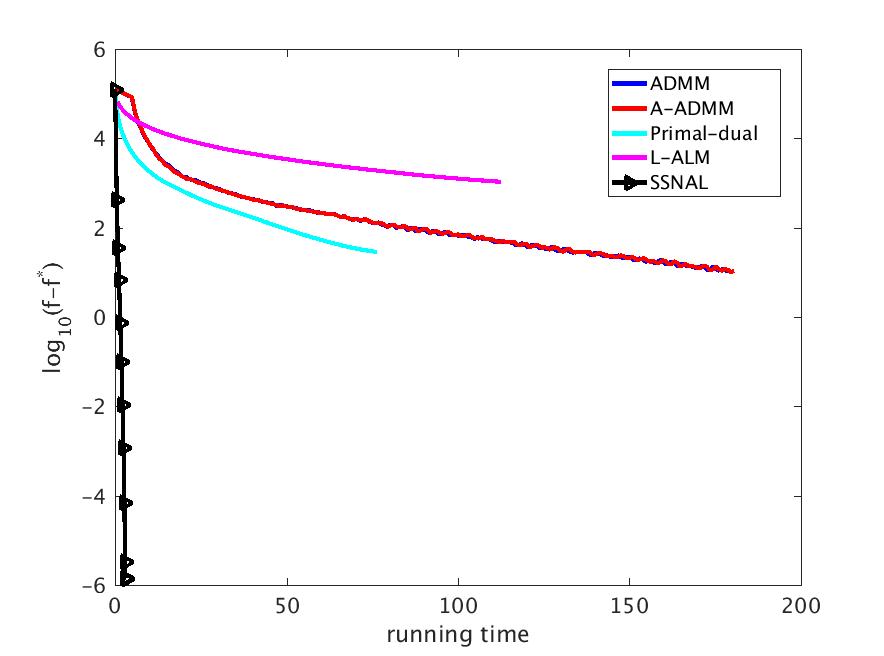}
	\end{minipage}
	\begin{minipage}[t]{0.33\linewidth}
		\includegraphics[width=1.0\textwidth]{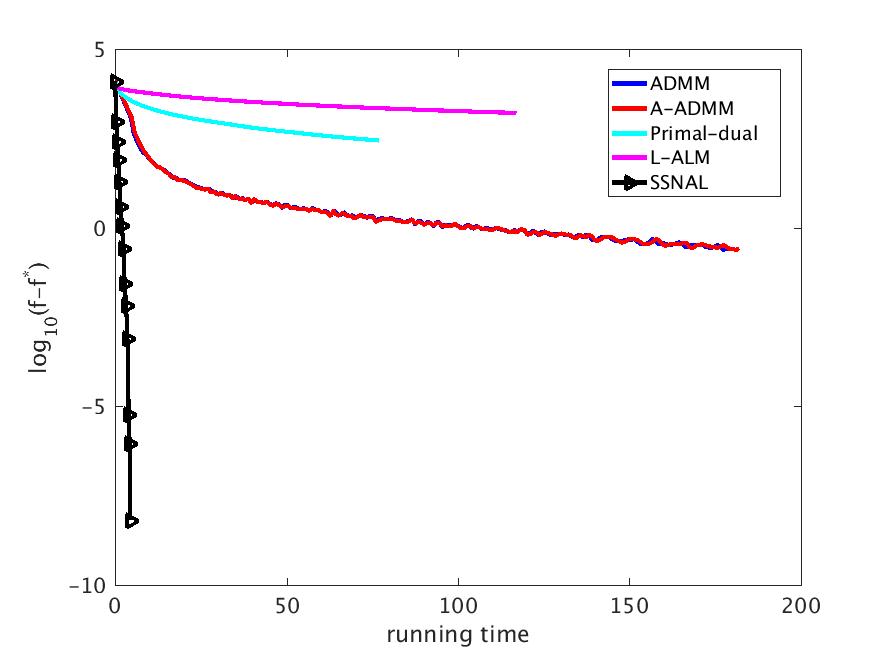}
	\end{minipage}
	\begin{minipage}[t]{0.33\linewidth}
		\includegraphics[width=1.0\textwidth]{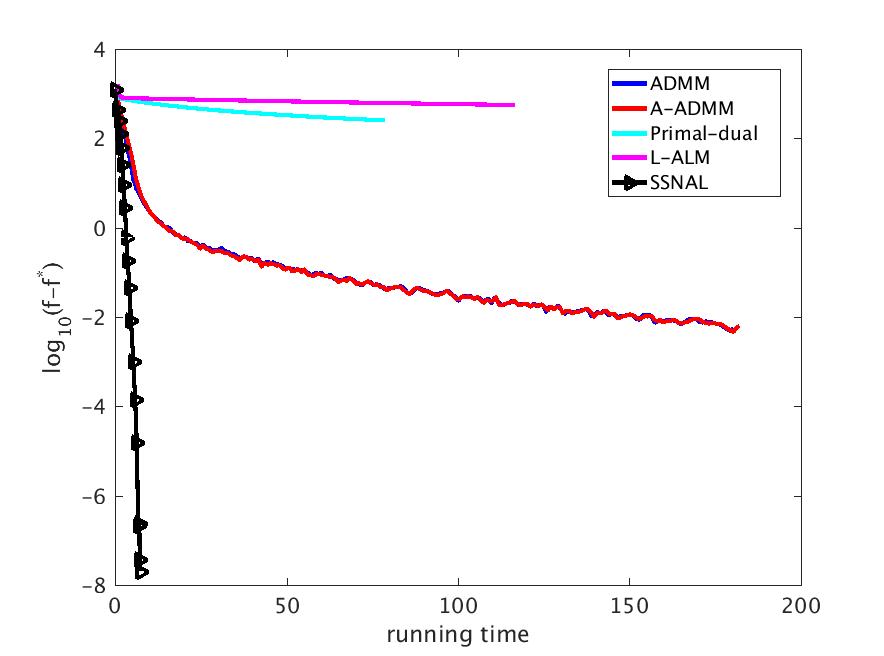}
	\end{minipage}
	\caption{Constrained Lasso with randomized generated constraints, $\lambda_l=10^{-2},10^{-3},10^{-4}$. Top row is $m=200, n=2000, s=30$ and bottom row is $m=500, n=5000, s=30$.}	
\end{figure}

\newpage
\subsection{Generalized Lasso}
In this scenario, we construct $D=\begin{bmatrix}
D_1 \\ D_2
\end{bmatrix}$, where $D_1$ is an $n\times n$ identity matrix and $D_2$ is an $s\times n$ random matrix. Moreover, we set $s=30$. 
\begin{table}[!hbp] 
	\small
	\caption{\small Performance of SSNAL, primal dual method, linearized ALM, ADMM and A-ADMM with generalized Lasso problem on synthetic data sets. $d$ is the sample size and $n$ is the dimension of each sample. $\lambda_l$ controls the penalty parameter in (P). 'nnz' denotes the number of nonzeros of the solution obtained by our algorithm. 'opt' is the optimal function value of (P). $\eta_{\text{gap}}$ is the optimal gap. 'a'= our algorithm, 'b' = primal dual method, 'c'= linearized alm, 'd' = ADMM, 'e' = A-ADMM. Running time counts in seconds. }
	\label{tab3}
	\begin{tabular}{|c|c|c|c|c|c|c|c|c|c|c|c|c|c|}
		\hline  & $\lambda_l$& nnz & opt &\multicolumn{5}{|c|}{ optimal gap, $\eta_{\text{gap}}$} &\multicolumn{5}{|c|}{ running time (iteration number) }   \\
		\hline size $m;n$ & & & & a& b& c& d& e&a&b&c&d&e \\
		\hline\multirow{3}{*}{200;2000} & $10^{-2}$&221&1.4371+4&1.3-6&8.5+0&7.8+0&5.8-2&6.6-2&\textbf{3.7($13|367$)}&8.7(-)&18(-)&25(-)&27(-)\\
		& $10^{-3}$&220&1.4530+3&4.3-8&4.3+1&4.3+1&1.5-3&1.6-3&\textbf{1.9($14|245$)}&8.5(-)&17(-)&23(-)&25(-)\\
		&$10^{-4}$&219&1.4546+2&2.2-8&3.1+1&3.1+1&5.6-4&5.6-4&\textbf{2.2($17|207$)}&10(-)&17(-)&24(-)&24(-)  \\
		\hline\multirow{3}{*}{300;3000} & $10^{-2}$&311&3.8919+4&2.6-6&1.2+1&1.1+1&7.7-1&7.8-1&\textbf{4.6($12|470$)}&14(-)&28(-)&43(-)&43(-)\\
		& $10^{-3}$&317&3.9406+3&4.0-8&8.3+1&8.3+1&1.6-2&1.6-2&\textbf{3.0($14|271$)}&13(-)&30(-)&43(-)&44(-)\\
		&$10^{-4}$&318&3.9455+2&3.9-9&8.0+1&8.1+1&1.0-3&1.0-3&\textbf{2.6($17|208$)}&13(-)&32(-)&41(-)&44(-)  \\
		\hline\multirow{3}{*}{500;5000} & $10^{-2}$&495&1.2179+5&6.8-6&5.1+1&4.7+1&1.2+1&1.1+1&\textbf{12($10|515$)}&74(-)&111(-)&176(-)&179(-)\\
		& $10^{-3}$&505&1.2339+4&6.4-8&3.2+2&3.2+2&7.6-2&8.0-2&\textbf{10($14|397$)}&77(-)&113(-)&180(-)&179(-) \\
		&$10^{-4}$&505&1.2355+3&4.2-8&2.5+2&2.5+2&3.5-3&3.2-3&\textbf{7.5($15|249$)}&85(-)&111(-)&177(-)&179(-)\\
		\hline
		\multirow{3}{*}{800;8000} & $10^{-2}$&779&3.1099+5&6.9-6&1.2+2&1.1+2&8.1+1&8.1+1&\textbf{27($10|525$)}&152(-)&200(-)&383(-)&387(-)\\
		& $10^{-3}$&789&3.1535+4&5.3-7&7.2+1&7.2+1&7.1-1&7.0-1&\textbf{24($14|415$)}&158(-)&216(-)&383(-)&386(-)\\
		&$10^{-4}$&795&3.1578+3&8.5-8&6.2+2&6.2+2&2.8-2&2.7-2&\textbf{17($15|254$)}&152(-)&210(-)&390(-)&392(-)  \\
		\hline
		\multirow{3}{*}{1000;10000} & $10^{-2}$&975&5.2804+5&1.9-5&1.5+2&1.5+2&3.1+2&3.1+2&\textbf{51($11|661$)}&254(-)&311(-)&589(-)&589(-)\\
		& $10^{-3}$&993&5.3519+4&8.2-8&1.2+3&1.2+3&2.7+0&2.8+0&\textbf{41($14|479$)}&234(-)&325(-)&597(-)&595(-)\\
		&$10^{-4}$&995&5.3591+3&7.0-8&1.1+3&1.1+3&7.1-2&7.3-2&\textbf{27($16|282$)}&250(-)&322(-)&589(-)&587(-)  \\
		\hline		
	\end{tabular}
\end{table}
\begin{figure}[!hbp] 
	\centering
	\begin{minipage}[t]{0.33\linewidth}
		\includegraphics[width=1.0\textwidth]{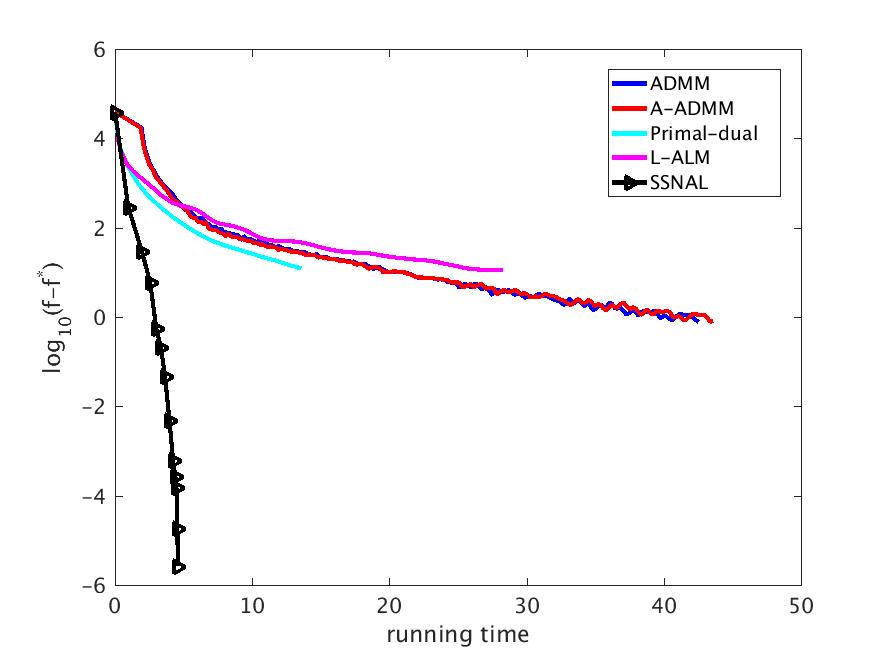}
	\end{minipage}
	\begin{minipage}[t]{0.33\linewidth}
		\includegraphics[width=1.0\textwidth]{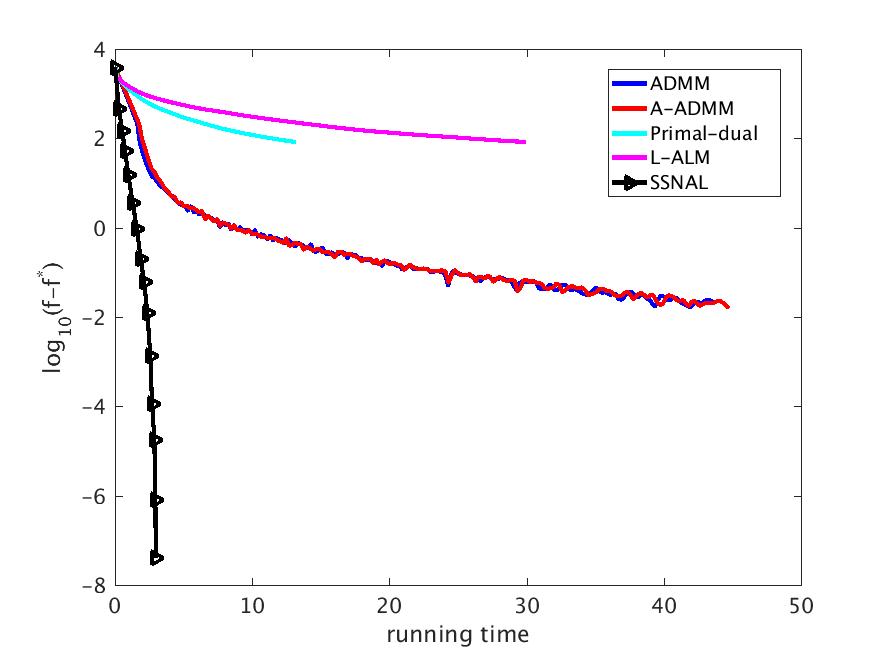}
	\end{minipage}
	\begin{minipage}[t]{0.33\linewidth}
		\includegraphics[width=1.0\textwidth]{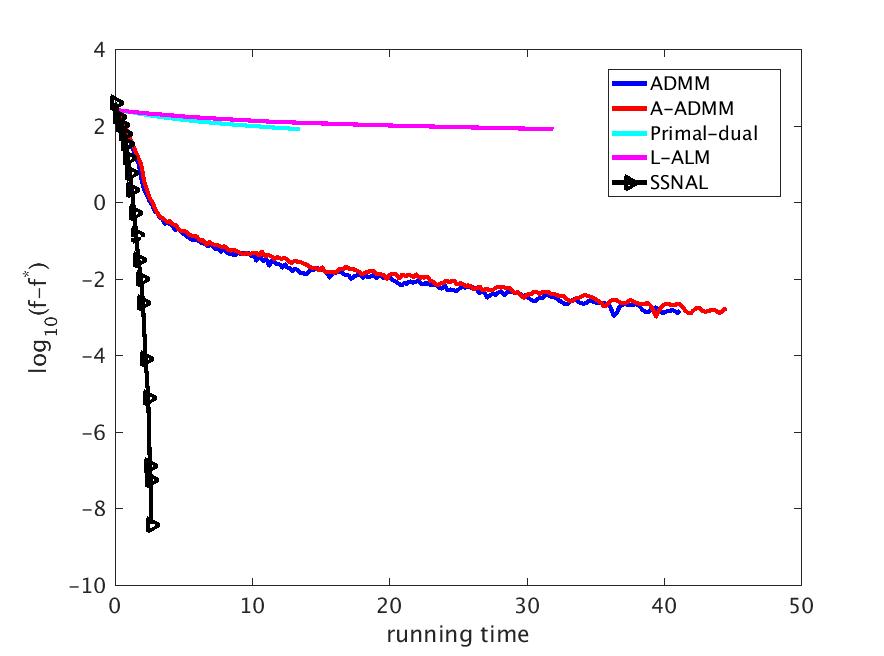}
	\end{minipage} \\
	\begin{minipage}[t]{0.33\linewidth}
		\includegraphics[width=1.0\textwidth]{Figures//8000_800_30_167.jpg}
	\end{minipage}
	\begin{minipage}[t]{0.33\linewidth}
		\includegraphics[width=1.0\textwidth]{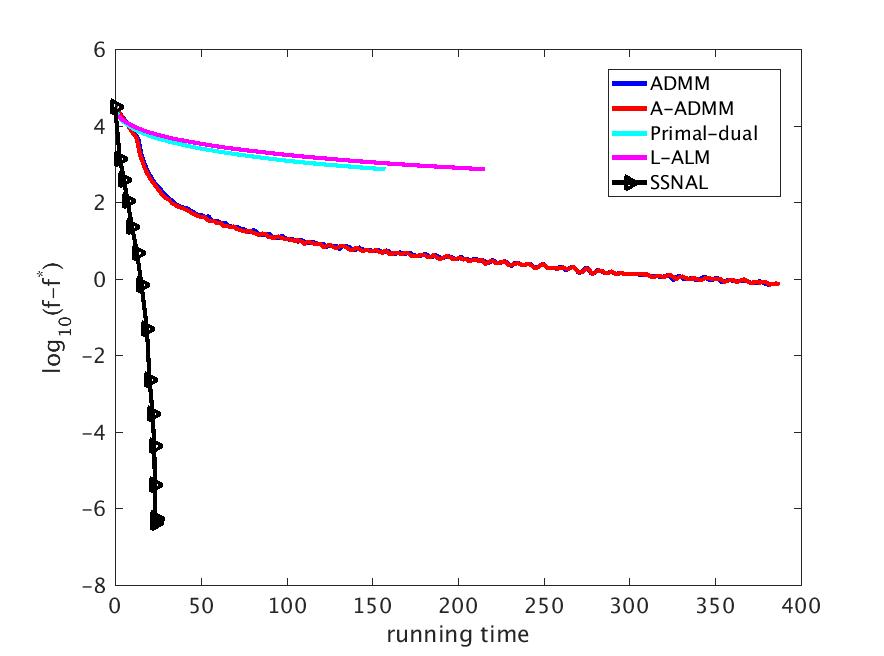}
	\end{minipage}
	\begin{minipage}[t]{0.33\linewidth}
		\includegraphics[width=1.0\textwidth]{Figures//8000_800_30_1.jpg}
	\end{minipage}
	\caption{Constrained Lasso with generalized lasso problem, $\lambda_l=10^{-2},10^{-3},10^{-4}$. Top row is $m=300, n=3000, s=30$ and bottom row is $m=800, n=8000, s=30$.}	
\end{figure}

\newpage
\section{Numerical Results on Real Data}
\subsection{Sum to zero constraint}
In this scenario, we set $B=e^{T}$ and $d=0$.
\begin{table}[!hbp] 
	\scriptsize
	\caption{\small Performance of SSNAL, primal dual method, linearized ALM, ADMM and A-ADMM with sum to zero constraints on UCI regression data sets. $d$ is the sample size and $n$ is the dimension of each sample. $\lambda_l$ controls the penalty parameter in (P). 'nnz' denotes the number of nonzeros of the solution obtained by our algorithm. 'opt' is the optimal function value of (P). $\eta_{\text{gap}}$ is the optimal gap. 'a'= our algorithm, 'b' = primal dual method, 'c'= linearized alm, 'd' = ADMM, 'e' = A-ADMM. Running time counts in seconds. }
	\label{tab4}
	\begin{tabular}{|c|c|c|c|c|c|c|c|c|c|c|c|c|c|}
		\hline  & $\lambda_l$& nnz & opt &\multicolumn{5}{|c|}{ optimal gap, $\eta_{\text{gap}}$} &\multicolumn{5}{|c|}{ running time (iteration number) }   \\
		\hline problem name  & & & & a& b& c& d& e&a&b&c&d&e \\
		$m;n$ &&&&\multicolumn{5}{|c|}{}&\multicolumn{5}{|c|}{}\\
		\hline
		abalone7&$10^{-3}$&23&1.1440+4&1.9-5&9.2+2&9.3+2&6.8+1&6.8+1&\textbf{21($12|96$)}&105(-)&115(-)&250(-)&247(-) \\
		4177;6435&$10^{-4}$&63&9.2897+3&1.3-7&8.6+2&8.7+2&1.8-4&1.8-4&\textbf{30($13|130$)}&104(-)&118(-)&165(6572)&169(6334)\\
		\hline
		bodyfat5&$10^{-3}$&39&5.3609-1&5.0-8&7.4-2&1.4-1&4.5-6&4.5-6&\textbf{3.6($18|94$)}&302(-)&296(-)&534(6892)&544(6837) \\
		252;11628&$10^{-4}$&79&5.5218-2&1.8-8&7.2-2&1.1-1&2.0-6&2.0-6&\textbf{4.3($19|116$)}&296(-)&298(-)&235(2389)&229(2168) \\
		\hline
		housing5&$10^{-3}$&113&2.8392+3&6.1-9&1.4+2&1.4+2&5.8-4&5.8-4&\textbf{5.0($12|106$)}&188(-)&185(-)&371(8558)&365(8368) \\
		506;8568&$10^{-4}$&216&1.0340+3&7.2-9&2.6+2&2.7+2&7.8-5&7.8-5&\textbf{6.7($15|139$)}&179(-)&189(-)&121(2179)&96(1554)\\
		\hline 
		mpg7 & $10^{-3}$&43&1.6769+3&1.1-7&1.2+1&1.9+1&2.1-4&2.1-4&\textbf{1.3($10|78$)}&35(-)&36(-)&19(3755)&18(3484)\\
		392;3432	& $10^{-4}$&132&8.9060+2&8.3-8&4.5+1&5.2+1&4.2-5&4.1-5&\textbf{2.2($14|120$)}&35(-)&40(-)&9.1(1583)&7.5(1014)\\
		\hline
		space\_ga9 & $10^{-3}$&14&3.1931+1&2.7-11&1.1-2&1.3-2&1.9-2&1.9-2&\textbf{6.5($8|48$)}&78(-)&89(-)&161(-)&167(-) \\
		3107;5005 &$10^{-4}$&36&1.9924+1&1.2-8&4.9-1&1.8+0&6.4-6&6.4-6&\textbf{11($11|82$)}&80(-)&97(-)&50(2597)&40(1949) \\
		\hline		
	\end{tabular}
\end{table}
\begin{figure}[!hbp]
	\centering
	\begin{minipage}[!hbp]{0.45\linewidth}
		\includegraphics[width=1.0\textwidth]{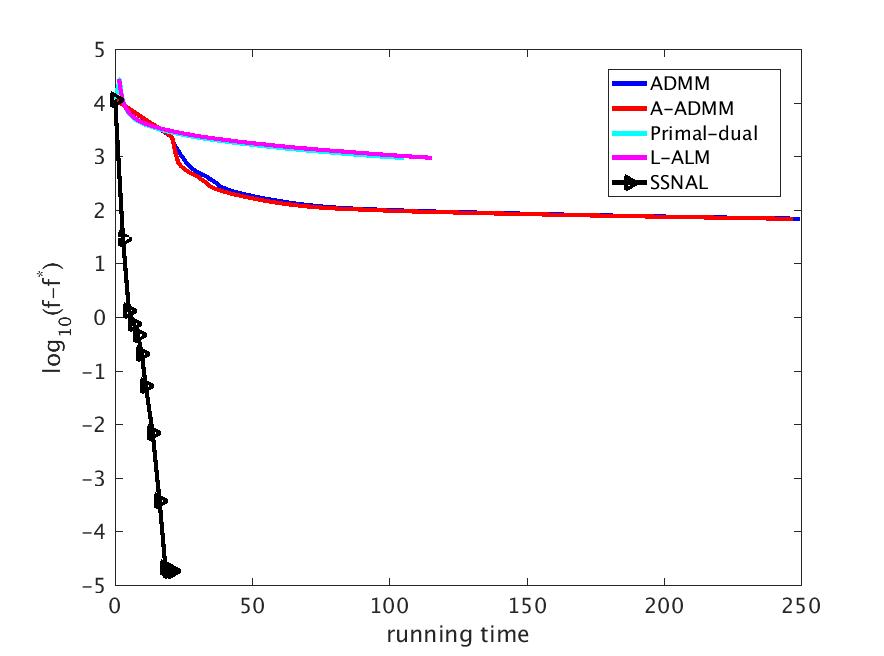}
	\end{minipage}
	\begin{minipage}[!hbp]{0.45\linewidth}
		\includegraphics[width=1.0\textwidth]{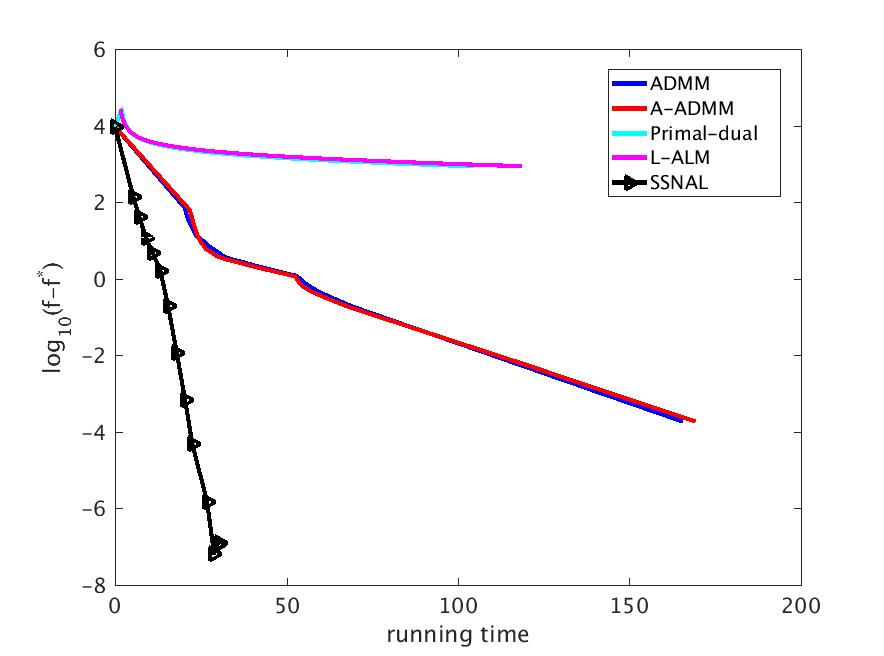}
	\end{minipage}\\
	\begin{minipage}[!hbp]{0.45\linewidth}
		\includegraphics[width=1.0\textwidth]{Figures//bodyfat_scale_expanded5_1_02.jpg}
	\end{minipage} 
	\begin{minipage}[!hbp]{0.45\linewidth}
		\includegraphics[width=1.0\textwidth]{Figures//bodyfat_scale_expanded5_1_0.jpg}
	\end{minipage}
	\caption{Constrained Lasso with sum to zero constraints, $\lambda_l=10^{-3},10^{-4}$. Top two figures are \textbf{abalone7} dataset; bottom two figures are \textbf{bodyfat5} dataset.}	
\end{figure}

\newpage
\subsection{Randomized $B$}
In this case, we generate $B\in\mathbb{R}^{s\times n}$ and $d\in\mathbb{R}^{s}$ randomly and set $s=30$.
\begin{table}[!hbp] 
	\scriptsize
	\caption{\small Performance of SSNAL, primal dual method, linearized ALM, ADMM and A-ADMM with randomized generated constraints on UCI regression data sets. $d$ is the sample size and $n$ is the dimension of each sample. $\lambda_l$ controls the penalty parameter in (P). 'nnz' denotes the number of nonzeros of the solution obtained by our algorithm. 'opt' is the optimal function value of (P). $\eta_{\text{gap}}$ is the optimal gap. 'a'= our algorithm, 'b' = primal dual method, 'c'= linearized alm, 'd' = ADMM, 'e' = A-ADMM. Running time counts in seconds. }
	\label{tab5}
	\begin{tabular}{|c|c|c|c|c|c|c|c|c|c|c|c|c|c|}
		\hline  & $\lambda_l$& nnz & opt &\multicolumn{5}{|c|}{ optimal gap, $\eta_{\text{gap}}$} &\multicolumn{5}{|c|}{ running time (iteration number) }   \\
		\hline problem name  & & & & a& b& c& d& e&a&b&c&d&e \\
		$m;n$ &&&&\multicolumn{5}{|c|}{}&\multicolumn{5}{|c|}{}\\
		\hline
		abalone7&$10^{-3}$& 39&1.2054+4&3.2-7&8.46+2&9.06+2&1.8+1&1.8+1&\textbf{19($8|83$)}&112(-)&163(-)&273(-)&276(-)\\
		4177;6435&$10^{-4}$&70&9.3489+3&1.8-7&8.5+2&9.0+2&2.6-4&2.6-4&\textbf{24($11|98$)}&111(-)&172(-)&149(5298)&142(4957)\\
		\hline
		bodyfat5&$10^{-3}$&49&9.9783-1&4.2-7&9.4-2&5.6-1&2.3-5&2.3-5&\textbf{3.7($12|75$)}&362(-)&394(-)&263(3146)&237(2789) \\
		252;11628&$10^{-4}$&64&1.0369-1&5.2-9&7.4-2&3.3-1&2.3-6&2.2-6&\textbf{4.5($13|97$)}&325(-)&395(-)&123(1294)&98(928) \\
		\hline
		housing5&$10^{-3}$&104&2.8589+3&1.7-8&1.4+2&1.9+2&6.5-4&6.5-4&\textbf{5.1($12|106$)}&200(-)&239(-)&423(9295)&420(9096) \\
		506;8568&$10^{-4}$&218&1.0403+3&2.0-7&2.6+2&3.1+2&8.0-5&8.0-5&\textbf{6.6($17|154$)}&195(-)&252(-)&217(4670)&214(4473)\\
		\hline 
		mpg7 & $10^{-3}$&66&1.8380+3&9.2-7&1.4+1&6.3+1&4.1-4&4.1-4&\textbf{2.0($11|86$)}&38(-)&62(-)&18(3007)&14(1800)\\
		392;3432	& $10^{-4}$&121&8.9682+2&2.8-9&4.5+1&9.4+1&4.2-5&4.1-5&\textbf{2.7($14|121$)}&43(-)&59(-)&13(1745)&8.3(1075)\\
		\hline
		space\_ga9 & $10^{-3}$&49&3.6325+1&3.6-8&2.9-3&1.8-1&1.1-4&1.1-4&\textbf{8.1($9|59$)}&85(-)&123(-)&165(8885)&166(8760) \\
		3107;5005 &$10^{-4}$&74&2.1154+1&5.6-8&4.4-1&4.8+0&9.7-6&8.6-6&\textbf{15($13|101$)}&80(-)&120(-)&24(1074)&21(885)\\
		\hline		
	\end{tabular}
\end{table}
\begin{figure}[!hbp]
	\centering 
	\begin{minipage}[!hbp]{0.45\linewidth}
		\includegraphics[width=1.0\textwidth]{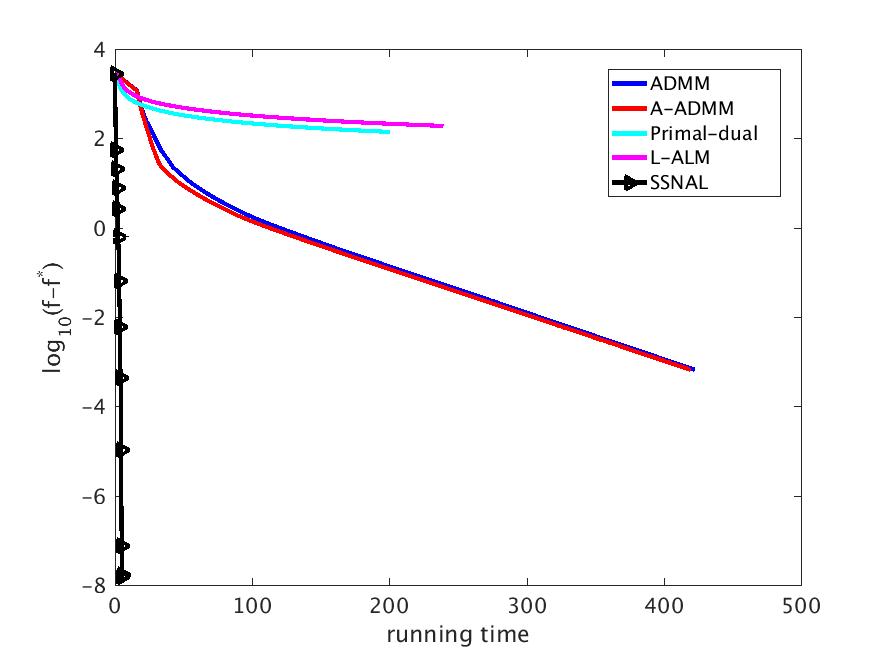}
	\end{minipage}
	\begin{minipage}[!hbp]{0.45\linewidth}
		\includegraphics[width=1.0\textwidth]{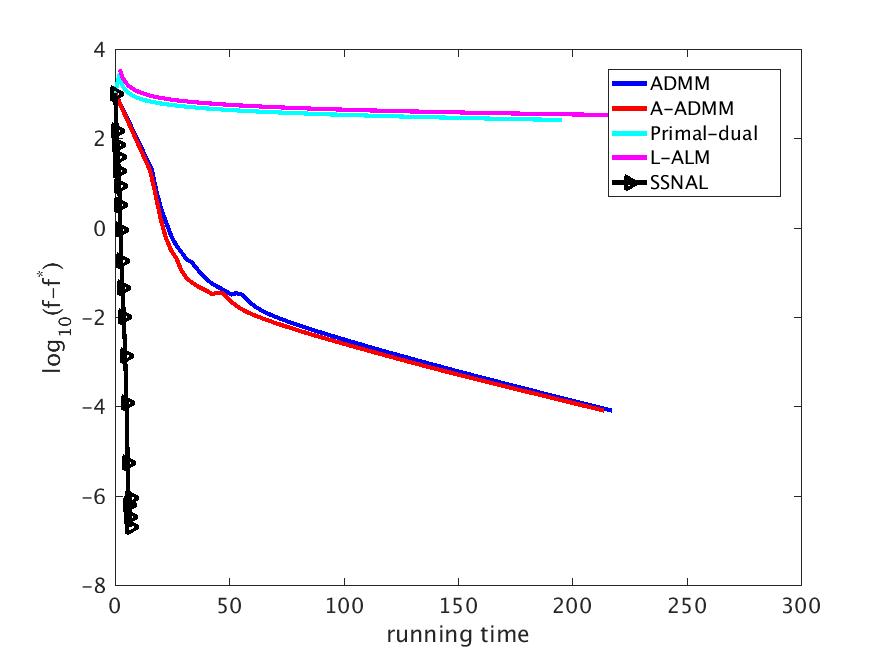}
	\end{minipage} \\
	\begin{minipage}[!hbp]{0.45\linewidth}
		\includegraphics[width=1.0\textwidth]{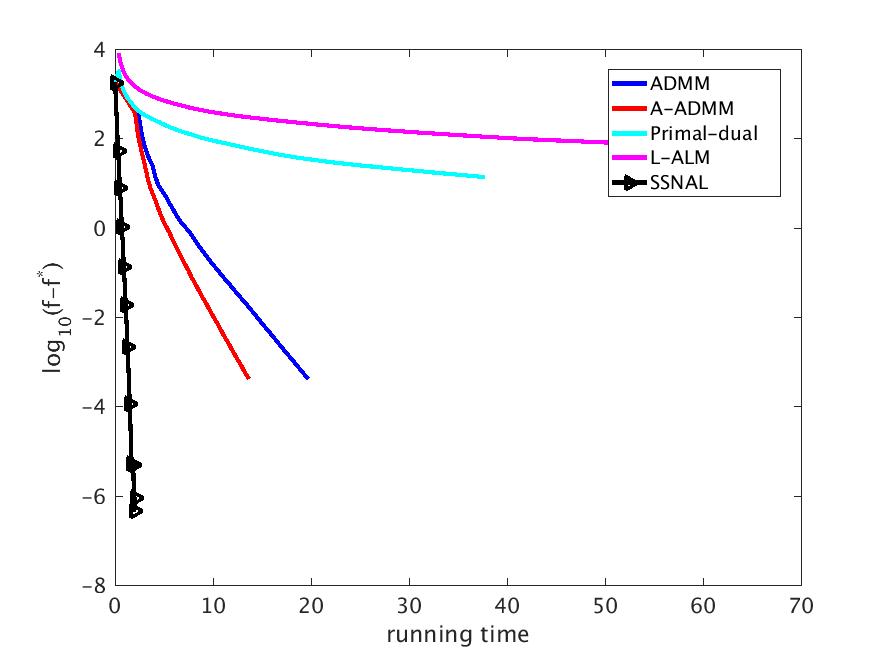}
	\end{minipage}
	\begin{minipage}[!hbp]{0.45\linewidth}
		\includegraphics[width=1.0\textwidth]{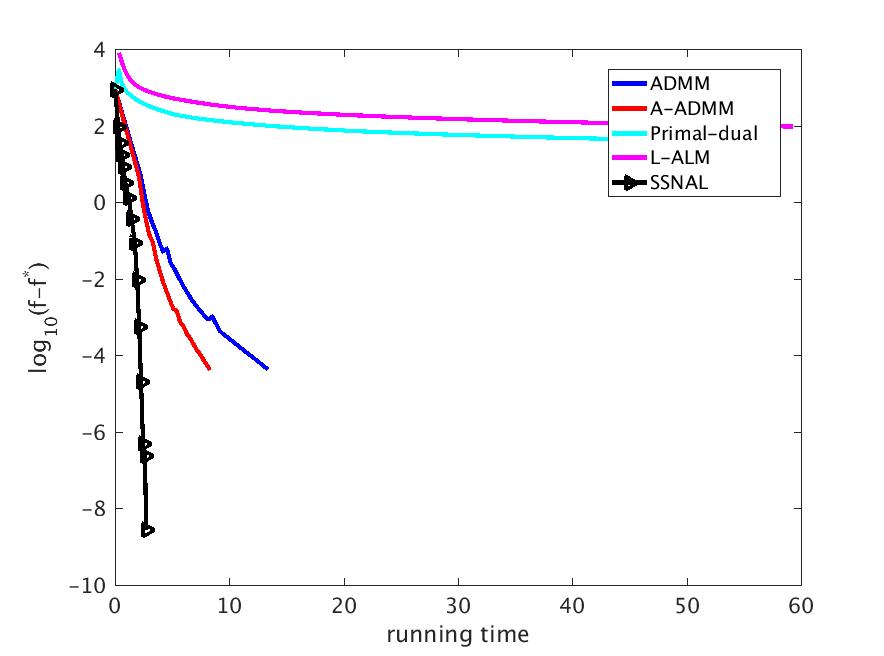}
	\end{minipage}
	\caption{Constrained Lasso with randomized generated constraints, $\lambda_l=10^{-3},10^{-4}$. Top two figures are \textbf{housing5} dataset; bottom two figures are \textbf{mpg7} dataset.}	
\end{figure}

\newpage
\subsection{Generalized Lasso}
In this scenario, we construct $D=\begin{bmatrix}
D_1 \\ D_2
\end{bmatrix}$, where $D_1$ is an $n\times n$ identity matrix and $D_2$ is an $s\times n$ random matrix. Moreover, we set $s=30$.
\begin{table}[!hbp] 
	\scriptsize
	\caption{\small Performance of SSNAL, primal dual method, linearized ALM, ADMM and A-ADMM with generalized Lasso problem on UCI regression data sets. $d$ is the sample size and $n$ is the dimension of each sample. $\lambda_l$ controls the penalty parameter in (P). 'nnz' denotes the number of nonzeros of the solution obtained by our algorithm. 'opt' is the optimal function value of (P). $\eta_{\text{gap}}$ is the optimal gap. 'a'= our algorithm, 'b' = primal dual method, 'c'= linearized alm, 'd' = ADMM, 'e' = A-ADMM. Running time counts in seconds. }
	\label{tab6}
	\begin{tabular}{|c|c|c|c|c|c|c|c|c|c|c|c|c|c|}
		\hline  & $\lambda_l$& nnz & opt &\multicolumn{5}{|c|}{ optimal gap, $\eta_{\text{gap}}$} &\multicolumn{5}{|c|}{ running time (iteration number) }   \\
		\hline problem name  & & & & a& b& c& d& e&a&b&c&d&e \\
		$m;n$ &&&&\multicolumn{5}{|c|}{}&\multicolumn{5}{|c|}{}\\
		\hline
		abalone7&$10^{-3}$&46&1.2013+4&1.1-5&8.2+2&8.2+2&2.4+1&2.4+1&\textbf{51($22|275$)}&127(-)&165(-)&267(-)&270(-) \\
		4177;6435&$10^{-4}$&78&9.3556+3&9.9-7&8.3+2&8.3+2&2.5-4&2.5-4&\textbf{56($12|288$)}&114(-)&176(-)&98(3386)&84(2879)\\
		\hline
		bodyfat5&$10^{-3}$&44&8.2521-1&1.5-7&8.3-2&7.6-2&4.5-5&4.1-5&\textbf{3.7($10|91$)}&321(-)&395(-)&772(-)&783(-) \\
		252;11628&$10^{-4}$&68&8.6951-2&1.9-7&7.2-2&7.2-2&2.3-6&1.7-6&\textbf{4.3($13|105$)}&309(-)&393(-)&121(1244)&93(907) \\
		\hline
		housing5&$10^{-3}$&105&2.8593+3&1.2-5&1.4+2&1.4+2&7.1-4&6.0-4&\textbf{11($12|254$)}&198(-)&246(-)&458(-)&463(9971) \\
		506;8568&$10^{-4}$&219&1.0453+3&2.8-10&2.5+2&2.5+2&8.0-5&8.0-5&7.7($15|178$)&194(-)&257(-)&153(3165)&146(2923)\\
		\hline 
		mpg7 & $10^{-3}$&61&1.8728+3&2.2-6&2.1+1&2.2+1&4.0-4&4.0-4&\textbf{5.9($10|300$)}&40(-)&47(-)&58(9378)&59(9198)\\
		392;3432	& $10^{-4}$&127&8.9665+2&1.7-8&4.6+1&4.6+1&4.2-5&3.7-5&\textbf{3.1($14|146$)}&37(-)&49(-)&9.2(1186)&6.7(767)\\
		\hline
		space\_ga9 & $10^{-3}$&47&3.5451+1&3.7-7&9.1-4&2.6+0&7.4-3&7.3-3&\textbf{15($9|142$)}&83(-)&116(-)&182(-)&180(-) \\
		3107;5005 &$10^{-4}$&78&2.1266+1&2.5-8&4.5-1&4.5-1&9.3-6&1.0-5&\textbf{15($13|125$)}&83(-)&111(-)&30(1431)&24(1132)\\
		\hline		
	\end{tabular}
\end{table}
\begin{figure}[!hbp] 
	\centering
	\begin{minipage}[!hbp]{0.45\linewidth}
		\includegraphics[width=1.0\textwidth]{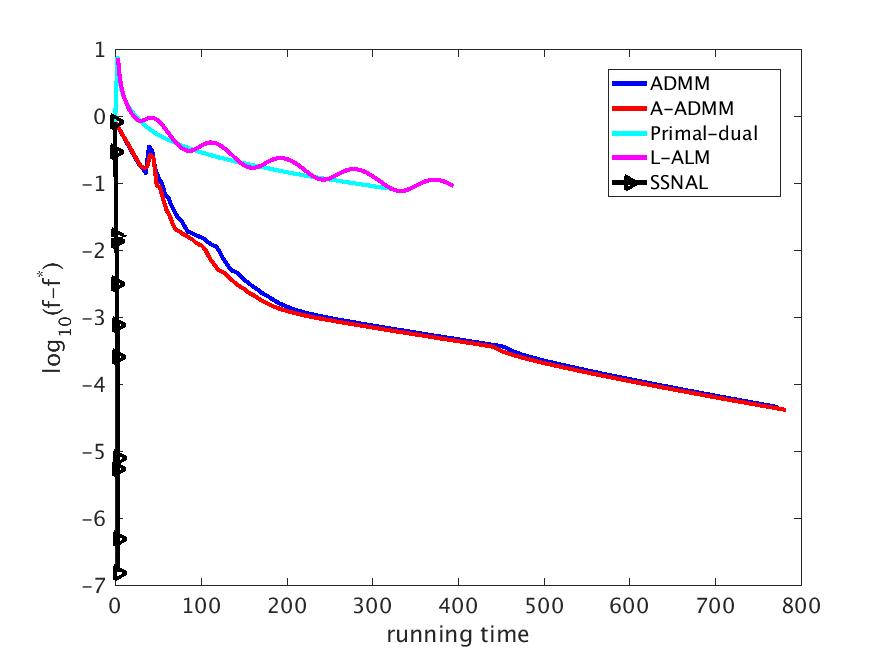}
	\end{minipage}
	\begin{minipage}[!hbp]{0.45\linewidth}
		\includegraphics[width=1.0\textwidth]{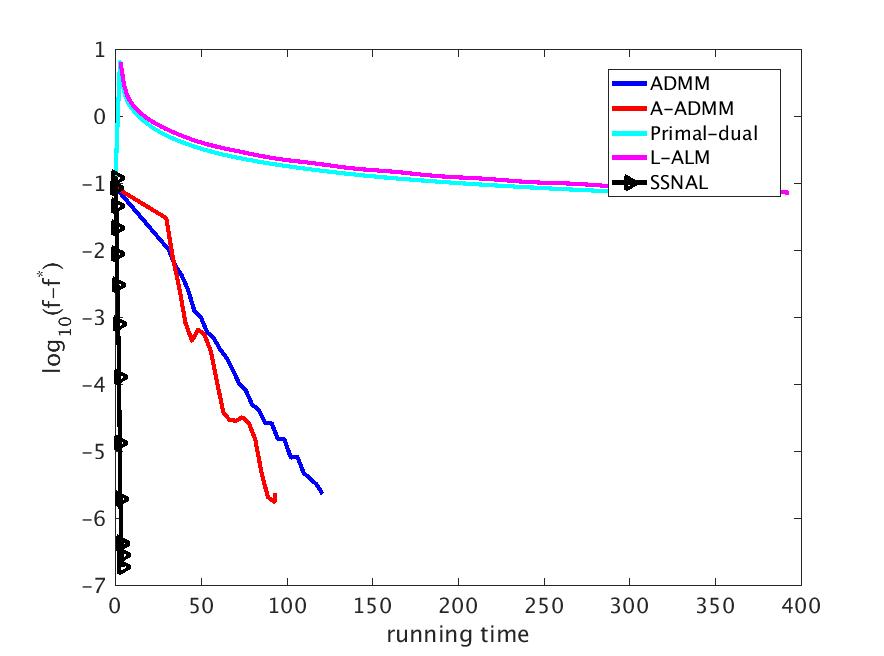}
	\end{minipage} \\
	\begin{minipage}[!hbp]{0.45\linewidth}
		\includegraphics[width=1.0\textwidth]{Figures//mpg_scale_expanded7_30_8.jpg}
	\end{minipage}
	\begin{minipage}[!hbp]{0.45\linewidth}
		\includegraphics[width=1.0\textwidth]{Figures//mpg_scale_expanded7_30_08.jpg}
	\end{minipage}
	\caption{Constrained Lasso with randomized generated constraints, $\lambda_l=10^{-3},10^{-4}$. Top two figures are \textbf{bodyfat5} dataset; bottom two figures are \textbf{mpg7} dataset.}	
\end{figure}

\end{document}